\newtheorem{theorem}{Theorem}[subsection]
\newtheorem{proposition}[theorem]{Proposition}
\newtheorem{corollary}[theorem]{Corollary}
\newtheorem{definition}[theorem]{Definition}
\newtheorem{lemma}[theorem]{Lemma}
\newtheorem{example}{Example}
\numberwithin{equation}{section}
\newcommand{\n}{\mathfrak n}
\newcommand{\z}{\mathfrak z}
\DeclareMathOperator{\diag}{diag}
\DeclareMathOperator{\End}{End}
\DeclareMathOperator{\C}{\mathcal{C}}
\DeclareMathOperator{\spn}{span}
\DeclareMathOperator{\la}{\langle}
\DeclareMathOperator{\ra}{\rangle}
\DeclareMathOperator{\Id}{Id}
\DeclareMathOperator{\Cl}{\rm{Cl}}
\DeclareMathOperator{\GL}{\rm{GL}}
\DeclareMathOperator{\SL}{\rm{SL}}
\DeclareMathOperator{\Aut}{\rm{Aut}}
\DeclareMathOperator{\Ad}{\rm{Ad}}
\DeclareMathOperator{\Pin}{Pin}
\DeclareMathOperator{\Spin}{Spin}
\DeclareMathOperator{\Orth}{O}
\DeclareMathOperator{\U}{U}
\DeclareMathOperator{\Sp}{Sp}
\DeclareMathOperator{\im}{Im}
\newcommand{\R}{\mathbb{R}}
\newcommand{\F}{\mathbb{F}}
\newcommand{\CC}{\mathbb{C}}
\newcommand{\HH}{\mathbb{H}}
\newcommand{\ii}{\mathbf{I}}
\newcommand{\jj}{\mathbf{J}}
\newcommand{\kk}{\mathbf{K}}
\DeclareMathOperator{\PP}{\mathbf{\Pi}}
\DeclareMathOperator{\QQ}{\mathbf{Q}}
\begin{document}
\title[Automorphism groups]{Automorphism groups of pseudo $H$-type algebras}

\author[Kenro Furutani, Irina Markina]  
{Kenro Furutani, Irina Markina}

\date{\today}
\thanks{
The first author was partially
supported by the Grant-in-aid for Scientific Research (C)
No. 17K05284, JSPS
and the National Center for Theoretical Science, National Taiwan University.
The second author was partially supported by 
the project Pure Mathematics in Norway, funded by the Trond Mohn Foundation}

\subjclass[2010]{Primary 17B60, 17B30, 17B70, 22E15}

\keywords{Clifford module, nilpotent 2-step Lie algebra, pseudo $H$-type algebras, Lie algebra automorphism, scalar product}

\address{K.~Furutani:  Department of Mathematics, Faculty of Science 
and Technology, Tokyo University of Science, 2641 Yamazaki, 
Noda, Chiba (278-8510), Japan}
\email{furutani\_kenro@ma.noda.tus.ac.jp}

\address{I.~Markina: Department of Mathematics, University of Bergen, P.O.~Box 7803,
Bergen N-5020, Norway}
\email{irina.markina@uib.no}

\begin{abstract}
In the present paper we determine the group of automorphisms of pseudo $H$-type Lie algebras, that are two step nilpotent Lie algebras closely related to the Clifford algebras $\Cl(\mathbb R^{r,s})$. 
\end{abstract}
\maketitle


\section{Introduction}

The pseudo $H$-type Lie algebras are two step nilpotent Lie algebras $\n_{r,s}(U)=(U\oplus\mathbb R^{r,s}, [.\,,.])$ endowed with a non-degenerate scalar product $\la.\,,.\ra_{U}+\la.\,,.\ra_{\R^{r,s}}$, where $U$ is the orthogonal complement to the centre $\mathbb R^{r,s}$ and the commutation relations are defined by 
$$
\la J_zu,v\ra_{U}=\la z,[u,v] \ra_{\R^{r,s}},\quad u,v\in U,\ z\in \R^{r,s}.
$$
Here $J_z\in\End (U)$, $J_z^2=-\la z,z\ra_{\R^{r,s}}\Id_{U}$ is the defining map for the representation $(J,U)$ of the Clifford algebra $\Cl(\R^{r,s})$. These Lie algebras are the natural generalisation of the $H$(eisenberg)-type algebras $\n_{r,0}(U)$, introduced in~\cite{Ka80,Kaplan81}, that are related to the Clifford algebras $\Cl(\R^{r,s})$ generated by a vector space endowed with the quadratic form of an arbitrary signature $(r,s)$.  The pseudo $H$-type Lie algebras were introduced in~\cite{Ci, GKM} and studied in~\cite{Ci97-1,Ci97-2,FM,FM1,FM2}. These type of algebras arise in study of parabolic subgroups with square integrable nilradicals~\cite{Wolf}, as maximal transitive prolongation of super Poincare algebras~~\cite{AC,AS1} and the nilpotent part of 2-gradings for semisimple Lie algebras~\cite{FGMMV,GKruglikovMV}. These algebras are some special examples of metric Lie algebras, studied in~\cite{AFMV,DeCoste,E,Eber03,Fischer}. The pseudo $H$-type Lie groups is a fruitful source for study of geometry with non-holonomic constrains or nilmanifolds~\cite{Barco, FigulaNagy, KOR}, symmetric spaces and harmonic spaces~\cite{Bell,BTV,CDKR,Pansu}, differential operators on Lie groups~\cite{Bauer,BieskeG,MullerSeeger,Ricci}. 

The main goal of the present paper is to describe the automorphism groups $\Aut(\n_{r,s}(U))$ of pseudo $H$-type algebras $\n_{r,s}(U)$ depending on the integer parameters $(r,s)$ and the structure of the representation $U$ of the Clifford algebra $\Cl(\R^{r,s})$. The automorphism groups preserving metric on $\n_{r,0}(U)$ were studied in~\cite{Riehm82,Riehm84} and the general automorphism groups of $\n_{r,0}(U)$ were described in~\cite{Barbano,KT,Saal,TamYosh}. Some attempt for study of $\Aut^0(\n_{0,1}(U))$ was done in~\cite{BarcoOvandoVittone}. An automorphism group $\Aut(\n_{r,s}(U))$ is decomposed into an abelian subgroup of dilatations, group ${\rm Hom}(U,\R^{r,s})$, the group generated by $\Pin(r,s)$, and a group $\Aut^0(\n_{r,s}(U))$ that acts trivially on the centre $\R^{r,s}$, see Section~\ref{GeneralAut}. The main goal is to determine the group $\mathbb A$ in terms of classical groups over $\R,\CC,\HH$, such that if $A\in \mathbb A\subset\GL(U)$, then
$A\oplus \Id\in \Aut^0(\n_{r,s}(U))$. 
The structure of the paper is the following. We recall the the necessary material about Clifford algebras and pseudo $H$-type Lie algebras in Sections 2 and 3. Section 5 is dedicated to the determination of the automorphism groups.The main result is contained in Table~\ref{tab:F-one}.


\section{Clifford algebras}



\subsection{Definition of Clifford algebras}


We denote by $\mathbb R^{r,s}$ the space $\mathbb R^{k}$, $r+s=m$, with
the non-degenerate quadratic form 
$
Q_{r,s}(z)=\sum_{i=1}^{r}z_{i}^{2}-\sum_{j=1}^{s}z_{r+j}^{2}$, $z\in \mathbb R^n
$ of the signature $(r,s)$.
The non-degenerate bi-linear form obtained from $Q_{r,s}$ by polarization
is denoted by $\la\cdot\,,\cdot\ra_{r,s}$. 
We call the form
$\la\cdot\,,\cdot\ra_{r,s}$ a {\it scalar product}. 
A vector $z\in\mathbb R^{r,s}$ is called
{\it positive} if $\la z,z\ra_{r,s}>0$,
{\it negative} if $\la z,z\ra_{r,s}<0$, and {\it null} 
if $\la z,z\ra_{r,s}=0$. 
We use the orthonormal basis
$\{z_1,\ldots,z_r,z_{r+1},\ldots,z_{r+s}\}$ 
for $\mathbb R^{r,s}$, where $\la z_i,z_i\ra_{r,s}=1$ for $i=1,\ldots,r$, and $\la z_j,z_j\ra_{r,s}=-1$ for $j=r+1,\ldots,r+s$. 

Let $\Cl_{r,s}$ be the real Clifford algebra 
generated by $\mathbb{R}^{r,s}$, that is the quotient of the tensor algebra 
$$
\mathcal{T}(\mathbb{R}^{r+s})=\mathbb{R}\oplus\left(\mathbb{R}^{r+s}\right)\oplus 
\left(\stackrel{2}\otimes\mathbb{R}^{r+s}\right)\oplus \left(\stackrel{3}\otimes\mathbb{R}^{r+s}\right)\oplus\cdots, 
$$
divided by the two-sided
ideal $I_{r,s}$ which is generated by the elements of the form
$z\otimes z+\la z,z\ra_{r,s}$, $z\in\mathbb{R}^{r+s}$.
The explicit determination of the Clifford algebras 
is given in~\cite{ABS} and they are isomorphic 
to matrix algebras 
$\mathbb{R}(n)$, 
$\mathbb{R}(n)\oplus\mathbb{R}(n)$, 
$\mathbb{C}(n)$, 
$\mathbb{H}(n)$ or 
$\mathbb{H}(n)\oplus\mathbb{H}(n)$ where 
the size $n$ is determined by $r$ and $s$, see~\cite{LawMich}. 

Given an algebra homomorphism $\widehat J\colon
\Cl_{r,s}\to\End(U)$, we call the space $U$ a Clifford module and
the operator $J_{\phi}$ a {\it Clifford action} or a {\it representation map}
of an element $\phi\in \Cl_{r,s}$. 
If there is a map
$$
\begin{array}{cccccc}
J\colon &\mathbb R^{r,s}&\to&\End(U)
\\
&z &\mapsto &J_z,
\end{array}
$$ 
satisfying $J^2_z=-\la z,z\ra_{r,s}\Id_{U}$ for an
arbitrary $z\in\mathbb R^{r,s}$, then $J$ can be uniquely extended to an algebra homomorphism 
$\widehat J$ by 
the universal property, see, for instance~\cite{Hu,Lam,LawMich}. We recommend to read~\cite{KobayashiYoshino} for wonderful introduction to the Clifford algebras $\Cl_{r,s}$.
Even though the representation matrices of the Clifford algebras
$\Cl_{r,s}$, and the Clifford modules $U$ are given over the fields $\mathbb{R}$, $\mathbb{C}$ or
$\mathbb{H}$, we refer to $\Cl_{r,s}$ as a real algebra and $U$ as a real vector space.

If $r-s\not\equiv 3 \,(\text{mod}~4)$, then $\Cl_{r,s}$ is a simple algebra. 
In this case there is only one
irreducible module $U=V_{irr}^{r,s}$ of dimension $n$. 
If $r-s\equiv 3 \,(\text{mod}~4)$, then the algebra $\Cl_{r,s}$ is not simple, 
and there are two non-equivalent irreducible modules.
They can be distinguished by the action of the ordered volume form 
$\Omega^{r,s}=\prod_{k=1}^{r+s}z_k$. In fact, the elements $\phi=\frac{1}{2}\big({\bf 1}\mp\Omega^{r,s}\big)$
act as an identity operator on the Clifford module, so 
$J_{\Omega^{r,s}}\equiv\pm \Id_{U}$. 
Thus we denote by $V^{r,s}_{irr;\pm}$ two non-equivalent irreducible Clifford modules 
on which the action of the volume form is given by $J_{\Omega^{r,s}}=\prod_{k=1}^{r+s}J_{z_k}\equiv\pm \Id$.

\begin{proposition}\cite[Proposition 4.5]{LawMich}\label{prop:reducibility}
Clifford modules are completely reducible; any Clifford module $U$ can be
decomposed into irreducible modules:
\begin{equation}\label{eq:isotypic}
U=
\begin{cases} 
\stackrel{p}\oplus V_{irr}^{r,s},\quad&\text{if}\quad r-s\not\equiv 3\,($\text{\em mod}$~4),
\\
\big(\stackrel{p_+}\oplus V_{irr;+}^{r,s}\big)\oplus
\big(\stackrel{p_-}\oplus V_{irr;-}^{r,s}\big),
\quad&\text{if}\quad r-s\equiv 3 \,($\text{\em mod}$~4).
\end{cases}
\end{equation}
The numbers $p$, $p_+,p_-$ are uniquely determined by the dimension of $U$.
\end{proposition}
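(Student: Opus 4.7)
The plan is to exploit the explicit identification of $\Cl_{r,s}$ with a matrix algebra $\R(n)$, $\R(n)\oplus\R(n)$, $\CC(n)$, $\HH(n)$, or $\HH(n)\oplus\HH(n)$ recalled above. Each of these is a semisimple $\R$-algebra, so $\Cl_{r,s}$ itself is semisimple, and a standard argument from module theory yields that every finite-dimensional $\Cl_{r,s}$-module decomposes as a direct sum of irreducible submodules. For a self-contained argument I would instead build a positive definite product on $U$ with respect to which the generators $J_{z_i}$ corresponding to the orthonormal basis of $\R^{r,s}$ are skew-symmetric; this is obtained by averaging any positive definite product over the finite subgroup of $\GL(U)$ generated by $\{J_{z_1},\dots,J_{z_{r+s}}\}$. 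With respect to such a form the orthogonal complement of any $\Cl_{r,s}$-submodule is again a submodule, so induction on $\dim U$ produces a decomposition into irreducibles.

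Next I would identify which irreducibles can occur. If $r-s\not\equiv 3\pmod 4$, then $\Cl_{r,s}\cong \F(n)$ is simple with $\F\in\{\R,\CC,\HH\}$, and up to isomorphism there is a unique irreducible $\Cl_{r,s}$-module $V_{irr}^{r,s}$; the decomposition of $U$ is then forced to be $\stackrel{p}\oplus V_{irr}^{r,s}$. If $r-s\equiv 3\pmod 4$, then the ordered volume form $\Omega^{r,s}$ is central in $\Cl_{r,s}$ and satisfies $(\Omega^{r,s})^2={\bf 1}$, so $\phi_\pm=\tfrac{1}{2}({\bf 1}\pm\Omega^{r,s})$ are central orthogonal idempotents splitting $\Cl_{r,s}$ into two simple two-sided ideals $\Cl_{r,s}\phi_\pm$. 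Each summand admits a unique irreducible module, which, viewed as a $\Cl_{r,s}$-module, is $V_{irr;\pm}^{r,s}$ with $J_{\Omega^{r,s}}\equiv\pm\Id$, and the two are non-isomorphic precisely because the central element $\Omega^{r,s}$ acts on them by different scalars. Grouping the irreducible summands of $U$ by the eigenvalue of $J_{\Omega^{r,s}}$ then produces the second case of \eqref{eq:isotypic}.

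Uniqueness of the multiplicities follows by dimension counting: in the simple case $p=\dim U/\dim V_{irr}^{r,s}$, while in the non-simple case $p_\pm$ equal the dimensions of the $\pm 1$ eigenspaces of the central involution $J_{\Omega^{r,s}}$ divided by $\dim V_{irr;\pm}^{r,s}$. I expect the main obstacle to be the signature bookkeeping needed to verify the two key facts about $\Omega^{r,s}$ when $r-s\equiv 3\pmod 4$, namely centrality (equivalent to $r+s$ being odd) and $(\Omega^{r,s})^2=+{\bf 1}$, which requires combining the factor $(-1)^{(r+s)(r+s-1)/2}$ that arises from anti-commuting the $z_i$ past each other with the signature-dependent signs coming from $z_i^2=\mp{\bf 1}$.
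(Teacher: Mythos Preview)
The paper does not supply a proof of this proposition at all: it is stated with a citation to \cite[Proposition~4.5]{LawMich} and then used as background. So there is nothing in the paper to compare your argument against line by line.

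That said, your sketch is correct and is essentially the standard proof one finds in the cited reference. The averaging construction works because the subgroup of $\GL(U)$ generated by $J_{z_1},\dots,J_{z_{r+s}}$ is indeed finite (the relations $J_{z_i}^2=\pm\Id$ and $J_{z_i}J_{z_j}=-J_{z_j}J_{z_i}$ force order at most $2^{r+s+1}$), and the rest---classification of irreducibles via the simple/non-simple dichotomy and the central idempotents $\tfrac{1}{2}({\bf 1}\pm\Omega^{r,s})$---is exactly the argument in Lawson--Michelsohn. Your bookkeeping concern about $(\Omega^{r,s})^2={\bf 1}$ when $r-s\equiv 3\pmod 4$ is real but routine; the paper in fact records this computation later as~\eqref{eq:Omega2}.

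One small point worth flagging: the statement's final sentence, that $p_+,p_-$ are ``uniquely determined by the dimension of $U$'', is literally false in the non-simple case since $\dim V_{irr;+}^{r,s}=\dim V_{irr;-}^{r,s}$ and only the sum $p_++p_-$ is fixed by $\dim U$. Your proof correctly identifies what actually determines $p_\pm$, namely the dimensions of the $\pm 1$-eigenspaces of $J_{\Omega^{r,s}}$, so you have in fact proved the right (slightly stronger) uniqueness statement rather than the one written.
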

The module $U= \stackrel{p}\oplus V_{irr}^{r,s}$ is called {\it isotypic} and the second one in~\eqref{eq:isotypic} is non-isotypic.
The Clifford algebras possess  the periodicity properties:
\begin{equation}\label{eq:Bott_periodicity}
\Cl_{r,s}\otimes \Cl_{0,8}\cong \Cl_{r,s+8},\quad
\Cl_{r,s}\otimes \Cl_{8,0}\cong \Cl_{r+8,s},\quad
\Cl_{r,s}\otimes \Cl_{4,4}\cong \Cl_{r+4,s+4},
\end{equation}
where the last one follows from $\Cl_{r,s}\otimes \Cl_{1,1}\cong
\Cl_{r+1,s+1}$, see~\cite{ABS}.
The Clifford algebras
$\Cl_{\mu,\nu}$ for $(\mu,\nu)\in\{(8,0),(0,8),(4,4)\}$
are isomorphic to $\mathbb{R}(16)$. 


\subsection{Admissible modules}


\begin{definition}\cite{Ci} A module $U$ of the Clifford algebra $\Cl_{r,s}$ 
is called admissible if there is a scalar product $\la\cdot\,,\cdot\ra_{U}$ on
$U$ such that
\begin{equation}\label{admissibility condition 1}
\la J_{z}x,y\ra_{U}+\la x,J_{z}y\ra_U=0,\quad\text{for all}\  \ x,y\in U\ \text{and}\  z\in \mathbb{R}^{r,s}.
\end{equation}
\end{definition}
We write $(U,\la \cdot\,,\cdot\ra_{U})$ for
an admissible module to emphasise the scalar product $\la\cdot\,,\cdot\ra_{U}$
and call it an {\it admissible scalar product}. 
We collect properties of admissible modules in several propositions.
\begin{proposition}\label{properties of admissible module} Let $\Cl_{r,s}$ be the Clifford algebra generated by the space $\mathbb R^{r,s}$.
\begin{itemize}
\item[(1)] {If $\la
\cdot\,,\cdot\ra_{U}$ 
is an admissible scalar product for $\Cl_{r,s}$, then $K\la \cdot\,,\cdot\ra_{U}$ 
is also admissible for any constant $K\neq 0$. We can assume that $K=\pm 1$ by normalisation of the scalar products.
}
\item[(2)] {Let $(U,\la\cdot\,,\cdot\ra_U)$ be an admissible module for $\Cl_{r,s}$ and let $(U_1,\la\cdot\,,\cdot\ra_{U_1})$ be such that $U_1$ is a submodule of $U$ and $\la\cdot\,,\cdot\ra_{U_1}$ is a non-degenerate
restriction of $\la\cdot\,,\cdot\ra_U$ to $U_{1}$. Then the orthogonal complement
${U_{1}}^{\perp}=\{x\in U\mid\ \la x,y\ra_{U}=0,\ \text{for all}\  y\in U_{1}\}$
with the scalar product obtained by the restriction of $\la\cdot\,,\cdot\ra_U$ to ${U_{1}}^{\perp}$ 
is also an admissible module.
}
\item[(3)] {Condition~\eqref{admissibility condition 1} and the
property $J^{\,2}_{z}=-\la z,z\ra_{r,s}\Id_U$ imply
\begin{equation}\label{admissibility condition 2}
\la J_{z}x,J_{z}y\ra_{U}=\la z,z\ra_{r,s}\la x,y\ra_{U}.
\end{equation}
}
\item[(4)] {If $s>0$, then any admissible module $(U,\la \cdot\,,\cdot\ra_{U})$ of $\Cl_{r,s}$
is {\it neutral}, i.e., $\dim U=2l$, $l\in\mathbb N$, and $U$ it is isometric to
$\mathbb{R}^{l,l}$, see~{\em \cite[Proposition 2.2]{Ci}}.
}
\item[(5)] {If $s=0$, then any Clifford module of $\Cl_{r,0}$
    can be made into admissible with positive definite or negative
    definite scalar product, see~\cite[Theorem 2.4]{Hu}.
}
\end{itemize}
\end{proposition}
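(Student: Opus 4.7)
The plan is to treat each of the five items separately using the defining admissibility identity~\eqref{admissibility condition 1}. For (1), I would observe that both terms in~\eqref{admissibility condition 1} are linear in the scalar product, so multiplying by any $K\neq 0$ preserves admissibility; the normalisation to $K=\pm 1$ then amounts to an appropriate choice of the scaling factor (for example rescaling so that a chosen non-null vector has norm $\pm 1$). Item (3) I would derive by substituting $J_z y$ for $y$ in~\eqref{admissibility condition 1}, which gives $\la J_z x, J_z y\ra_U = -\la x, J_z^2 y\ra_U = \la z,z\ra_{r,s}\la x, y\ra_U$ after invoking the Clifford relation $J_z^2=-\la z,z\ra_{r,s}\Id_U$.

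For (2), the first step is to verify that $U_1^\perp$ is a $\Cl_{r,s}$-submodule. Given $x\in U_1^\perp$ and $y\in U_1$, the submodule property of $U_1$ gives $J_z y\in U_1$, whence admissibility on $U$ yields $\la J_z x, y\ra_U = -\la x, J_z y\ra_U = 0$; therefore $J_z x\in U_1^\perp$. The assumed non-degeneracy of the restriction of $\la\cdot,\cdot\ra_U$ to $U_1$ then forces $U = U_1\oplus U_1^\perp$ as an orthogonal direct sum, so the restriction to $U_1^\perp$ is non-degenerate, and admissibility on $U_1^\perp$ is inherited directly from the identity on $U$.

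For (4) and (5) the statements are cited from~\cite{Ci} and~\cite{Hu}, but I would sketch the essential arguments. For (4), since $s>0$ I would pick a negative unit vector $z\in\R^{r,s}$, so that $J_z^2=\Id_U$; the decomposition $U = U_+\oplus U_-$ into the $(\pm 1)$-eigenspaces of $J_z$ is then immediate. Applying (3) gives $\la J_z x, J_z y\ra_U = -\la x, y\ra_U$, and restricting to $U_+$ yields $\la x, y\ra_U = -\la x, y\ra_U$, so $U_+$ is totally isotropic; the same argument applies to $U_-$. Non-degeneracy of $\la\cdot,\cdot\ra_U$ on the whole $U$ then forces $\dim U_+ = \dim U_-$, giving neutral signature. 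For (5), I would start with any positive definite inner product on $U$ and average it over the finite subgroup of $\GL(U)$ generated by $\pm J_{z_1},\ldots,\pm J_{z_r}$, producing a positive definite form under which each $J_{z_i}$ is orthogonal; combined with $J_{z_i}^2=-\Id_U$, this is equivalent to $J_{z_i}$ being skew-symmetric, which is precisely the admissibility condition.

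The principal difficulty lies in (4): while the building blocks are short, one must combine the eigenspace decomposition at a negative direction with the pseudo-isometry identity from (3) and then invoke non-degeneracy to balance the dimensions of the two isotropic eigenspaces, thereby pinning down the neutral signature. The remaining items (1), (2), (3), and (5) reduce either to direct algebraic manipulation of~\eqref{admissibility condition 1} or to the standard averaging trick for finite groups.
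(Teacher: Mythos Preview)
Your proposal is correct. The paper itself does not supply a proof of this proposition: items (1)--(3) are stated as elementary observations without argument, and items (4) and (5) are deferred to the cited references \cite{Ci} and \cite{Hu}. Your arguments fill in exactly the details one would expect---the linearity check for (1), the substitution $y\mapsto J_z y$ for (3), the submodule-and-complement argument for (2), the isotropic eigenspace decomposition at a negative generator for (4), and the finite-group averaging for (5)---and all of them are sound. In particular, your treatment of (4) is the standard one underlying the cited result in \cite{Ci}, and the averaging in (5) is precisely the construction behind \cite[Theorem 2.4]{Hu}; so while the paper outsources these, your sketches match the intended proofs.
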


Proposition~\ref{r-s = 0,1,2 mod 4} describes the relation between irreducible and admissible modules. An admissible module of the minimal possible dimension is called a {\it minimal admissible module}. 

\begin{proposition}\label{r-s = 0,1,2 mod 4}\cite[Theorem 3.1]{Ci}\cite[Proposition 1]{FM1}
Let $\Cl_{r,s}$ be the Clifford algebra generated by the space $\mathbb R^{r,s}$.
\begin{itemize}
\item[ (1)] {If $s=0$, then any irreducible Clifford module is minimal
admissible with respect to a positive definite or a 
negative definite scalar product. 
}
\item[(2)] {If $r-s\equiv 0,1,2\mod 4$, $s>0$,
then a unique irreducible module $V^{r,s}_{irr}$ is not necessary admissible. The following situations are possible: 
\begin{itemize}
\item[(2-1)] {The irreducible module $V^{r,s}_{irr}$ is minimal admissible or,}
\item[(2-2)] {The irreducible module $V^{r,s}_{irr}$ is not admissible, but the
direct sum $V^{r,s}_{irr}\oplus V^{r,s}_{irr}$ is minimal admissible.}
\end{itemize}
}
\item[(3)] {
If $r-s\equiv 3 \mod 4$, $s>0$, then for two non-equivalent irreducible modules $V_{irr;\pm}^{r,s}$ the following can occur:
\begin{itemize}
\item[(3-1)] { If $r\equiv 3 \mod 4$, $s\equiv 0 \mod 4$, or 
  
\noindent\quad $r\equiv 1\mod 8$, $s\equiv 6\mod 8$,
or 

\noindent\quad $r\equiv 5\mod 8$, $s\equiv 2\mod 8$

\noindent then each irreducible module $V_{irr;\pm}^{r,s}$ is minimal admissible.}
\item[(3-2)] {Otherwise none of the irreducible modules $V_{irr;\pm}^{r,s}$ is
admissible. 
\begin{itemize}
\item[(3-2-1)] {If $r\equiv 1\mod 8$, $s\equiv 2\mod 8$,
or 

\noindent\quad $r\equiv 5\mod 8$, $s\equiv 6\mod 8$

\noindent then $V_{irr;+}^{r,s}\oplus  V_{irr;+}^{r,s}$, $V_{irr;-}^{r,s}\oplus V_{irr;-}^{r,s}$ are minimal admissible modules, and the module $V_{irr;+}^{r,s}\oplus V_{irr;-}^{r,s}$ is not admissible.}
\item[(3-2-2)]{ If $s$ is odd, then the module $V_{irr;+}^{r,s}\oplus V_{irr;-}^{r,s}$ is minimal admissible and neither $V_{irr;+}^{r,s}\oplus V_{irr;+}^{r,s}$ nor $V_{irr;-}^{r,s}\oplus V_{irr;-}^{r,s}$ is admissible.}
\end{itemize}
}
\end{itemize}
}
\end{itemize}
\end{proposition}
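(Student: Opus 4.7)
The plan is to classify admissibility by analysing invariant bilinear forms on Clifford modules, using Schur's lemma and Bott periodicity~\eqref{eq:Bott_periodicity} to reduce to finitely many base cases. Part (1) is immediate: $\Pin(r,0)$ is compact, so averaging any positive-definite inner product on $U$ over $\Pin(r,0)$ yields one making every $J_z$ skew-adjoint; since every Clifford module splits into irreducibles, each irreducible module is minimal admissible with a definite scalar product.

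For parts (2) and (3), I would fix a positive-definite inner product $\langle\cdot,\cdot\rangle_0$ on $U$ making $J_{z_i}$ skew-adjoint for $i = 1,\ldots, r$ (this is part (1) applied to the restricted $\Cl_{r,0}$-action) and then twist by the product $\Omega_{-} := J_{z_{r+1}}\cdots J_{z_{r+s}}$ of the negative generators, setting
\[
\langle x, y\rangle_U := \langle \Omega_{-} x, y\rangle_0.
\]
This form is automatically non-degenerate since $\Omega_{-}$ is invertible, and whether it is symmetric or skew-symmetric is governed by the sign $\Omega_{-}^{2} = (-1)^{s(s-1)/2}$ together with $\Omega_{-}^{*}$-computations relative to $\langle\cdot,\cdot\rangle_0$. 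The admissibility condition itself reduces to checking how $\Omega_{-}$ commutes with each $J_{z_k}$, which only depends on $s \bmod 2$. When the outcome is a symmetric form on an irreducible, that irreducible is minimal admissible (giving (2-1)); when the outcome is skew-symmetric, passing to a direct sum of two copies of the irreducible and symmetrising on the doubled space yields an admissible symmetric form, explaining (2-2).

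For case (3), where $\Cl_{r,s}$ is not simple, I would additionally use the central volume element $\Omega^{r,s}$, acting as $\pm \Id$ on $V^{r,s}_{irr;\pm}$. Whether the minimal admissible module lives on $V^{r,s}_{irr;\epsilon}\oplus V^{r,s}_{irr;\epsilon}$ or on the mixed sum $V^{r,s}_{irr;+}\oplus V^{r,s}_{irr;-}$ depends on whether $\Omega^{r,s}$ acts as an isometry or an anti-isometry with respect to a candidate admissible form; this in turn is controlled by the parity of $s$ and the reversal sign $(-1)^{(r+s)(r+s-1)/2}$ of $\Omega^{r,s}$. Bott periodicity~\eqref{eq:Bott_periodicity} reduces the whole analysis to a tabulation over $(r,s)\in\{0,\ldots,7\}^{2}$, from which the $\bmod 8$ congruences in (3-1), (3-2-1), (3-2-2) emerge. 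The main obstacle is the bookkeeping in case (3): one must jointly track the type of the commutant $\End_{\Cl_{r,s}}(V_{irr})\in\{\R,\CC,\HH\}$, the sign $\Omega_{-}^{2}$, the reversal sign of $\Omega^{r,s}$, and the parity of $s$. Once these data are tabulated using the matrix realisations from~\cite{LawMich}, the classification is essentially forced, and the statement follows by matching each residue class modulo $8$ to one of the five scenarios.
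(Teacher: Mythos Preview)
The paper does not give its own proof of this proposition; it is quoted from \cite[Theorem 3.1]{Ci} and \cite[Proposition 1]{FM1} without argument. So there is no in-paper proof to compare against, only the cited sources, and your outline is broadly in the spirit of Ciatti's original approach (averaging to produce a definite auxiliary form, then twisting by a product of generators and analysing symmetry via Schur's lemma, with doubling when the outcome is skew).

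That said, there is a concrete gap in your construction. You produce the auxiliary positive-definite form $\langle\cdot,\cdot\rangle_0$ by averaging only over $\Pin(r,0)$, which makes $J_{z_1},\ldots,J_{z_r}$ skew-adjoint but leaves $J_{z_{r+1}},\ldots,J_{z_{r+s}}$ completely uncontrolled with respect to $\langle\cdot,\cdot\rangle_0$. Your subsequent computation of whether $\langle \Omega_{-}\,\cdot,\cdot\rangle_0$ is symmetric or skew, and whether it satisfies \eqref{admissibility condition 1}, uses the adjoints of \emph{all} the $J_{z_k}$, so as written the argument does not go through. The standard fix is to average over the finite multiplicative group generated by $\{\pm J_{z_1},\ldots,\pm J_{z_{r+s}}\}$ inside $\GL(U)$; this forces every $J_{z_k}$ to be $\langle\cdot,\cdot\rangle_0$-orthogonal, hence $J_{z_k}$ is skew for $k\le r$ (since $J_{z_k}^2=-\Id$) and symmetric for $k>r$ (since $J_{z_k}^2=\Id$). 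With that in hand your sign analysis becomes well-posed: one finds that the twisted form is symmetric iff $s\equiv 0,1\pmod 4$ and that admissibility of the naive twist holds only for $s$ even, so further adjustment (e.g.\ inserting one positive generator into the twisting element, or passing to the doubled module) is needed in the remaining residues. Once this is repaired, the rest of your plan---tracking the commutant type, the reversal signs, and reducing by~\eqref{eq:Bott_periodicity} to a finite table---is exactly how the cited references proceed.
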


The dimensions of minimal admissible modules need to be determined only for basic cases
\begin{equation}\label{basic cases}
\begin{array}{l}
(r,s)~\text{for}~0\leq r\leq 7~\text{and $0\leq s\leq 3$},\\
(r,s)~\text{for}~0\leq r\leq 3~\text{and $4\leq s\leq 7$}, ~\text{and}\\
(r,s)~\in \{(8,0), (0,8), (4,4)\}.
\end{array}
\end{equation} 
We use periodicity property~\eqref{eq:Bott_periodicity} to find the dimension of a minimal admissible module $\dim(V^{r+\mu,s+\nu}_{min})=\dim(V^{r,s}_{min})\cdot\dim(V^{\mu,\nu}_{min})=16\dim(V^{r,s}_{min})$ provided that 
$V^{r,s}_{min}$ is minimal admissible and
$(\mu,\nu)\in\{(8,0),(0,8),(4,4)\}$. Moreover $\dim(V^{r,s}_{min})=2^{r+s-p_{r,s}}$ where $p_{r,s}$ is the maximal number of symmetric positive mutually commuting involutions $PI_{r,s}$ that admits $V^{r,s}_{min}$, see Section~\ref{sec:Ers}, or~\cite[Section 2.3]{FM2}. We describe the number and the dimension
of minimal admissible modules $V^{r,s}_{min}$ in
Table~\ref{t:dim}. We indicate whether 
the scalar product restricted to the common 1-eigenspaces $E_{r,s}\subset V_{min}^{r,s}$ of the
involutions from $PI_{r,s}$ is neutral or sign definite, see Section~\ref{sec:Ers} or
~\cite[Section 2.6]{FM2} for details of the proof.
\begin{table}[htbp]
	\center
	\caption{Dimensions of minimal admissible modules}
\vskip0.1cm	
\scalebox{0.8}
{
\begin{tabular}{|c||c|c|c||c||c|c|c||c||c|}
\hline
		${\text{\small 8}} $&$ {\text{\small{16}}^{\pm}}$&${\text{\small 32}}^{\pm}$&${\text{\small{64}}}^{\pm}
		$&${\text{\small{64}$_{\times 2}^{\pm}$}}$&${\text{\small{128}}^{\pm}}$&${\text{\small{128}}^{\pm}}$&${\text{{\small{128}}}^{\pm}}$&$
		{\text{{\small{128}$_{\times 2}^{\pm}$}}} $&${\text{\small{256}}^{\pm}}$
\\
\hline\hline
		${\text{\small 7}}$ &$ {\text{\small{16}}^{N}}$&${\text{\small{32}}^{N}}$&$
		{\mathbf{\small{64}}^{N}} $&${\text{\small{64}}}^{\pm}
		$&${\mathbf{\small{128}}^{N}}$&${\mathbf{\small{128}}^{N}}  $&${\mathbf{\small{128}}^{N}}$&$ {\text{\small{128}}^{\pm}} $&${\text{\small{256}}^{N}}$
\\
\hline
		${\text{\small 6}}$ &${\text{\small{16}}^{N}}$&${\text{\small{16}$_{\times 2}^{N}$}}$&${\text{\small{32}}^{N}}$&${\text{\small{32}}^{\pm}}$&${\mathbf{\small{64}}^{N}}
		$&${\mathbf{\small 64}}^N_{\times 2}$&${\mathbf{\small{128}}^{N}} $&${\text{\small{128}}^{\pm}}$&$ {\text{\small{256}}^{N}} $
\\
\hline
		${\text{\small 5}} $&${\mathbf{\small 16}^{N}}$&${\text{\small 16}^{N}}$&${\text{\small 16}^{N}}$&${\text{\small 16}}^{\pm}$&${\mathbf{\small 32}}^{N}$&${\mathbf{\small 64}}^{N} $&${\mathbf{\small{128}}^{\pm}}$&${\text{\small{128}}^{N}} $&$\mathbf{\small{256}}^{N}$
\\
\hline\hline
		${\text{\small 4}} $&$  {\text{\small 8}^{\pm}}$&$ {\text{\small 8}^{\pm}}$&$
		{\text{\small 8}^{\pm}}$&$ 8_{\times 2}^{{\pm}}$&$16^{\pm}$&${\text{\small 32}}^{\pm}$&${\text{\small 64}}^{\pm}
		$&${\text{\small 64}_{\times 2}^{\pm}} $&${\text{\small{128}}^{\pm}}$
\\
\hline\hline
		${\text{\small 3}}$&${\mathbf{\small 8}}^{N}$&${\mathbf{\small 8}}^{N}$&${\mathbf{\small 8}^{N}}$&$8^{\pm}$&$16^{N}$&$32^{N}$
		&${\mathbf{\small 64}^{N}}$&$64^{\pm}$&${\mathbf{\small 128}}^{N}$
\\
\hline
		${\text{\small 2}}$&${\mathbf{\small 4}^{N}}$&$
		{\mathbf 4}_{\times 2}^{N}$&${\mathbf 8^{N}}$&$ 8^{\pm}$&$16^{N}$&$16_{\times 2}^{N}$&$32^{N}$&$32^{\pm} $&${\mathbf{\small 64}}^{N}$
\\
\hline
		${\text{\small 1}}$ &${\mathbf 2^{N}}$&${\mathbf 4^{N}}$&${\mathbf 8^{N}}$& $8^{\pm}$&$\mathbf {16}^{N}$&$16^{N}$&$16^{N}$&$16^{\pm}$&${\mathbf{\small 32}}^{N}$
\\
\hline\hline
		${\text{\small 0}} $&$  1^{\pm}$&$ 2^{\pm}$&$4^{\pm}$&$ 4_{\times 2}^{\pm}$&$ 8^{\pm}$&$ 8^{\pm}$&$ 8^{\pm}$&$ 8_{\times 2}^{\pm}$&$16^{\pm}$
		\\
		\hline\hline
		{s/r}&  {\text{\small 0}}& {\text{\small 1}}& 
		{\text{\small 2}}&{\text{\small 3}} & {\text{\small 4}}& {\text{\small 5}}& {\text{\small 6}}& {\text{\small 7}}& {\text{\small 8}}
		\\
		\hline
	\end{tabular}\label{t:dim}
	}
\end{table}
We make the following comments to Table~\ref{t:dim}:
\begin{itemize}
\item[(1)] {We use the black colour  when $\dim(V^{r,s}_{\min})=2\dim(V^{r,s}_{irr})$, see Proposition~\ref{r-s = 0,1,2 mod 4}, 
statements (2-2) and (3-2).
}
\item[(2)] {Writing the subscript "${\times 2}$" we show that the Clifford algebra has two minimal admissible modules corresponding to the non-equivalent irreducible modules, see Proposition~\ref{r-s = 0,1,2 mod 4}.
statements (3-1) and (3-2-1).}
\item[(3)] {The upper index "${N}$" means that the scalar product 
restricted to the common $1$-eigenspace $E_{r,s}$ is neutral. 
The fact that $E_{r,s}$ is a neutral space does not depend on the
choice of the scalar product on $V^{r,s}_{min}$.
}
\item[(4)] { The upper index "${\pm}$" shows that the scalar product
    restricted to the common 1-eigenspace $E_{r,s}$ of the system
    $PI_{r,s}$ is sign definite. 
The sign of the scalar product on $E_{r,s}$ depends on the choice of the admissible scalar product on the module $V^{r,s}_{min}$.
}
\end{itemize}


\subsection{System of involutions $PI_{r,s}$ and common 1-eigenspace $E_{r,s}$}\label{sec:Ers}



\subsubsection{Mutually commuting isometric involutions}


Recall that a linear map $\Lambda$ defined on a vector space $U$ with
a scalar
product $\la\cdot\,,\cdot\ra_{U}$ is called 
{\it symmetric} with respect to the scalar product $\la\cdot\,,\,\cdot\ra_{U}$, 
if $\la \Lambda x,y\ra_{U}=\la x,\Lambda y\ra_{U}$. We say that $\Lambda$ is
{\it positive} if it maps positive vectors to
positive vectors and negative vectors to negative vectors
and $\Lambda$ is {\it negative} if it reverses the positivity and negativity of the vectors.
Let $J_{z_i}$ be representation maps for an orthonormal basis
$\{z_1,\ldots,z_{r+s}\}$ of~$\mathbb R^{r,s}$. 
The simplest positive involutions, written as a product of the maps
$J_{z_i}$, are one of the following forms:
\begin{equation*}\label{eq:basicInvol}
\begin{cases}
\text{type (1)}:\  \mathcal{P}_1=J_{z_{i_1}} J_{z_{i_2}} J_{z_{i_3}} J_{z_{i_4}},\ 
\text{all}\ z_{i_k}~\text{are positive},
  \\
\text{type (2)}:\  \mathcal{P}_1=J_{z_{i_1}} J_{z_{i_2}} J_{z_{i_3}} J_{z_{i_4}},\ 
  \text{all}\ z_{i_k}~\text{are negative},
  \\
\text{type (3)}:\  \mathcal{P}_{2}= J_{z_{i_1}} J_{z_{i_2}} J_{z_{i_3}} J_{z_{i_4}},\ \text{two $z_{i_l}$ are positive and two are negative},
\\
\text{type (4)}:\  \mathcal{P}_{3}=J_{z_{i_1}} J_{z_{i_2}} J_{z_{i_3}},\qquad \text{all three
 $z_{i_{k}}$ are positive},
 \\
\text{type (5)}:\  \mathcal{P}_{4}=J_{z_{i_1}} J_{z_{i_2}} J_{z_{i_3}},\qquad\text{one $z_{i_l}$ is positive
  and two are negative}.
  \end{cases}
\end{equation*}
For a given minimal admissible module $V_{min}^{r,s}$, 
we denote by $PI_{r,s}$ a set of the maximal
number of mutually commuting symmetric positive 
involutions of types (1)-(5) 
and such that none of them is a product
of other involutions in $PI_{r,s}$. 
The set $PI_{r,s}$ is not unique, while 
the number of involutions $p_{r,s}=\#\{PI_{r,s}\}$ in $PI_{r,s}$ is unique for the given signature~$(r,s)$.
The ordering on the set $PI_{r,s}$ can be made, if necessary, in such a way 
that at most one involution of the type (4) or (5) is included in $PI_{r,s}$ and it is the last one. We denote by $PI^*_{r,s}$ the reduced system of involution, that contains only involutions of type (1)-(3). In the case when there are no involutions of type (4) or (5), we have $PI_{r,s}=PI_{r,s}^*$ and we write  $PI_{r,s}$ if no confusion arises.

We define the subspace $E_{r,s}$ of a minimal admissible module $V_{min}^{r,s}$ by
\begin{eqnarray*}\label{common 1-eigenspaceNco1}
E_{r,s}=\{v\in V^{r,s}_{min}&\mid&\ P_iv=v,\ \ i\leq p_{r,s}, 
\\
&&r-s \neq 3\mod 4,\ \text{or}\ r-s\equiv 3\mod 4 \ \text{with odd}\ s\}
\end{eqnarray*}
$$
E_{r,s}=\{v\in V^{r,s}_{min}\mid\
  P_iv=v,\ \ i\leq p_{r,s}-1,\ \ r-s\equiv 3\mod 4 \ \text{with even}\ s\}.
$$
We call $E_{r,s}$ the ``{\it common 1-eigenspace}'' for the system of involutions $PI_{r,s}$. The space $E_{r,s}$ is the minimal subspace of $V_{min}^{r,s}$ that is invariant under the action of all involutions from $PI_{r,s}$.
The system of involutions $PI_{r,s}$ does not depend on the scalar product on the admissible modules $V_{min}^{r,s}=(V_{min}^{r,s},\la\cdot\,,\cdot\ra_{V_{min}^{r,s}})$ 
and $V_{min}^{r,s}=(V_{min}^{r,s},-\la\cdot\,,\cdot\ra_{V_{min}^{r,s}})$. 
Nevertheless, the restrictions of the admissible scalar products on the respective $E_{r,s}$ will have opposite signs. It is indicated in Table~\ref{t:dim} that
\begin{itemize}
\item[(1)]{the restriction of the admissible scalar product on $E_{r,s}$ is sign definite for $r\equiv 0,1,2\mod 4$ and
$s\equiv 0 \mod 4$ or for $r\equiv 3\mod 4$ and
arbitrary $s$;}
\item[(2)]{otherwise the restriction of the admissible scalar product on the common 1-eigenspaces $E_{r,s}$ is neutral.}
\end{itemize}
From now on we use $\pm$ or $N$ as the upper index and write
$V_{min}^{r,s;+}$ ($V_{min}^{r,s;-}$) or $V_{min}^{r,s;N}$ if the restriction of the admissible scalar product on $E_{r,s}$ is positive (negative) definite or neutral.
We also use the lower index $\pm$ 
to distinguish the minimal admissible modules, corresponding to a choice of non equivalent irreducible modules that were mentioned in Proposition~\ref{r-s = 0,1,2 mod 4},
statements (3-1) and (3-2-1).  

According to these agreements any admissible module can be
decomposed into the orthogonal sum of minimal admissible modules, see
Proposition~\ref{properties of admissible module}, statement~(2). We distinguish the following possibilities. 
\begin{itemize}
\item[$$] {If $r-s\not\equiv 3 \mod 4$ and $s$ is arbitrary or $r-s\equiv 3 \mod 4$ and $s$ is odd then 
\begin{equation}\label{eq:rsneq3}
U= \big(\stackrel{p^+}\oplus V_{min}^{r,s;+}\big)\bigoplus\big(\stackrel{p^-}\oplus V_{min}^{r,s;-}\big).
\end{equation}
}
\item[$$] {If $r-s\equiv 3\mod 4$ and $s$ is even, then 
\begin{equation}\label{eq:rseq3}
U= \big(\stackrel{p^+_+}\oplus V_{min;+}^{r,s;+}\big)\bigoplus \big(\stackrel{p^-_+}\oplus V_{min;+}^{r,s;-}\big)
\bigoplus
\big(\stackrel{p^+_-}\oplus V_{min;-}^{r,s;+}\big)\bigoplus \big(\stackrel{p^-_-}\oplus V_{min;-}^{r,s;-}\big).
\end{equation}
}
\end{itemize} 

Since the involutions in $PI_{r,s}$ are symmetric, 
the eigenspaces of involutions are mutually orthogonal. 
The involutions commute, therefore, they decompose the eigenspaces of
other involutions into smaller (eigen)-subspaces. 
We give an example, that is crucial for the paper.

\begin{example}\label{ex:inv44}
The set $PI_{\mu,\nu}$ for $(\mu,\nu)\in\{(8,0),(0,8),(4,4)\}$ is given by
\begin{equation*}\label{mcpi2}
T_{1}=J_{\zeta_1}J_{\zeta_2}J_{\zeta_3}J_{\zeta_4},\, T_{2}=J_{\zeta_1}J_{\zeta_2}J_{\zeta_5}J_{\zeta_6},
\,T_{3}=J_{\zeta_1}J_{\zeta_2}J_{\zeta_7}J_{\zeta_8},\,T_{4}=J_{\zeta_1}J_{\zeta_3}J_{\zeta_5}J_{\zeta_7}.
\end{equation*}
The module $V_{min}^{\mu,\nu}$
is decomposed into $16$ one dimensional common eigenspaces of 
four involutions $T_{i}$. Let
$v\in E_{\mu,\nu}$
and $|\la v, v\ra_{V_{min}^{\mu,\nu}}|=1$. Then other common
eigenspaces
are spanned by $J_{z_{i}}v$, $i=1,\ldots, 8$, 
and $J_{z_1}J_{z_j}v$, $j=2,\ldots,8$. Hence we have
\begin{equation}\label{eq:decom08}
V_{min}^{\mu,\nu}=E_{\mu,\nu}\bigoplus_{i=1}^{8}\,J_{\zeta_i}(E_{\mu,\nu})
\bigoplus_{j=2}^{8} J_{\zeta_1}J_{\zeta_{j}}(E_{\mu.\nu}).
\end{equation}
The value $\la v, v\ra_{V_{min}^{\mu,\nu}}$ can be $\pm 1$
according to the admissible scalar product, however 
we may assume $\la v, v\ra_{V_{min}^{\mu,\nu}}=1$, see~\cite[Example 1, Lemma 3.2.5]{FM2}. 
\end{example}


\section{Pseudo $H$-type Lie algebras}



\subsection{Definitions of pseudo $H$-type Lie algebras and their Lie groups}\label{subsec:Defin.Pseudo}


Let $(U,\la\cdot\,,\cdot\ra_{U})$ be an admissible module of a Clifford
algebra $\Cl_{r,s}$.
We define a vector valued skew-symmetric bi-linear form 
\[
\begin{array}{cccccc}
[\cdot\,,\cdot]\colon &U\times U &\to &\mathbb{R}^{r,s}
\\
&(x,y)&\longmapsto &[x,y]
\end{array}
\]
by the relation
\begin{equation}\label{def of skew-symmetric bi-linear form}
\la J_{z}x,y\ra_{U}=\la z,[x,y]\ra_{r,s}.
\end{equation}

\begin{definition}{\em \cite{Ci}}
The space $U\oplus \mathbb{R}^{r,s}$ endowed with the Lie bracket
\[
[(x,z),(y,w)]=(0,[x,y])
\]
is called a pseudo $H$-type Lie algebra and it is denoted by $\n_{r,s}(U)$.
\end{definition}
A pseudo $H$-type Lie algebra $\n_{r,s}(U)$ is 2-step nilpotent, the space $\mathbb{R}^{r,s}$ is the centre, and the direct sum $U\oplus\mathbb{R}^{r,s}$ is orthogonal with respect to $\la\cdot\,,\cdot\ra_{U}+\la\cdot\,,\cdot\ra_{r,s}$.

The Baker-Campbell-Hausdorff formula allows us to define the Lie group structure on the space
$U\oplus\mathbb{R}^{r,s}$ by
\[
(x,z)*(y,w)=\bigr(x+y,z+w+\frac{1}{2}[x,y]\bigl).
\]
The Lie group is denoted by ${N}_{r,s}(U)$ and is called the pseudo $H$-type Lie group. 
Note that the scalar product $\la\cdot\,,\cdot\ra_{U}$ is implicitly
included in the definitions of the $H$-type Lie algebra and the
corresponding Lie group.
In general, the Lie algebra structure might change 
if we replace the admissible scalar product on $U$, see~\cite{AFMV, E, Eber03}. 


\subsection{General structure of the group $\Aut(\n_{r,s}(U))$}\label{GeneralAut}


In the present section all the matrix groups are considered over the field $\mathbb R$. Let $\n=(U\oplus \z,[.\,,.])$ be a real 2-step nilpotent graded Lie algebra with the centre $\z$ and $\Aut(\n)$ be a group of automorphisms of this Lie algebra. We use the identification $U\cong\mathbb R^n$ and $\z\cong\mathbb R^m$. An automorphism has to preserve the centre and therefore an element $\phi\in\Aut(\n)$ has to be of the form
$$
\phi=\begin{pmatrix}A&0\\B&C\end{pmatrix},\quad A\in\GL(n,\R),\ \ C\in\GL(m,\R),\ \ B\in {\rm Hom}(\mathbb R^n,\mathbb R^m).
$$
where $C([u,v])=[Au,Av]$. The subgroup 
$$
\mathcal B(\n)=\Big\{ \begin{pmatrix}t\Id_n&0\\B&t^2\Id_m\end{pmatrix},\quad B\in {\rm Hom}(\mathbb R^n,\mathbb R^m),\ \ t\neq 0\Big\}
$$ 
is a normal subgroup of $\Aut(\n)$. 
The factor group 
$$
\Aut(\n)/\mathcal B(\n)=\Big\{\begin{pmatrix}A&0\\0&C\end{pmatrix},\ A\in \SL(n,\R),\ C\in \GL(m,\R),\ C([u,v])=[Au,Av]\Big\}
$$
is a subgroup of $\Aut(\n)$ will be denoted by $\C(\n):=\Aut(\n)/\mathcal B(\n)$. Thus the group $\Aut(\n)$ is a semi-direct product
  of $\mathcal{B}(\n)$ and $\C(\n)$, and it is enough to determine the
  group $\C(\n)$. From now on we write $A\oplus C$ for elements of the group $\C(\n)$. The group 
$$
\Aut^0(\n)=\Big\{A\oplus \Id_m,\quad A\in\SL(n,\R),\quad [u,v]=[Au,Av]\Big\}
$$
is a normal subgroup in $\C(\n)$. 

Let us assume now, that $\n$ is a pseudo $H$-type Lie algebra
$\n_{r,s}(U)=U\oplus\z$ 
with $\z=\mathbb R^{r,s}$, $r+s=m$. 
Then the group  $\Aut^0(\n_{r,s}(U))$ can be written as follows
\begin{equation}\label{eq:Aut0}
\Aut^0(\n_{r,s}(U))=\Big\{A\oplus\Id_m,\ A\in \SL(n,\R),\ A^{\tau}J_zA=J_z\ \text{for any}\ z\in\mathbb R^{r,s}\Big\}.
\end{equation}

\begin{lemma}\cite[Theorem 2]{FM1}\label{lem:orth_rs}
The subgroup of the maps $C\in\GL(m,\R)$ such that $A\oplus C\in \C(\n_{r,s}(U))$ is contained in $\Orth(r,s)$, $r+s=m$. 
\end{lemma}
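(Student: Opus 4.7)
The plan is to turn the automorphism condition $C[u,v]=[Au,Av]$ into an operator identity on $U$ involving the Clifford action, and then compare polynomial invariants of both sides to extract how $C$ acts on $\la\cdot\,,\cdot\ra_{r,s}$. Writing $A^\sharp$ for the adjoint of $A$ with respect to $\la\cdot\,,\cdot\ra_U$ and $C^\sharp$ for the adjoint of $C$ with respect to $\la\cdot\,,\cdot\ra_{r,s}$, I would pair the automorphism condition against an arbitrary $z\in\R^{r,s}$ and apply the defining relation $\la J_zu,v\ra_U=\la z,[u,v]\ra_{r,s}$ twice, obtaining
\begin{equation*}
\la A^\sharp J_zA\,u,v\ra_U=\la J_zAu,Av\ra_U=\la z,[Au,Av]\ra_{r,s}=\la C^\sharp z,[u,v]\ra_{r,s}=\la J_{C^\sharp z}u,v\ra_U
\end{equation*}
for all $u,v\in U$, hence the key operator identity
\begin{equation*}
A^\sharp J_zA=J_{C^\sharp z},\qquad z\in\R^{r,s}.
\end{equation*}

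Next I would take determinants. Since $A\in\SL(n,\R)$ and $A^\sharp=h^{-1}A^T h$ (with $h$ the Gram matrix of $\la\cdot\,,\cdot\ra_U$) both have determinant $1$, this gives $\det J_{C^\sharp z}=\det J_z$. From $J_z^2=-\la z,z\ra_{r,s}\Id_U$ together with $\tr J_z=0$, a direct eigenvalue count (conjugate imaginary pairs when $\la z,z\ra>0$, balanced real pairs when $\la z,z\ra<0$) yields the polynomial identity $\det J_z=\la z,z\ra_{r,s}^{n/2}$, where $n=\dim U$. Therefore, as polynomials in $z$,
\begin{equation*}
\la C^\sharp z,C^\sharp z\ra_{r,s}^{n/2}=\la z,z\ra_{r,s}^{n/2},
\end{equation*}
which forces $\la C^\sharp z,C^\sharp z\ra_{r,s}=\epsilon\la z,z\ra_{r,s}$ for a single constant $\epsilon\in\{\pm 1\}$ (immediately with $\epsilon=+1$ when $n/2$ is odd, since then $x\mapsto x^{n/2}$ is a bijection of $\R$).

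It remains to rule out $\epsilon=-1$ in the cases where $n/2$ is even. The relation $(C^\sharp)^T h_{r,s}C^\sharp=\epsilon h_{r,s}$ gives $(\det C^\sharp)^2=\epsilon^{r+s}$, which already excludes $\epsilon=-1$ whenever $r+s$ is odd. For $r+s$ even, the alternative $\la C^\sharp z,C^\sharp z\ra=-\la z,z\ra$ would carry the signature $(r,s)$ to $(s,r)$, which is incompatible with $C^\sharp$ being a bijection of $\R^{r,s}$ unless $r=s$. The residual case $r=s$ with $n$ a multiple of $4$ is the principal obstacle: I would close it by a Pfaffian refinement---the degree-$n/2$ polynomial $\mathrm{Pf}(hJ_z)$ satisfies $\mathrm{Pf}(hJ_{C^\sharp z})=(\det A)\,\mathrm{Pf}(hJ_z)=\mathrm{Pf}(hJ_z)$, which sharpens the conclusion in further dimensions---supplemented, where the Pfaffian too is insufficient, by a finer structural argument exploiting the admissibility of the module (for instance contrasting the purely imaginary spectrum of $J_z$ for $\la z,z\ra>0$ with its purely real spectrum for $\la z,z\ra<0$, constraints that must be respected under $A^\sharp J_zA=J_{C^\sharp z}$ since the left side is conjugate via $A$ to $(AA^\sharp)J_z$). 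Thus $C^\sharp\in\Orth(r,s)$ and hence $C\in\Orth(r,s)$.
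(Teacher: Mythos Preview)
The paper does not supply a proof of this lemma; it simply cites \cite[Theorem~2]{FM1}. So your attempt must be judged on its own merits against that external source.

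Your derivation of the operator identity $A^\sharp J_zA=J_{C^\sharp z}$ is correct, and the determinant argument leading to $\la C^\sharp z,C^\sharp z\ra_{r,s}=\epsilon\,\la z,z\ra_{r,s}$ for a single global sign $\epsilon\in\{\pm1\}$ is sound (your formula $\det J_z=\la z,z\ra_{r,s}^{n/2}$ is right; the balancing of the real eigenvalues for negative $z$ follows because the $\pm$--eigenspaces of $J_z$ are null by admissibility, hence each has dimension exactly $n/2$ in the neutral module). Your signature argument then correctly forces $\epsilon=+1$ whenever $r\neq s$.

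The genuine gap is the case $r=s$, and it cannot be closed by Pfaffians or by the spectral heuristics you sketch, because $\epsilon=-1$ \emph{actually occurs}. On the minimal admissible $\Cl_{1,1}$-module $V_{\min}^{1,1}$ (with integral basis $x_1=v,\ x_2=J_1J_2v,\ x_3=J_1v,\ x_4=J_2v$) the map
\[
A_0:\ x_1\mapsto x_1,\ x_2\mapsto x_2,\ x_3\mapsto -x_4,\ x_4\mapsto -x_3
\]
satisfies $A_0^\sharp J_{z_1}A_0=J_{z_2}$ and $A_0^\sharp J_{z_2}A_0=J_{z_1}$, with $\det A_0=-1$. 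Taking $U=V_{\min}^{1,1}\oplus V_{\min}^{1,1}$ and $A=A_0\oplus A_0$, one gets $\det A=+1$, so $A\oplus C\in\C(\n_{1,1}(U))$ with $C^\sharp$ swapping $z_1\leftrightarrow z_2$; this $C$ is an anti-isometry of $\R^{1,1}$, not an element of $\Orth(1,1)$. Your Pfaffian refinement detects exactly $\det A$ in the exponent, which is why it rules out $\epsilon=-1$ on a single copy of $V_{\min}$ but fails on the direct sum. What the cited result in \cite{FM1} actually yields is the dichotomy that $C$ is an isometry \emph{or} an anti-isometry of $\R^{r,s}$ (this is the content of Theorem~\ref{property of isomorphism}: the signatures agree or are swapped). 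For $r\neq s$ no anti-isometry exists, so $C\in\Orth(r,s)$; for $r=s$ the lemma as stated here must be read with this caveat, and your proof strategy cannot establish more than that dichotomy.
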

Due to Lemma~\ref{lem:orth_rs}, we conclude that 
$$
\C(\n_{r,s}(U))=\Big\{A\oplus C,\ A\in \SL(n,\R),\ C\in \Orth(r,s),\ A^{\tau}J_zA=J_{C^{\tau}(z)}\ \ z\in\mathbb R^{r,s}\Big\},
$$
where the transpositions $A^{\tau}$ and $C^{\tau}$ are understood with respect to the corresponding scalar products on $U$ and on $\mathbb R^{r,s}$.
In the next step we show that the map
$$
\C(\n_{r,s}(U))\to\Orth(r,s):\quad A\oplus C\mapsto C
$$
is surjective.
To achieve the goal we recall the notion of the $\Pin(r,s)$ group. The map
$$
\mathbb{R}^{r,s}\ni z\mapsto -z\in \mathbb{R}^{r,s}\subset \Cl_{r,s}
$$ is extended to the Clifford algebra automorphism $\alpha\colon\Cl_{r,s} \to \Cl_{r,s}$
by the universal property of the Clifford algebras. Note two properties of the map $\alpha$:
$$
\alpha^2=\Id,\qquad \alpha(\phi_1\phi_2)=\alpha(\phi_1)\alpha(\phi_2),\quad \phi_1,\phi_2\in\Cl_{r,s}.
$$
We denote by $\Cl_{r,s}^{\times}$ the group of invertible elements in
$\Cl_{r,s}$ and in particular ${\mathbb{R}^{r,s}}^{\times}=\{z\in\mathbb R^{r,s}\vert\ \la z,z\ra_{r,s}\not=0\}$. The representation 
$\widetilde{\Ad}\colon{\mathbb R^{r,s}}^{\times}\to\End(\mathbb R^{r,s})$, 
is defined as
$$
\widetilde{\Ad}_{z}(w)=-z wz^{-1}
=\left(w-2\frac{\la w,z\ra_{r,s}}{\la z,z\ra_{r,s}}z\right)\in\mathbb R^{r,s}\quad 
\text{for}\quad w\in\mathbb R^{r,s},\ \ z\in {\mathbb{R}^{r,s}}^{\times}.
$$
The map $\widetilde{\Ad}_{z}\colon \mathbb R^{r,s}\to\mathbb R^{r,s}$ is the reflection of the vector $w\in\mathbb R^{r,s}$ with respect to the hyperplane orthogonal to the vector $z\in\mathbb R^{r,s}$.
Then it extends to the twisted adjoint representation 
$\widetilde{\Ad}\colon\Cl_{r,s}^{\times}\to\GL(\Cl_{r,s})$ by setting
\begin{equation}\label{twisted adjoint representation}
\Cl_{r,s}^{\times}\ni \varphi\longmapsto \widetilde{\Ad}_{\varphi},\quad
\widetilde{\Ad}_{\varphi}(\phi)=\alpha(\varphi) \phi \varphi^{-1}, \quad \phi\in \Cl_{r,s}.
\end{equation}
The map $\widetilde{\Ad}_{z}$ for $z\in{\mathbb{R}^{r,s}}^{\times}$, leaving the space
$\mathbb{R}^{r,s}\subset \Cl_{r,s}$ invariant, is also an isometry:
$
\la\widetilde{\Ad}_{z}(w),\widetilde{\Ad}_{z}(w)\ra_{r,s}=\la
w, w\ra_{r,s}$. 
Moreover, the properties to preserve the space $\mathbb R^{r,s}$ and the bilinear symmetric form $\la.\,,.\ra_{r,s}$ are fulfilled for the group
\begin{equation}\label{eq:Cliff}
{\rm P}(\mathbb R^{r,s})=\{v_{1}\cdots v_{k}\in \Cl_{r,s}^{\times}\mid\ \la v_{i},v_{i}\ra_{r,s}\neq 0\}.
\end{equation}
The map $\widetilde{\Ad}\colon {\rm P}(\mathbb R^{r,s})\to \Orth(r,s)$ is a surjective homomorphism~\cite[Theorem 2.7]{LawMich}.
It particularly implies $\widetilde{\Ad}_{\varphi^{-1}}=\widetilde{\Ad}_{\varphi}^{\tau}$.
Subgroups of $P(\mathbb R^{r,s})\subset \Cl_{r,s}^{\times}$ defined by
\begin{align*}
&\Pin(r,s)=\{v_{1}\cdots v_{k}\in \Cl_{r,s}^{\times}\mid\ \la v_{i},v_{i}\ra_{r,s}=\pm 1\},
\\
&\Spin(r,s)=\{v_{1}\cdots v_{k}\in \Cl_{r,s}^{\times}\mid\ k~\text{is even},\la v_{i},v_{i}\ra_{r,s}=\pm 1\},
\end{align*}
are called pin and spin groups, respectively. Recall the following.
\begin{proposition}\label{prop:coverings}\cite{ABS, LawMich}
The map 
$
\widetilde{\Ad}\colon\Pin(r,s)\to \Orth(r,s) 
$
is the double covering map.
\end{proposition}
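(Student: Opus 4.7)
The plan is to verify three points: first, that $\widetilde{\Ad}$ sends $\Pin(r,s)$ into $\Orth(r,s)$ and is a group homomorphism; second, that it is surjective; third, that its kernel is precisely $\{\pm 1\}$.

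For the first point, the text has already noted that for a non-null $z \in \mathbb{R}^{r,s}$ the map $\widetilde{\Ad}_z$ preserves $\mathbb{R}^{r,s}$ and acts on it as the reflection orthogonal to $z$, hence lies in $\Orth(r,s)$. Since $\alpha$ is an algebra automorphism, the definition~\eqref{twisted adjoint representation} yields $\widetilde{\Ad}_{\varphi\psi} = \widetilde{\Ad}_\varphi \circ \widetilde{\Ad}_\psi$ on all of $\Cl_{r,s}$. Consequently any $\varphi = v_1\cdots v_k \in \Pin(r,s)$ satisfies $\widetilde{\Ad}_\varphi = \widetilde{\Ad}_{v_1}\circ\cdots\circ\widetilde{\Ad}_{v_k}$, a composition of reflections on $\mathbb{R}^{r,s}$, which is a well-defined element of $\Orth(r,s)$, and the assignment is manifestly multiplicative.

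For surjectivity I would invoke the Cartan–Dieudonné theorem: every $C \in \Orth(r,s)$ is a product of at most $r+s$ reflections through hyperplanes orthogonal to non-null vectors $w_1,\dots,w_k$. Rescaling each $w_i$ to a unit vector $v_i$ with $\langle v_i,v_i\rangle_{r,s} = \pm 1$ does not change the reflection, so setting $\varphi = v_1\cdots v_k$ produces an element of $\Pin(r,s)$ with $\widetilde{\Ad}_\varphi = C$.

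The main obstacle is identifying the kernel. Suppose $\varphi \in \Pin(r,s)$ with $\widetilde{\Ad}_\varphi = \Id_{\mathbb{R}^{r,s}}$, i.e.\ $\alpha(\varphi)z = z\varphi$ for every $z \in \mathbb{R}^{r,s}$. Decompose $\varphi = \varphi^+ + \varphi^-$ according to the $\mathbb{Z}_2$-grading of $\Cl_{r,s}$; since $\alpha$ is $+1$ on the even part and $-1$ on the odd part, this splits into $\varphi^+ z = z\varphi^+$ and $-\varphi^- z = z\varphi^-$ for all $z \in \mathbb{R}^{r,s}$. Now expand $\varphi^\pm$ in the basis of $\Cl_{r,s}$ consisting of ordered products $z_{i_1}\cdots z_{i_\ell}$ of orthonormal basis vectors, and for each basis vector $z_i$ separate the terms containing $z_i$ from those that do not, using $z_i z_{j} = - z_j z_i$ for $i\neq j$ and $z_i^2 = \pm 1$. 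Comparing coefficients in the commutation relations forces every multi-index term to vanish: the even part reduces to a scalar $\varphi^+ \in \mathbb{R}\cdot\mathbf 1$, and the odd part $\varphi^-$ is zero. Hence $\varphi = \lambda\mathbf 1$ for some $\lambda \in \mathbb{R}$, and the defining condition $\varphi = v_1\cdots v_k$ with $\langle v_i,v_i\rangle_{r,s} = \pm 1$ forces $\lambda^2 = 1$, so $\varphi = \pm 1$. Together with the obvious check that both $\pm 1$ lie in the kernel, this gives $\ker\widetilde{\Ad} = \{\pm 1\}$; combined with the homomorphism and surjectivity statements, $\widetilde{\Ad}\colon \Pin(r,s) \to \Orth(r,s)$ is a $2$-fold covering.
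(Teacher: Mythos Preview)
Your proof is correct and follows the standard argument (essentially the one given in Lawson--Michelsohn): reflections generate $\Orth(r,s)$ by Cartan--Dieudonn\'e, and the kernel computation via the even/odd decomposition is the classical one. The paper itself does not supply a proof of this proposition; it simply cites \cite{ABS, LawMich}, so there is no in-paper argument to compare against.
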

Let us introduce the norm mapping $N\colon \Cl_{r,s}\to\Cl_{r,s}$ defined by $N(\phi)=\phi\cdot\alpha(\phi^T)$. It is easy to see that $N(z)=\la z,z\ra_{r,s}$ for any $z\in\mathbb R^{r,s}$.

\begin{proposition}\label{lem:phiArbitrary} 
Let $J\colon\Cl_{r,s}\to\End(U)$ be a Clifford algebra representation and $\varphi\in \Pin(r,s)$. 
Then the map $\mathcal P\colon \Pin(r,s)\to \C(\n_{r,s}(U))$ defined by
$$
\varphi\mapsto \mathcal P(\varphi)=\begin{pmatrix}J_{\varphi}&0\\ 0&(-1)^nN(\varphi)\widetilde{\Ad}_{\varphi}\end{pmatrix},
$$
where $N(\varphi)=\varphi\cdot \alpha(\varphi^T)$, is the group homomorphism.
\end{proposition}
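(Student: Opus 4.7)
The plan is to verify two properties: (a) each $\mathcal{P}(\varphi)$ belongs to $\C(\n_{r,s}(U))$, and (b) the assignment $\varphi\mapsto\mathcal{P}(\varphi)$ is multiplicative. Write $\varphi=v_1\cdots v_k\in\Pin(r,s)$ with $\la v_i,v_i\ra_{r,s}=\pm 1$. Each $J_{v_i}$ is invertible, so $J_\varphi\in\GL(U)$. Since $\widetilde{\Ad}_\varphi\in\Orth(r,s)$ by Proposition~\ref{prop:coverings} and the scalar $(-1)^nN(\varphi)=\pm 1$ is an isometry of $\R^{r,s}$, the second diagonal block lies in $\Orth(r,s)$. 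The substantive condition is the Lie-bracket compatibility
\[
[J_\varphi u,J_\varphi v]=(-1)^nN(\varphi)\,\widetilde{\Ad}_\varphi([u,v]),\qquad u,v\in U.
\]

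To prove the compatibility, pair both sides with an arbitrary $z\in\R^{r,s}$ and use the defining relation~\eqref{def of skew-symmetric bi-linear form} together with the admissibility identity~\eqref{admissibility condition 1}:
\[
\la z,[J_\varphi u,J_\varphi v]\ra_{r,s}=\la J_zJ_\varphi u,J_\varphi v\ra_U=\la J_\varphi^\tau J_zJ_\varphi u,v\ra_U,
\]
which reduces the claim to the operator identity $J_\varphi^\tau J_zJ_\varphi=(-1)^nN(\varphi)\,J_{\widetilde{\Ad}_\varphi^\tau(z)}$ on $U$. The plan is to split the verification into two steps. First, establish $J_\varphi^\tau=N(\varphi)\,J_\varphi^{-1}$: from $J_{v_i}^\tau=-J_{v_i}$ one gets $J_\varphi^\tau=(-1)^kJ_{\varphi^T}$ with $\varphi^T=v_k\cdots v_1$, and iterating $v_iv_i=-\la v_i,v_i\ra_{r,s}$ collapses $\varphi\,\varphi^T$ to $(-1)^kN(\varphi)$, whence the claim. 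Second, use $J_\varphi^{-1}J_zJ_\varphi=J_{\varphi^{-1}z\varphi}$ and reduce $\varphi^{-1}z\varphi$ to a multiple of $\widetilde{\Ad}_\varphi^\tau(z)$ by induction on $k$, invoking the reflection formula for $\widetilde{\Ad}_v$, the group law $\widetilde{\Ad}_{\varphi\psi}=\widetilde{\Ad}_\varphi\circ\widetilde{\Ad}_\psi$ (which follows from $\alpha$ being an algebra automorphism), and the identity $\widetilde{\Ad}_{\varphi^{-1}}=\widetilde{\Ad}_\varphi^\tau$ already recorded in the excerpt.

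For multiplicativity, the $(1,1)$-block identity $J_{\varphi\psi}=J_\varphi J_\psi$ is the algebra-homomorphism property of $J$. For the $(2,2)$-block, combine the excerpt's statement that $\widetilde{\Ad}$ is a group homomorphism on $P(\R^{r,s})$ with the multiplicativity of $N$ on $\Pin(r,s)$: since $N(\psi)\in\R$ is central, $N(\varphi\psi)=\varphi\psi\,\alpha(\psi^T)\alpha(\varphi^T)=\varphi\,N(\psi)\,\alpha(\varphi^T)=N(\varphi)N(\psi)$; the two $(-1)^n$ factors square to $1$, which is consistent because $n=\dim U$ is forced to be even (a skew-adjoint invertible $J_z$ gives $\det J_z=(-1)^n\det J_z$). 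The main obstacle will be sign bookkeeping in the central operator identity: minus signs coming from transposing $k$ skew-adjoint factors, from each $v_iv_i=-\la v_i,v_i\ra_{r,s}$ inside $\varphi\,\varphi^T$, and from each $\alpha(v_i)=-v_i$ inside the twisted adjoint must all conspire to match the prefactor $(-1)^nN(\varphi)$ prescribed in the statement.
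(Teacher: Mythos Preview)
Your overall strategy coincides with the paper's: reduce membership in $\C(\n_{r,s}(U))$ to the operator identity $J_\varphi^\tau J_z J_\varphi=(-1)^nN(\varphi)\,J_{\widetilde{\Ad}_\varphi^\tau(z)}$ and unwind it using $J_{v_i}^\tau=-J_{v_i}$, $v_iv_i=-\la v_i,v_i\ra_{r,s}$, and $\widetilde{\Ad}_{\varphi^{-1}}(z)=\alpha(\varphi^{-1})z\varphi$. The paper does exactly this, in slightly different packaging (it uses $\alpha(\varphi^{-1})=N(\varphi)^{-1}\varphi^T$ and $J_{\varphi^T}=(-1)^nJ_\varphi^\tau$ rather than your intermediate $J_\varphi^\tau=N(\varphi)J_\varphi^{-1}$). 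You also supply the multiplicativity argument, which the paper omits.

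There is, however, a genuine error: you read $n$ as $\dim U$ and then argue it is always even, so that the prefactor $(-1)^n$ is harmless. In the statement (and the paper's proof) $n$ is the \emph{word length} of $\varphi=v_1\cdots v_n$; the paper's notation clashes with the earlier use of $n=\dim U$, but the proof makes the intended meaning clear. This matters. Your own computation, carried through, gives
\[
J_\varphi^\tau J_z J_\varphi
= N(\varphi)\,J_{\varphi^{-1}z\varphi}
= N(\varphi)\,J_{(-1)^{k}\widetilde{\Ad}_\varphi^\tau(z)}
= (-1)^{k}N(\varphi)\,J_{\widetilde{\Ad}_\varphi^\tau(z)},
\]
with $k$ the word length, because $\alpha(\varphi^{-1})=(-1)^k\varphi^{-1}$. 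If one sets $(-1)^n=1$ (your reading), the target identity becomes $J_\varphi^\tau J_z J_\varphi=N(\varphi)J_{\widetilde{\Ad}_\varphi^\tau(z)}$, which is false for every odd-length $\varphi$; already $\varphi=z_1$ with $\la z_1,z_1\ra_{r,s}=1$ gives $J_{z_1}^\tau J_{z_1} J_{z_1}=J_{z_1}$ on the left but $-J_{z_1}$ on the right. So the sign you dismissed is precisely the one that makes $\mathcal P(\varphi)$ land in $\C(\n_{r,s}(U))$. Once you reinterpret $n$ as the word length, your outline is correct and your multiplicativity check (now reading $(-1)^{n_\varphi}(-1)^{n_\psi}=(-1)^{n_{\varphi\psi}}$) goes through verbatim.
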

\begin{proof}
First we show that $P(\varphi)\in \C(\n_{r,s}(U))$ that is 
$$
J_{\varphi}^{\tau}J_zJ_{\varphi}=J_{(-1)^nN(\varphi)\widetilde{\Ad}_{\varphi}^{\tau}}.
$$ 
Let $\varphi\in\Pin(r,s)$. 
Then $\widetilde{\Ad}_{\varphi}^\tau\in \Orth(r,s)$. 
Moreover, $\widetilde{\Ad}_{\varphi}^\tau(z)=\widetilde{\Ad}_{\varphi^{-1}}(z)=\alpha(\varphi^{-1})z\varphi$. Thus for any $\varphi=\prod_{k=1}^n x_k\in\Pin(r,s)$ we obtain
$$
\varphi^T=(x_1\cdot\ldots\cdot x_n)^T=(x_n\cdot\ldots\cdot x_1)\quad\text{and}\quad 
N(\varphi)=\varphi\cdot \alpha(\varphi^T)=\prod_{k=1}^{n}\langle x_k,x_k\rangle_{r,s}.
$$ 
The properties of the maps $*^T$ and $N$ can be found in~\cite[Page 15]{LawMich}. Then since $x_k^{-1}=\frac{\alpha(x_k)}{\langle x_k,x_k\rangle_{r,s}}$, $k=1,\ldots,n$, and $J_{x_k}^{\tau}=-J_{x_k}$ we have 
$\alpha(\varphi^{-1})=N^{-1}(\varphi)\varphi^T$ and $J_{\varphi^T}=(-1)^nJ_{\varphi}^{\tau}$.
Thus
$$
J_{\widetilde{\Ad}_{\varphi}^{\tau}(z)}=J_{\widetilde{\Ad}_{\varphi^{-1}}(z)}=J_{\alpha(\varphi^{-1})}J_zJ_{\varphi}=\frac{(-1)^n}{N(\varphi)}J_{\varphi}^{\tau}J_zJ_{\varphi}.
$$
It proves the proposition.
\end{proof}

\begin{proposition}\label{automorohisms from Clifford algebra} 
Let $J\colon\Cl_{r,s}\to\End(U)$ be a Clifford algebra representation and $\varphi\in \Pin(r,s)$. The both lines in the following diagram
\begin{equation}\label{CD11}
\begin{CD}
\{\Id\}@>>>\Aut^0(\n_{r,s}(U))@>{\iota}>> \C(\n_{r,s}(U)) @>{\psi
}>> \Orth(r,s)@>>>{\{\Id\}}\\
@.@AAA{\mathcal{P}}@AAA@AA{\Id}A\\
\{\Id\}@>>>\mathbb Z_{2}@>>> \Pin(r,s) @ >{\widetilde{\Ad}}>>\Orth(r,s)@>>>{\{\Id\}}.
\end{CD}
\end{equation}
are short exact sequences. The kernel $\Aut^0(\n_{r,s}(U))$ is defined in~\eqref{eq:Aut0}. 
\end{proposition}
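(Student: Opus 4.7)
The plan is to treat the two exact rows and the two commutative squares separately. The bottom row is essentially the content of Proposition~\ref{prop:coverings}: surjectivity of $\widetilde{\Ad}\colon\Pin(r,s)\to\Orth(r,s)$ is already given there, and its kernel is $\{\pm 1\}\cong\mathbb{Z}_{2}$ because any $\varphi\in\Pin(r,s)$ with $\alpha(\varphi)w\varphi^{-1}=w$ for all $w\in\mathbb{R}^{r,s}$ must commute (up to sign) with every generator of $\mathbb{R}^{r,s}$, and so must lie in the centre of $\Cl_{r,s}^{\times}$ intersected with $\Pin(r,s)$, namely $\{\pm 1\}$.

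For the top row, I would first observe that $\iota$ is the inclusion of a subgroup and hence injective, and that $\ker(\psi)=\Aut^{0}(\n_{r,s}(U))$ by unpacking definitions: an element $A\oplus C\in\C(\n_{r,s}(U))$ has $\psi(A\oplus C)=C=\Id_{m}$ precisely when the compatibility relation $A^{\tau}J_{z}A=J_{z}$ holds for every $z\in\mathbb{R}^{r,s}$, which is exactly the defining condition in~\eqref{eq:Aut0}. The substantive step is surjectivity of $\psi$. Given $C\in\Orth(r,s)$, I would use Proposition~\ref{prop:coverings} to pick $\varphi\in\Pin(r,s)$ with $\widetilde{\Ad}_{\varphi}=C$, and then invoke Proposition~\ref{lem:phiArbitrary} to produce $\mathcal{P}(\varphi)\in\C(\n_{r,s}(U))$, whose lower-right block equals $(-1)^{n}N(\varphi)\widetilde{\Ad}_{\varphi}=\epsilon\,C$ with $\epsilon\in\{\pm 1\}$. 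If $\epsilon=+1$ we are done; if $\epsilon=-1$, I would compose with an element of $\C(\n_{r,s}(U))$ whose centre component is $-\Id_{m}\in\Orth(r,s)$ (built as $\mathcal{P}$ of a suitable product of unit-norm basis vectors, exploiting that $-\Id_{m}\in\Orth(r,s)$ lies in the image of $\widetilde{\Ad}$) to flip the sign.

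Commutativity of the right square is nothing more than the identity $\psi(\mathcal{P}(\varphi))=(-1)^{n}N(\varphi)\widetilde{\Ad}_{\varphi}$, which is the very definition of $\mathcal{P}$ in Proposition~\ref{lem:phiArbitrary}. Commutativity of the left square reduces to computing $\mathcal{P}(\pm 1)=\pm\Id_{U}\oplus\Id_{m}$, which lies in $\Aut^{0}(\n_{r,s}(U))$ because $\dim U$ is even for every non-trivial admissible module (see Table~\ref{t:dim}), so $-\Id_{U}\in\SL(n,\R)$, and the required intertwining $A^{\tau}J_{z}A=J_{z}$ is immediate.

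The main obstacle I anticipate is tracking the sign $(-1)^{n}N(\varphi)\in\{\pm 1\}$: strictly speaking, when this sign is $-1$ the right square fails to commute on the nose, so the commutativity must be read up to the $\{\pm 1\}$-ambiguity inherent in the double cover, or equivalently by replacing $\varphi$ with a lift in $\Pin(r,s)$ for which the sign is $+1$. Consequently the surjectivity of $\psi$ requires the small auxiliary step of exhibiting an element of $\C(\n_{r,s}(U))$ acting as $-\Id_{m}$ on the centre, which is the only non-routine piece of bookkeeping in the argument.
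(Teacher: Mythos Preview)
Your overall approach coincides with the paper's: the bottom row is cited as standard, and for surjectivity of $\psi$ you lift $C\in\Orth(r,s)$ to $\varphi\in\Pin(r,s)$ via Proposition~\ref{prop:coverings} and apply $\mathcal{P}$. The paper's proof is literally that one sentence and does not mention the sign $(-1)^{n}N(\varphi)$ at all, so you are already being more careful than the paper.

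However, your proposed repair of the sign problem is circular. First, ``replacing $\varphi$ with a lift for which the sign is $+1$'' does not help: the two lifts of $C$ are $\pm\varphi$, and since $-1\in\Pin(r,s)$ is a product of two unit vectors with $N(-1)=1$, one has $(-1)^{n}N(-\varphi)=(-1)^{n}N(\varphi)$, so both lifts carry the same sign. Second, building an element with centre part $-\Id_{m}$ as $\mathcal{P}(\varphi_{0})$ for some $\varphi_{0}$ with $\widetilde{\Ad}_{\varphi_{0}}=-\Id_{m}$ runs into the identical obstruction: $\psi(\mathcal{P}(\varphi_{0}))=(-1)^{n_{0}}N(\varphi_{0})\,(-\Id_{m})$, and the sign may again be $-1$. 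Concretely, for $(r,s)=(1,0)$ one has $\Pin(1,0)=\{\pm 1,\pm z_{1}\}$ and a direct check gives $\psi(\mathcal{P}(\varphi))=\Id$ for every $\varphi$, so no element of the form $\mathcal{P}(\varphi)$ ever hits $-1\in\Orth(1)$. More generally, for $\varphi=v_{1}\cdots v_{k}$ one has $(-1)^{k}N(\varphi)=\det(\widetilde{\Ad}_{\varphi})\cdot\prod_{i}\langle v_{i},v_{i}\rangle$, which is the pull-back of the character $\det\cdot(\text{spinor norm})$ on $\Orth(r,s)$; this character is nontrivial whenever $r\geq 1$ (evaluate on a reflection in a positive vector), so the image of $\psi\circ\mathcal{P}$ is then a proper subgroup. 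Surjectivity of $\psi$ therefore cannot come from $\mathcal{P}$ alone; one needs an automorphism outside the image of $\mathcal{P}$, and here one runs up against the $\SL$-constraint in the paper's description of $\C(\n)$ (for $\n_{1,0}(\mathbb{R}^{2})$ every $A\in\SL(2,\mathbb{R})$ satisfies $A^{T}J_{z_{1}}A=J_{z_{1}}$, never $-J_{z_{1}}$). In short, the gap you spotted is genuine, and it is shared by the paper's own argument.
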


\begin{proof}
It is well known fact that the second line is a short exact sequence, see, for instance~\cite{LawMich}. Let $C\in\Orth(r,s)$ and $\varphi$ be any element of $\Pin(r,s)$ such that $\widetilde \Ad_{\varphi}=C$. Then to any $C\in \Orth(r,s)$ there is $\psi^{-1}(C)=\mathcal P(\varphi)\in \C(\n_{r,s}(U))$, given by Proposition~\ref{lem:phiArbitrary}. It shows that the map $\psi$ is surjective.
\end{proof}

\begin{lemma}\label{lem:semidirect} Let
\begin{equation}\label{eq:exactGen}
\{\Id\}\ \ \longrightarrow\ \ N\ \ {\overset {\iota }{\longrightarrow}} \ \ G\ \  {\overset {\psi }{\longrightarrow}} \ \ H\ \ \longrightarrow\ \ \{\Id\}
\end{equation}
be a short exact sequence of groups.
We assume that $K$ is a subgroup in $G$ such that $\psi\vert_{K}$ is surjective. Then there is a group homomorphism $\rho\colon N\rtimes_{\phi}K\to G$ with $
\ker \rho=\{(n,n^{-1})\mid\ n\in K\cap N\}$.
\end{lemma}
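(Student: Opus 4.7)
The plan is to use the standard factorization of any $g \in G$ as a product of an element of $\iota(N)$ and an element of $K$, where the relevant action $\phi$ is the conjugation action that is implicitly available to us.

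First I would unpack what action $\phi\colon K\to\Aut(N)$ must be. Since $\iota(N)=\ker\psi$ is normal in $G$ and $K\subset G$, conjugation by $k\in K$ preserves $\iota(N)$; using injectivity of $\iota$ we obtain a well-defined homomorphism
\[
\phi_k(n):=\iota^{-1}\bigl(k\,\iota(n)\,k^{-1}\bigr),\qquad k\in K,\ n\in N,
\]
and this is the action that makes $N\rtimes_\phi K$ meaningful in our setting. With $\phi$ fixed, I would define the candidate
\[
\rho\colon N\rtimes_\phi K\longrightarrow G,\qquad \rho(n,k):=\iota(n)\cdot k.
\]

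Next I would verify that $\rho$ is a group homomorphism: using the semidirect product law $(n_1,k_1)(n_2,k_2)=(n_1\,\phi_{k_1}(n_2),\,k_1 k_2)$,
\[
\rho\bigl((n_1,k_1)(n_2,k_2)\bigr)
=\iota(n_1)\,\iota(\phi_{k_1}(n_2))\,k_1 k_2
=\iota(n_1)\,k_1\,\iota(n_2)\,k_1^{-1}\,k_1 k_2
=\rho(n_1,k_1)\rho(n_2,k_2),
\]
which is immediate from the definition of $\phi$. Finally, to identify the kernel: $\rho(n,k)=\Id$ forces $\iota(n)=k^{-1}$, so $k^{-1}\in\iota(N)\cap K$; conversely any $n\in N$ with $\iota(n)\in K$ (i.e.\ with $n\in N\cap K$ under the identification $\iota$) produces an element $(n,n^{-1})$ in $\ker\rho$. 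This is exactly $\{(n,n^{-1})\mid n\in K\cap N\}$.

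There is no real obstacle here; the statement is essentially the universal fact that a subgroup $K$ of an extension that surjects onto the quotient assembles with the kernel into a semidirect product mapping onto $G$, with the failure of the map to be an isomorphism measured precisely by the overlap $K\cap N$. The only mild subtlety is bookkeeping around the identification of $N$ with $\iota(N)\subseteq G$, so I would be careful to state once at the outset that $K\cap N$ is shorthand for $\iota^{-1}(K\cap\iota(N))$, after which everything reduces to the three lines above. Surjectivity of $\rho$ is not asserted in the statement, so I would not need to discuss it; it only holds when $\psi|_K$ has a set-theoretic splitting lying in $K$, which is guaranteed in the application (Proposition~\ref{automorohisms from Clifford algebra}) by Proposition~\ref{prop:coverings}.
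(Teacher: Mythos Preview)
Your proof is correct and essentially identical to the paper's: the same conjugation action $\phi$, the same map $\rho(n,k)=\iota(n)\,k$, and the same short verification of the homomorphism property and kernel. One correction to your closing remark: surjectivity of $\rho$ \emph{does} follow directly from the hypothesis that $\psi|_K$ is surjective (given $g\in G$, choose $k\in K$ with $\psi(k)=\psi(g)$; then $gk^{-1}\in\ker\psi=\iota(N)$, so $g=\iota(n)k$ for some $n$), with no splitting needed---and the paper in fact asserts and uses this surjectivity.
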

\begin{proof}
Since $N$ is a normal subgroup of $G$, the subgroup $K$ acts on $N$ by conjugation
\[
\phi\colon K\to \Aut(N),\quad  \phi_{k}(n)=knk^{-1},\quad\text{for}\quad n\in N,\  k\in K.
\]
Then we have a surjective group homomorphism
\[
\rho:N\rtimes_{\phi} K\ni (n,k)\mapsto nk\in G,
\]
In fact
$
\rho( (n,k)\cdot (n',k'))=
nkn'k^{-1}kk'=
\rho((n,k))\rho((n',k'))$.
The kernel of $\rho$
is 
\[
\ker \rho=\{(n,k)\mid\ nk=e,\ n\in N,\ k\in K\}
=\{(n,n^{-1})\mid\ n\in K\cap N\},
\]
where $e$ is the unit element in $G$. Consequently,
  $(N\rtimes_{\phi} K)/\ker \rho\cong G$.
\end{proof} 

We set $G=\C(\n_{r,s}(U))$, $K=\mathcal P\big(\Pin(r,s)\big)$ and $N=\Aut^0(\n_{r,s}(U))$ in Lemma~\ref{lem:semidirect}. The kernel $\rho$ consists of $\Phi\in
\Aut^0(\n_{r,s}(U))\cap \mathcal P\big(\Pin(r,s)\big)$.
Now we determine the order of $\Aut^0(\n_{r,s}(U))\cap\mathcal P\big(\Pin(r,s)\big)$ for different types of admissible modules and all the pairs $(r,s)$. 

\begin{theorem}\label{th:kernel} In the notations above, we have 

$(1)\ \Aut^0(\n_{r,s}(U))\cap \mathcal P\big(\Pin(r,s)\big)=
\{\pm\Id\oplus \Id\}$
in the following cases
\begin{itemize}
\item[(1a)]{$r$ is even, $s$ is arbitrary;}
\item[(1b)]{$r=1\mod 4$, $s=1,2 \mod 4$;}
\item[(1c)]{$r=3\mod 4$, $s=0,3 \mod 4$ and the admissible module is isotypic;}
\end{itemize}

$(2) \ \Aut^0(\n_{r,s}(U))\cap \mathcal P\big(\Pin(r,s)\big)=\{
\pm\Id\oplus\Id;\ 
\pm J_{\Omega^{r,s}}\oplus\Id\}$
in the following cases
\begin{itemize}
\item[(2a)]{$r=1\mod 4$, $s=0,3 \mod 4$;}
\item[(2b)]{$r=3\mod 4$, $s=1,2 \mod 4$;}
\item[(2c)]{$r=3\mod 4$, $s=0,3 \mod 4$ and the admissible module is non-isotypic.}
\end{itemize}
\end{theorem}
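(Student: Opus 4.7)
The plan is to reduce membership in $\Aut^0(\n_{r,s}(U))\cap\mathcal P(\Pin(r,s))$ to a short list of candidates in $\Pin(r,s)$ and then decide when two of those candidates collapse onto the other two. First I would translate the condition $\mathcal P(\varphi)\in\Aut^0(\n_{r,s}(U))$: by Proposition~\ref{lem:phiArbitrary}, it is equivalent to the lower block of $\mathcal P(\varphi)$ being trivial, i.e.\ $(-1)^nN(\varphi)\widetilde\Ad_\varphi=\Id_{\R^{r,s}}$. Since the scalar factor $(-1)^nN(\varphi)=\pm 1$, this forces $\widetilde\Ad_\varphi=\pm\Id_{\R^{r,s}}$. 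By Proposition~\ref{prop:coverings}, $\ker\widetilde\Ad=\{\pm 1\}$, and a short calculation from $z_i\Omega^{r,s}=(-1)^{r+s-1}\Omega^{r,s}z_i$ together with the sign from $\alpha$ yields $\widetilde\Ad_{\Omega^{r,s}}=-\Id$. Hence the only candidates are $\varphi\in\{\pm 1,\pm\Omega^{r,s}\}$.

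Next I would evaluate $\mathcal P$ on these candidates. Immediately $\mathcal P(\pm 1)=\pm\Id_U\oplus\Id$. For $\varphi=\Omega^{r,s}$ one has $n=r+s$ and $N(\Omega^{r,s})=\prod_{i=1}^{r+s}\langle z_i,z_i\rangle_{r,s}=(-1)^s$, so the lower block of $\mathcal P(\Omega^{r,s})$ equals $(-1)^{r+s}(-1)^s(-\Id)=(-1)^{r+1}\Id$, which is $\Id$ if and only if $r$ is odd. Consequently, when $r$ is even (case (1a)) the candidates $\pm\Omega^{r,s}$ drop out and the intersection is $\{\pm\Id\oplus\Id\}$. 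When $r$ is odd, the four candidates contribute the elements $\pm\Id_U\oplus\Id$ and $\pm J_{\Omega^{r,s}}\oplus\Id$ to the intersection; the orthogonality condition $A^\tau J_zA=J_z$ at $A=J_{\Omega^{r,s}}$ is automatic, because $\mathcal P(\Omega^{r,s})\in\C(\n_{r,s}(U))$ and its centre block is $\Id$.

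In the odd-$r$ regime the question becomes purely: when is $J_{\Omega^{r,s}}=\pm\Id_U$? I would split along $r-s\bmod 4$. If $r-s\equiv 3\pmod 4$, the volume form acts by $\pm\Id$ on each irreducible $V^{r,s}_{irr;\pm}$. On an isotypic admissible $U$ all summands carry a common sign, so $J_{\Omega^{r,s}}=\pm\Id_U$ and the four candidates collapse to $\{\pm\Id\oplus\Id\}$, giving case (1c); on a non-isotypic $U$ the two irreducible types contribute opposite signs, $J_{\Omega^{r,s}}$ fails to be scalar, and the four elements are genuinely distinct, giving case (2c). If $r-s\not\equiv 3\pmod 4$ while $r$ is odd, one shows $J_{\Omega^{r,s}}$ is never scalar: when $r+s$ is even, $\Omega^{r,s}$ anticommutes with each $z_j$, so a scalar $J_{\Omega^{r,s}}=c\Id$ would give $cJ_{z_j}=-cJ_{z_j}$, forcing $c=0$; when $r+s$ is odd one uses the value $J_{\Omega^{r,s}}^2=(-1)^{n(n+1)/2+s}\Id$ together with Schur's lemma on $V^{r,s}_{irr}$ to rule out $\pm\Id$. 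This covers cases (2a) and (2b).

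The most delicate part is case (1b), $r\equiv 1\pmod 4$ with $s\equiv 1$ or $2\pmod 4$, where $r$ is odd and yet the theorem still asserts that the intersection is $\{\pm\Id\oplus\Id\}$. The main obstacle is the refinement modulo $8$: one has to invoke the sub-cases of Proposition~\ref{r-s = 0,1,2 mod 4} that restrict which admissible $U$ actually occur (for $s\equiv 2\pmod 4$, part of the $(r,s)\bmod 8$ range admits only isotypic admissible modules, which collapses the candidates by the (1c) argument), and for the remaining sub-case one analyses the admissibility sign data on the common $1$-eigenspace $E_{r,s}$ so that the factor $(-1)^{r+1}$ and the action of $J_{\Omega^{r,s}}$ conspire to identify $\mathcal P(\Omega^{r,s})$ with one of $\pm\Id_U\oplus\Id$. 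Carrying out this bookkeeping cleanly and uniformly across the sub-cases of (1b) is where the real work of the theorem is concentrated.
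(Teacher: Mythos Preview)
Your strategy coincides with the paper's: isolate the candidates $\varphi\in\{\pm\mathbf 1,\pm\Omega^{r,s}\}$, show that $\mathcal P(\Omega^{r,s})$ lands in $\Aut^0$ exactly when $r$ is odd, and then determine when $J_{\Omega^{r,s}}=\pm\Id_U$. For (1a) and (2a)--(2c) your reasoning mirrors the paper's. The divergence is in (1b) (and in the $s\equiv 3\pmod 4$ half of (1c)). The paper dispatches these in one line: it records from~\eqref{eq:Omega2} that $(\Omega^{r,s})^2=1$ in those cases and immediately concludes $J_{\Omega^{r,s}}=\pm\Id$, having noted for $r\equiv 1\pmod 4$ that ``all admissible modules are isotypic''. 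There is no $\bmod\,8$ refinement, no appeal to Proposition~\ref{r-s = 0,1,2 mod 4}, and no analysis of $E_{r,s}$.

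Your treatment of (1b) is both more elaborate than the paper's and internally inconsistent. Earlier you argue---correctly---that when $r+s$ is even the volume form anticommutes with each $z_j$, so $J_{\Omega^{r,s}}$ cannot be a scalar. But the sub-case $r\equiv 1$, $s\equiv 1\pmod 4$ of (1b) and the sub-case $r\equiv 3$, $s\equiv 3\pmod 4$ of (1c) both have $r+s$ even; by your own argument the four elements $\{\pm\Id\oplus\Id,\ \pm J_{\Omega^{r,s}}\oplus\Id\}$ are genuinely distinct there (for instance at $(r,s)=(1,1)$ one has $J_{\Omega^{1,1}}=J_{z_1}J_{z_2}\neq\pm\Id$ on $V_{\min}^{1,1}$), contrary to the asserted conclusion. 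No amount of $\bmod\,8$ bookkeeping on admissible modules can repair this, since the non-scalarity of $J_{\Omega^{r,s}}$ is intrinsic to the Clifford action and independent of the choice of admissible scalar product on $U$. The gap in your proposal is therefore not a missing trick for (1b): your anticommutation argument is sound, and the tension is with the paper's one-line inference $(\Omega^{r,s})^2=1\Rightarrow J_{\Omega^{r,s}}=\pm\Id$, which is only justified when $\Omega^{r,s}$ is central (i.e.\ $r+s$ odd). Drop the $\bmod\,8$ detour and confront that inference directly.
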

\begin{proof}
To prove the theorem we need to find $\phi\in\Pin(r,s)$ such that
$$
\Psi(\phi)=(-1)^nN(\phi)\widetilde{\Ad}_{\phi}=\Id.
$$
Then $\pm J_{\phi}\oplus \Id$ will belong to $\Aut^0(\n_{r,s}(U))\cap \mathcal P\big(\Pin(r,s)\big)$.
Note that $\Psi(\pm\mathbf 1)=\Id$. We also note that 
$$
N(\Omega^{r,s})=\alpha(\Omega^{r,s})(\Omega^{r,s})^T=(-1)^s
\quad\text{and}\quad
z\Omega^{r,s}=(-1)^{r+s-1}\Omega^{r,s}z\ \text{for any}\ z\in\mathbb R^{r,s}.
$$
Thus
$$
(-1)^{r+s}N(\Omega^{r,s})\widetilde\Ad_{\Omega^{r,s}}z=(-1)^{2r+4s}
(-1)^{r-1}z\Omega^{r,s}(\Omega^{r,s})^{-1}=(-1)^{r-1}z
$$
Hence $\Psi (\Omega^{r,s})=\Id$ for odd values of $r$ and arbitrary values of $s$. Thus, if $r$ is even then for arbitrary $s$ the only elements in $\Aut^0(\n_{r,s}(U))\cap \mathcal P\big(\Pin(r,s)\big)$ are $\pm\Id\oplus\Id$. Moreover, in this case all the modules are isotypic. This shows $(1a)$.

Before we proceed, we remind some properties of the volume form:
\begin{equation}\label{eq:Omega2}
(\Omega^{r,s})^2=
\begin{cases}
(-1)^s,\quad&\text{if}\quad r+s=3,4\mod 4,
\\
(-1)^{s+1},\quad&\text{if}\quad r+s=1,2\mod 4;
\end{cases}
\end{equation}
We need to check the values $r=1,3\mod 4$.
\\

Let $r=1\mod 4$. In this case all admissible modules are isotypic. Moreover~\eqref{eq:Omega2} implies
\begin{equation*}\label{eq:Om2}
(\Omega^{r,s})^2=
\begin{cases}
1,\quad&\text{if}\quad s=1,2\mod 4,
\\
-1,\quad&\text{if}\quad s=0,3\mod 4;
\end{cases}
\end{equation*}
Thus, if $r=1\mod 4$ and $s=1,2\mod 4$, then we have $J_{\Omega^{r,s}}=\pm\Id$ and it proves (1b). In the case $r=1\mod 4$ and $s=0,3\mod 4$ we obtain (2a).
\\

Let $r=3\mod 4$. In this cases we need to distinguish isotypic and non-isotypic admissible modules. The property~\eqref{eq:Omega2} implies
\begin{equation*}\label{eq:Om2}
(\Omega^{r,s})^2=
\begin{cases}
1,\quad&\text{if}\quad s=0,3\mod 4,
\\
-1,\quad&\text{if}\quad s=1,2\mod 4;
\end{cases}
\end{equation*}
Thus if $r=3\mod 4$ and $s=1,2\mod 4$ we obtain (2b). If $r=3\mod 4$, $s=0,3\mod 4$ and module is isotypic then $J_{\Omega^{r,s}}=\pm\Id$, that shows (1c). In the case $r=3\mod 4$, $s=0,3\mod 4$ with a non-isotypic module we obtain (2c).

At the end we notice that the cases~(1a), (1c), and (2c) contains the result of~\cite{Saal}. 
\end{proof}

We conclude that any element of 
$\C(n_{r,s}(U))$ has the form
$AJ_{\varphi}\oplus (-1)^n N(\varphi)\widetilde{\Ad}_{\varphi}$.
Thus the only thing that we need to define is the subgroup $\mathbb A\in \SL(n,\R)$ containing maps $A$ such that \begin{equation}\label{isomorphism relation}
A^\tau J_z A=J_z\quad\text{for all}\quad z\in\mathbb R^{r,s}. 
\end{equation}  
 
\subsection{Relation between the structure of involutions $PI_{r,s}$ and $\Aut^0\big(\n_{r,s}(U)\big)$}


In this section we show that the isomorphism group is closely related to the structure of the set of involutions $PI_{r,s}^*$ of types (1)-(3) and it is defined by its behaviour on the common 1-eigenspace $E_{r,s}^*$.
We give, in addition to~\eqref{isomorphism relation}
some relations between the automorphism $A$ and the Clifford actions $J_z$ that we use in the present work. The proof follows from~\eqref{isomorphism relation} by induction and can be found in~\cite[Lemma 3]{FM1} for the product of any number of $J_{z_k}$.

\begin{lemma}\label{relation between volume form}
Let $\{z_i\}_{i=1}^{r+s}$ be an orthonormal basis for
$\mathbb{R}^{r,s}$
and let $\Phi=A\oplus \Id\in\Aut^0(n_{r,s}(U))$. 
Then the following relations hold:
\begin{equation}
AJ_{z_k}J_{z_l}=J_{z_k}J_{z_l}A,\qquad
AJ_{z_k}J_{z_l}J_{z_m}J_{z_n}=J_{z_k}J_{z_l}J_{z_m}J_{z_n}A.
\end{equation}
\begin{equation}
A^{\tau}J_{z_j}J_{z_k} J_{z_l}  A=J_{z_j}J_{z_k}J_{z_l}.
\end{equation}
\end{lemma}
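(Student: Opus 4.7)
The plan is to bootstrap everything from the defining identity $A^{\tau} J_z A = J_z$ of \eqref{isomorphism relation}. The first step---and the one that makes the rest automatic---is to extract the companion identity $A J_z A^{\tau} = J_z$. For a basis vector $z = z_i$ we have $\la z_i, z_i\ra_{r,s} = \pm 1$, so $J_{z_i}$ is invertible with $J_{z_i}^{-1} = -J_{z_i}/\la z_i, z_i\ra_{r,s}$. Inverting both sides of $A^{\tau} J_{z_i} A = J_{z_i}$ and substituting this formula for $J_{z_i}^{-1}$ yields $A^{-1} J_{z_i} A^{-\tau} = J_{z_i}$, which rearranges to $A J_{z_i} A^{\tau} = J_{z_i}$. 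Since $J_z$ depends linearly on $z$, this extends to every $z \in \mathbb{R}^{r,s}$.

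With both $A^{\tau} J_z A = J_z$ and $A J_z A^{\tau} = J_z$ (equivalently $A^{-\tau} J_z A^{-1} = J_z$) in hand, I would prove the two-factor commutation by inserting $A^{\tau} A^{-\tau} = \Id$ between the Clifford actions:
\begin{equation*}
A J_{z_k} J_{z_l} A^{-1} = (A J_{z_k} A^{\tau})(A^{-\tau} J_{z_l} A^{-1}) = J_{z_k} J_{z_l}.
\end{equation*}
Repeating the same pairing three times gives $A J_{z_k} J_{z_l} J_{z_m} J_{z_n} A^{-1} = J_{z_k} J_{z_l} J_{z_m} J_{z_n}$, so $A$ commutes with any product of an even number of Clifford actions.

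For the three-factor identity I would split off one factor and absorb the rest using the two-factor commutation already established:
\begin{equation*}
A^{\tau} J_{z_j} J_{z_k} J_{z_l} A = (A^{\tau} J_{z_j} A)(A^{-1} J_{z_k} J_{z_l} A) = J_{z_j} \cdot J_{z_k} J_{z_l},
\end{equation*}
since $A^{-1}(J_{z_k} J_{z_l}) A = J_{z_k} J_{z_l}$. The same split-off-and-absorb trick, combined with an induction on parity, recovers the general statement of \cite[Lemma 3]{FM1} for products of arbitrary length. I do not anticipate any real obstacle; the only subtle point is the invertibility step, which is automatic because the orthonormal basis vectors all satisfy $\la z_i, z_i\ra_{r,s} = \pm 1 \neq 0$.
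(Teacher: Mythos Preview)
Your proof is correct and follows essentially the same approach the paper indicates: the paper simply remarks that the relations follow from \eqref{isomorphism relation} by induction and refers to \cite[Lemma 3]{FM1}, and your argument is a clean, explicit implementation of exactly that idea. The only thing worth noting is that your companion identity $A J_z A^{\tau}=J_z$ is precisely the inductive bootstrap the paper has in mind, and from there everything reduces to inserting $\Id$ in the right places, just as you do.
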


\begin{lemma}\label{lem:2to1}
Let $\{z_i\}_{i=1}^{r+s}$ be an orthonormal basis for
$\mathbb{R}^{r,s}$ and $U$ an admissible module. If a linear map $A\colon U\to U$ satisfies the conditions
\begin{equation}\label{eq:k0}
\begin{array}{ll}
&A^{\tau}J_{z_{k_0}}A=J_{z_{k_0}}\quad\text{for one index}\quad {k_0}\in\{1,\ldots r+s\},
\quad\text{and}
\\
&AJ_{z_{k_0}}J_{z_l}=J_{z_{k_0}}J_{z_l}A \quad\text{for all indices}\quad l=1,\ldots r+s,
\end{array}
\end{equation}
then 
$\Phi=A\oplus \Id\in\Aut^0(n_{r,s}(U))$.
\end{lemma}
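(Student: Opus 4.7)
The plan is to verify that $A^{\tau} J_z A = J_z$ holds for every $z \in \mathbb{R}^{r,s}$; combined with the invertibility of $A$ (which will drop out along the way), this exactly says that $\Phi = A \oplus \Id$ preserves the bracket defined through~\eqref{def of skew-symmetric bi-linear form} and acts trivially on the centre, so $\Phi \in \Aut^0(\n_{r,s}(U))$. By bilinearity of $z \mapsto J_z$, it suffices to check the identity on the orthonormal basis $\{z_1,\dots,z_{r+s}\}$. For the index $k_0$ this is the first hypothesis in~\eqref{eq:k0}; invertibility follows at once by taking determinants, since $J_{z_{k_0}}^2 = \mp \Id$ is a nonzero scalar and $A^{\tau} J_{z_{k_0}} A = J_{z_{k_0}}$ forces $(\det A)^2 = 1$.

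The main point is to deduce $A^{\tau} J_{z_l} A = J_{z_l}$ for every $l \neq k_0$ by using the commutation hypothesis to pull $A$ through the composite $J_{z_{k_0}} J_{z_l}$. Setting $\varepsilon = -\langle z_{k_0}, z_{k_0}\rangle_{r,s} \in \{\pm 1\}$, so that $J_{z_{k_0}}^2 = \varepsilon \Id$ and $J_{z_{k_0}}^{-1} = \varepsilon J_{z_{k_0}}$, I would write the identity
\[
J_{z_l} = \varepsilon J_{z_{k_0}} \bigl(J_{z_{k_0}} J_{z_l}\bigr)
\]
and then compute
\[
A^{\tau} J_{z_l} A = \varepsilon\, A^{\tau} J_{z_{k_0}} \bigl(J_{z_{k_0}} J_{z_l}\bigr) A = \varepsilon\, A^{\tau} J_{z_{k_0}} A \bigl(J_{z_{k_0}} J_{z_l}\bigr) = \varepsilon\, J_{z_{k_0}} \bigl(J_{z_{k_0}} J_{z_l}\bigr) = \varepsilon^2 J_{z_l} = J_{z_l},
\]
where the second equality uses the second condition of~\eqref{eq:k0} (namely that $A$ commutes with $J_{z_{k_0}} J_{z_l}$, so one may slide $A$ past this composite), and the third equality uses the first condition of~\eqref{eq:k0}.

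I do not expect a serious obstacle; the argument is a short algebraic manipulation, and the only subtlety is to recognize the correct factorization of $J_{z_l}$ which lets the single-index hypothesis and the composite-commutation hypothesis combine cleanly. Once the identity is checked on every basis vector, linearity gives $A^{\tau} J_z A = J_z$ for all $z \in \mathbb{R}^{r,s}$, which in turn says $\langle J_z A u, A v\rangle_U = \langle z, [u,v]\rangle_{r,s}$ for all $u, v \in U$ and $z \in \mathbb{R}^{r,s}$, i.e.\ $[Au, Av] = [u,v]$. Combined with $\det A = \pm 1$ this yields $\Phi = A \oplus \Id \in \Aut^0(\n_{r,s}(U))$, concluding the proof.
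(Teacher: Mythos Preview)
Your proof is correct and follows essentially the same approach as the paper: factor $J_{z_l}$ as $\pm J_{z_{k_0}}(J_{z_{k_0}}J_{z_l})$, slide $A$ past the composite using the commutation hypothesis, and then invoke $A^{\tau}J_{z_{k_0}}A=J_{z_{k_0}}$. You are slightly more explicit than the paper in tracking the sign $\varepsilon$ and in noting invertibility of $A$, but the core manipulation is identical.
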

\begin{proof}
We only need to show~\eqref{isomorphism relation} for all $z=z_l$ for $l=1,\ldots r+s$. Let~\eqref{eq:k0} is fulfilled.  
Then
$$
A^{\tau}J_{z_l}A=\pm A^{\tau}J_{z_{k_0}}J_{z_{k_0}}J_{z_l}A=\pm A^{\tau}J_{z_{k_0}}AJ_{z_{k_0}}J_{z_l}=\pm J_{z_{k_0}}^2J_{z_l}=J_{z_l}.
$$
\end{proof}

\begin{corollary}
$\Phi=A\oplus\Id\in \Aut^0(n_{r,s}(U))$ if and only if~\eqref{eq:k0} holds.
\end{corollary}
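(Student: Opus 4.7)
The plan is to observe that this corollary is an immediate two-way consequence of two already-established facts: Lemma~\ref{lem:2to1} gives one direction, and Lemma~\ref{relation between volume form} together with the definition~\eqref{eq:Aut0} gives the other.

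For the ``if'' direction, there is nothing to do: the statement that \eqref{eq:k0} implies $\Phi = A\oplus \Id \in \Aut^0(\n_{r,s}(U))$ is exactly Lemma~\ref{lem:2to1}.

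For the ``only if'' direction, I would suppose $\Phi = A\oplus \Id \in \Aut^0(\n_{r,s}(U))$ and read off \eqref{eq:k0} from the definition of the group and Lemma~\ref{relation between volume form}. First, by~\eqref{eq:Aut0}, $A^\tau J_z A = J_z$ for every $z\in\mathbb{R}^{r,s}$. Specialising $z = z_{k_0}$ for the chosen index $k_0\in\{1,\ldots,r+s\}$ yields the first equality of~\eqref{eq:k0}. Second, Lemma~\ref{relation between volume form} asserts that $A J_{z_k} J_{z_l} = J_{z_k} J_{z_l} A$ for all pairs of basis indices $k,l$; fixing $k = k_0$ and letting $l$ range over $\{1,\ldots,r+s\}$ gives the second line of~\eqref{eq:k0}.

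There is essentially no obstacle here: the content of the corollary is to package the characterisation of $\Aut^0(\n_{r,s}(U))$ more economically, replacing the requirement $A^\tau J_z A = J_z$ for \emph{all} $z$ by the much weaker requirement $A^\tau J_{z_{k_0}} A = J_{z_{k_0}}$ for a \emph{single} index $k_0$, at the cost of imposing the commutation relations $A J_{z_{k_0}} J_{z_l} = J_{z_{k_0}} J_{z_l} A$. The non-trivial input is the reduction proved in Lemma~\ref{lem:2to1}, which uses $J_{z_{k_0}}^2 = \pm \Id$ to bootstrap from one index to all indices; once this is in hand, the present corollary is simply a restatement.
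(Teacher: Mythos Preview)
Your proof is correct and matches the paper's intended argument: the paper states the corollary without proof precisely because it is the immediate combination of Lemma~\ref{lem:2to1} (the ``if'' direction) with the definition~\eqref{eq:Aut0} and Lemma~\ref{relation between volume form} (the ``only if'' direction), exactly as you have written.
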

Let $(V^{r,s}_{min}, J)$ be a minimal admissible module of $\Cl_{r,s}$.
Let $P\colon V^{r,s}_{min}\to V^{r,s}_{min}$ be an involution from the set $PI_{r,s}^*$ that is the product of four generators. 
We denote by $E^{k}_{P}$, $k\in\{1,-1\}$
the eigenspace of the involution $P$ with the eigenvalue $k=\pm 1$.
In order to denote the intersection of
eigenspaces of several  
involutions $P_l\in PI_{r,s}^*$, $l=1,\ldots,N=\#(PI_{r,s}^{*})$, we use the multi-index 
$I=(k_1,\ldots,k_N), ~k_{l}=\pm 1$ and 
write $E^I=\cap_{l=1}^{N}E^{k_l}_{P_l}$. 
Assume that 
$\Phi=A\oplus \Id\in\Aut^0(\n_{r,s}(V_{min}^{r,s}))$. Then

\noindent 1. $A=\oplus A_{I}$, 
where $A_{I}\colon E^{I}\to E^{I}$ for any choice of $I=(k_1,\ldots,k_N)$;

\noindent 2. if $J_{z_j},\ J_{z_j}J_{z_k},\  J_{z_j}J_{z_k}J_{z_m} \colon E^{I}\to E^{I}$ for some $I$, then 
$$
A_{I}J_{z_j}(x_I)=J_{z_j}(A^{\tau}_{I})^{-1}(x_{I}),\quad
A_{I}J_{z_j}J_{z_k}J_{z_m}(x_I)=J_{z_j}J_{z_k}J_{z_m}(A^{\tau}_{I})^{-1}(x_{I})
$$
$$
A_{I}J_{z_j}J_{z_k}(x_I)=J_{z_j}J_{z_k}A_{I}(x_I), 
$$
and for the transposed operators
$$ 
A^{\tau}_{I}J_{z_j}(x_I)=J_{z_j} (A_{I})^{-1}(x_I), \quad
A^{\tau}_{I}J_{z_j}(x_I)J_{z_k}J_{z_m}=J_{z_j}J_{z_k}J_{z_m} (A_{I})^{-1}(x_I),
$$
$$
A^{\tau}_{I}J_{z_j}J_{z_k}(x_I)=J_{z_j}J_{z_k} A^{\tau}_{I}(x_I).
$$

\begin{proof}
The first statement follows from the fact that $AP_l=P_l A$ for all $l=1,\ldots,N$.

The second statement is the direct consequence of~\eqref{isomorphism relation} and Lemma~\ref{relation between volume form}.
\end{proof}

Thus the  
construction of the map $A\colon V^{r,s}_{min}\to V^{r,s}_{min}$ can be reduced to the
construction of 
the maps $A_I\colon E^I\to E^I$ and setting $A=\oplus A_I$. Theorem~\ref{th:general} states that, under some conditions, the construction of all maps $A_I$ can be obtained from the map $A_1\colon E^1\to E^1$, where we denote $E^1=\bigcap_{l=1}^{N}E^{1}_{P_l}$. Note that $E^1$ it is exactly the subspace $E_{r,s}^*\subset V^{r,s}_{min}$ that is the common $1$-eigenspace of involutions from $PI_{r,s}^*$ that are of types (1)-(3).

\begin{theorem}\label{th:general} Under the previous notations we assume that 

\noindent $(a)$ there are maps $G_I\colon E^1\to E^I$ for all multi-indices $I$ of the form either $G_I=J_{z_i}$ or $J_{z_i}J_{z_k}$ for some $i,k=1,\ldots,r+s$, and

\noindent $(b)$ there exists a linear map $A_1\colon E^1\to E^1$ such that if $J_{z_j}, J_{z_j}J_{z_k}, J_{z_j}J_{z_k}J_{z_m}\colon E^1\to E^1$, then the map $A_1$ satisfies
\begin{equation}\label{eq:A11}
A_{1}J_{z_j}=J_{z_j}(A^{\tau}_{1})^{-1},\quad 
A_{1}J_{z_j}J_{z_k}J_{z_m}=J_{z_j}J_{z_k}J_{z_m}(A^{\tau}_{1})^{-1}
\end{equation}
and the same for any other product of odd number of generators $J_{z_l}$, leaving the space $E^1$ invariant; and also
\begin{equation}\label{eq:A111}
A_{1}J_{z_j}J_{z_k}=J_{z_j}J_{z_k}A_1
\end{equation}
and the same for any other product of even number of generators $J_{z_l}$, leaving the space $E^1$ invariant.

Then the map $A\colon V^{r,s}_{min}\to V^{r,s}_{min}$, $A=\oplus A_{I}$ with
$A_I\colon E^I\to E^I$ such that 
\begin{equation}~\label{eq:AII}
A_I=
\begin{cases}
G_I(A_1^{-1})^{\tau}G_I^{-1},\ \ &\text{if}\ \  G_I=J_{z_i}\ \ \text{for some}\ \ i=1,\ldots,r+s, 
\\
G_IA_1G_I^{-1},\ \ &\text{if}\ \  G_I=J_{z_i}J_{z_k}\ \ \text{for some}\ \ i,k=1,\ldots,r+s.
\end{cases}
\end{equation}
uniquely defines the automorphism $\Phi=A\oplus \Id\in\Aut^0(n_{r,s}(V_{min}^{r,s}))$.
\end{theorem}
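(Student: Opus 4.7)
My plan is to verify that the formula~\eqref{eq:AII} defines each $A_I$ as a well-defined linear bijection of $E^I$, and then that the block-diagonal operator $A=\oplus_I A_I$ satisfies $A^{\tau}J_zA=J_z$ for every $z\in\mathbb R^{r,s}$, which by~\eqref{eq:Aut0} is precisely the requirement for $\Phi=A\oplus\Id$ to lie in $\Aut^0(\n_{r,s}(V^{r,s}_{min}))$. Well-definedness is immediate: each $G_I$ is a product of Clifford generators, hence invertible on $V^{r,s}_{min}$, and by hypothesis $(a)$ we have $G_I\colon E^1\to E^I$ and thus $G_I^{-1}\colon E^I\to E^1$. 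Since the admissible scalar product restricts non-degeneratly to $E^1$, the map $A_1$ is invertible (the relations~\eqref{eq:A11}--\eqref{eq:A111} together with the injectivity forced by the Clifford action force this), and both $A_1$ and $(A_1^{-1})^{\tau}$ preserve $E^1$. Hence in either case of~\eqref{eq:AII} the composition lands in $E^I$ and is bijective, so $A_I$ is a well-defined linear automorphism of $E^I$.

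For the main verification that $\Phi\in\Aut^0$, I would invoke the Corollary after Lemma~\ref{lem:2to1}: it is enough to check, for a single chosen index $k_0$, the two identities $A^{\tau}J_{z_{k_0}}A=J_{z_{k_0}}$ and $AJ_{z_{k_0}}J_{z_l}=J_{z_{k_0}}J_{z_l}A$ for all $l$. Both identities respect the block decomposition $V^{r,s}_{min}=\oplus_I E^I$, since $J_{z_{k_0}}$ sends $E^I$ to some single $E^{I'}$ and $J_{z_{k_0}}J_{z_l}$ sends $E^I$ to some single $E^{I''}$ (the eigenvalues under each $P_j\in PI_{r,s}^*$ just flip, according to whether $z_{k_0}$, resp.\ $z_{k_0},z_l$, appear in the four-fold product defining $P_j$). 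The verification thereby reduces to pairwise identities between $A_I$ and $A_{I'}$ (resp.\ $A_{I''}$). Writing an arbitrary $x\in E^I$ as $x=G_I y$ with $y\in E^1$, the operator $J_{z_{k_0}}G_I$ is a product of at most three Clifford generators and therefore factors, up to sign, as $G_{I'}\circ H$ where $H\in\{\pm\Id,\pm J_{z_j},\pm J_{z_j}J_{z_k},\pm J_{z_j}J_{z_k}J_{z_m}\}$ is an operator that leaves $E^1$ invariant. After this factorization the pairwise identity between $A_I$ and $A_{I'}$ collapses to a relation between $A_1$ and $H$ on $E^1$, which is exactly one of the hypotheses~\eqref{eq:A11}--\eqref{eq:A111} (for odd $H$) or one of their consequences obtained by taking inverses/transposes (for even $H$). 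The same reduction handles the commutation with $J_{z_{k_0}}J_{z_l}$.

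The main obstacle will be the careful bookkeeping of signs and of the two alternatives in~\eqref{eq:AII}. The split between $(A_1^{-1})^{\tau}$ (when $G_I=J_{z_i}$) and $A_1$ (when $G_I=J_{z_i}J_{z_k}$) mirrors the parity change of Clifford products under transposition, since $J_{z_j}^{\tau}=-J_{z_j}$. Consequently one must check that all four possible combinations of parities -- of $G_I$, of $G_{I'}$, and of the bridge $G_{I'}^{-1}J_{z_{k_0}}G_I$ acting on $E^1$ -- assemble consistently; this is where hypotheses~\eqref{eq:A11}--\eqref{eq:A111} are tailor-made, treating odd and even Clifford products differently in exactly the way demanded by the formula. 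A further small point is to check that when $E^I$ admits more than one choice of $G_I$ (e.g.\ both some $J_{z_i}$ and some $J_{z_j}J_{z_k}$ map $E^1$ onto it), the two formulas give the same $A_I$; this again follows from~\eqref{eq:A11}--\eqref{eq:A111}, since the two choices differ by an operator $J_{z_a}J_{z_b}J_{z_c}$ that preserves $E^1$. Uniqueness of $A$ given $A_1$ is then tautological from the explicit construction.
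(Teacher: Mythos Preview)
Your outline is correct and runs parallel to the paper's argument, with one organisational difference: the paper does \emph{not} invoke the Corollary after Lemma~\ref{lem:2to1} but instead verifies $AJ_{z_{j_0}}A^{\tau}=J_{z_{j_0}}$ directly for every basis vector $z_{j_0}$. Concretely, the paper fixes $y_I\in E^I$, picks the multi-index $K$ so that $G_K^{-1}J_{z_{j_0}}G_I$ preserves $E^1$, writes $A_KJ_{z_{j_0}}A_I^{\tau}y_I$ out using~\eqref{eq:AII} and the explicit form of $A_I^{\tau}$, and then splits into the four parity cases $(G_I,G_K)\in\{J_{z_i},J_{z_i}J_{z_m}\}\times\{J_{z_l},J_{z_k}J_{z_l}\}$, checking one case in detail. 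Your route via the Corollary trades the loop over all $j_0$ for a single skew-relation plus the commutation relations $AJ_{z_{k_0}}J_{z_l}=J_{z_{k_0}}J_{z_l}A$; the latter, once block-decomposed, lead to exactly the same four-case parity bookkeeping, so there is no real saving --- but no loss either. One small inaccuracy in your write-up: the bridge $G_{I'}^{-1}J_{z_{k_0}}G_I$ can be a product of up to five generators (two from $G_{I'}^{-1}$, one from $J_{z_{k_0}}$, two from $G_I$), not ``at most three'', so you do need the full strength of hypothesis~(b), which covers products of arbitrary odd/even length preserving $E^1$. The uniqueness argument is handled identically in both: the paper checks the three possibilities (both even, both odd, mixed parity) for two admissible choices $G_I,G_{\tilde I}\colon E^1\to E^I$ and shows $A_I\circ A_{\tilde I}^{-1}=\Id$ using~\eqref{eq:A11}--\eqref{eq:A111}, exactly as you sketch.
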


\begin{proof}
The spaces $E^I$ are mutually orthogonal because all the involutions in $PI_{r,s}^*$ are symmetric.
Thus $V^{r,s}_{min}=\oplus E^I$, where the direct sum is orthogonal. 
For the convenience we also write the maps defining $A_I^{\tau}$:
\begin{equation}~\label{eq:AIt}
A_I^{\tau}=
\begin{cases}
G_IA_1^{-1}G_I^{-1},\ \ &\text{if}\ \  G_I=J_{z_i},
\\
G_IA_1^{\tau}G_I^{-1},\ \ &\text{if}\ \  G_I=J_{z_i}J_{z_k}.
\end{cases}
\end{equation}
Then we set $A=\oplus A_I$. We only need to check the condition $AJ_{z_{j_0}}A^{\tau}=J_{z_{j_0}}$ for any $z_{j_0}$ in the orthonormal basis for $\mathbb R^{r,s}$. 

We choose $y\in V^{r,s}_{min}=\oplus E^I$. Then we write $y=\oplus y_I$ with $y_I\in E^I$. Thus we distinguish the cases when the map $G_I$ is the product of an odd or even number of maps $J_{z_i}$. Moreover, we find a multi-index $K$ for the multi-index $I$, such that $G_K^{-1}J_{z_{j_0}}G_I$ leaves the space $E^1$ invariant. Since $G_K$ can also be the product of an even or odd number of $J_{z_k}$, we distinguish the following cases: $AJ_{z_{j_0}}A^{\tau}y_I=A_KJ_{z_{j_0}}A^{\tau}_Iy_I$
\begin{eqnarray*}
&=&
\begin{cases}
G_K(A_1^{-1})^{\tau}G_K^{-1}J_{z_{j_0}}G_IA_1^{-1}G_I^{-1}y_I\ \ &\text{if}\ G_I=J_{z_i},\ G_K=J_{z_l},
\\
G_KA_1G_K^{-1}J_{z_{j_0}}G_IA_1^{-1}G_I^{-1}y_I\ \ &\text{if}\ G_I=J_{z_i},\ G_K=J_{z_k}J_{z_l},
\\
G_K(A_1^{-1})^{\tau}G_K^{-1}J_{z_{j_0}}G_IA_1^{\tau}G_I^{-1}y_I\ \ &\text{if}\ G_I=J_{z_i}J_{z_m},\ \ G_K=J_{z_l},
\\
G_KA_1G_K^{-1}J_{z_{j_0}}G_IA_1^{\tau}G_I^{-1}y_I\ \ &\text{if}\ G_I=J_{z_i}J_{z_m},\ \ G_K=J_{z_k}J_{z_l},
\end{cases}
\end{eqnarray*}
by definitions~\eqref{eq:AII} and~\eqref{eq:AIt}  of $A_K$ and $A_I^{\tau}$. We only check the first condition, since the others can be verified similarly. The condition that $G_K^{-1}J_{z_{j_0}}G_I$ leaves the space $E^1$ invariant, reads as
$
(A_1^{-1})^{\tau}G_K^{-1}J_{z_{j_0}}G_IA_1^{-1}=G_K^{-1}J_{z_{j_0}}G_I$.
Indeed from~\eqref{eq:A11} we have 
\begin{eqnarray*}
(A_1^{-1})^{\tau}G_K^{-1}J_{z_{j_0}}G_IA_1^{-1}&=&
(A_1^{-1})^{\tau}J_{z_l}^{-1}J_{z_{j_0}}J_{z_i}A_1^{-1}=J_{z_l}^{-1}J_{z_{j_0}}J_{z_i}=G_K^{-1}J_{z_{j_0}}G_I.
\end{eqnarray*}
We calculate
\begin{eqnarray*}
G_K(A_1^{-1})^{\tau}G_K^{-1}J_{z_{j_0}}G_IA_1^{-1}G_I^{-1}y_I=G_KG_K^{-1}J_{z_{j_0}}G_IG_I^{-1}=J_{z_{j_0}}.
\end{eqnarray*}
Thus, $AJ_{z_{j_0}}A^{\tau}y_I=J_{z_{j_0}}y_I$, which proves the theorem.

Now we show the uniqueness. Let us assume that $G_{I}, G_{\tilde I}\colon E^1\to E^I$ and both $G_{I}, G_{\tilde I}$ are the product of even number of $J_{z_k}$. Then
$
A_{1}G_I=G_IA_1$, and $A_{1}G_{\tilde I}=G_{\tilde I}A_1$.
It implies
$$
A_{I}\circ A_{\tilde I}^{-1}=G_IA_1G_I^{-1}G_{\tilde I}A_1^{-1}G_{\tilde I}^{-1}=G_IG_I^{-1}G_{\tilde I}A_1A_1^{-1}G_{\tilde I}^{-1}=\Id,
$$
since $G_I^{-1}G_{\tilde I}$ is the product of even number of $J_{z_k}$ and we can apply~~\eqref{eq:A111}. Thus $A_{I}= A_{\tilde I}^{-1}$.

Let now $G_{I}, G_{\tilde I}\colon E^1\to E^I$ and both of them is the product of odd numbers of generators. Then by making use of~\eqref{eq:AII} and~\eqref{eq:A111} we obtain
$$
A_{I}\circ A_{\tilde I}^{-1}=G_I(A_1^{\tau})^{-1}G_I^{-1}G_{\tilde I}A_1^{\tau}G_{\tilde I}^{-1}=
G_I(A_1^{\tau})^{-1}A_1^{\tau}G_I^{-1}G_{\tilde I}G_{\tilde I}^{-1}=\Id
$$
since $G_I^{-1}G_{\tilde I}$ is the product of even number of generators.

Finally, if $G_{I}\colon E^1\to E^I$ is the product of odd number of generators and $G_{\tilde I}\colon E^1\to E^I$ is the product of even number of generators, then we obtain 
$$
A_{I}\circ A_{\tilde I}^{-1}=G_I(A_1^{\tau})^{-1}G_I^{-1}G_{\tilde I}A_1^{-1}G_{\tilde I}^{-1}=
G_IG_I^{-1}G_{\tilde I}A_1A_1^{-1}G_{\tilde I}^{-1}=\Id
$$
by~\eqref{eq:AII}. Here we used the fact that $G_I^{-1}G_{\tilde I}$ is the product of odd number of generators and~\eqref{eq:A11}.
\end{proof}

 
\subsection{Classification of pseudo $H$-type Lie algebras $\n_{r,s}(U)$}


We start from the necessary condition of isomorphism between two $H$-type Lie algebras.
\begin{theorem}\label{property of isomorphism}\cite[Theorem 2]{FM1}.
Let $(V^{r,s},\, \la\cdot\,,\,\cdot\ra_{V^{r,s}})$  and $(V^{\tilde r,\tilde s},\,\la\cdot\,,\,\cdot\ra{V^{\tilde{r},\tilde{s}}})$ 
be admissible modules 
of the Clifford algebras $\Cl_{r,s}$ and $\Cl_{\tilde r,\tilde s}$,
respectively. 
Assume that $r+s=\tilde r+\tilde s$, $\dim (V^{r,s})=\dim(V^{\tilde r,\tilde s})$, 
and that the Lie algebras  
$\n_{r,s}(V^{r,s})$ and $\n_{\tilde r,\tilde s}(V^{\tilde r,\tilde s})$ are isomorphic. 
Then, either $(r,s)=(\tilde r,\tilde s)$ or $(r,s)=(\tilde s,\tilde r)$. 
\end{theorem}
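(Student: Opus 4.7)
The plan is to recover the signature $(r,s)$, up to a swap, from the Lie bracket on $\n_{r,s}(V^{r,s})$ alone, by extracting from the bracket a quadratic form on the center that is proportional to $Q_{r,s}$.

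First I would verify that $[V^{r,s}, V^{r,s}] = \R^{r,s}$: if a nonzero $z$ were orthogonal to $[V^{r,s}, V^{r,s}]$, then $\la J_z u, v\ra_{V^{r,s}} = 0$ for every $u, v$, hence $J_z = 0$, contradicting injectivity of $z \mapsto J_z$ (which follows from the Clifford relations $J_z J_w + J_w J_z = -2\la z, w\ra_{r,s}\Id$ combined with non-degeneracy of $\la \cdot, \cdot\ra_{r,s}$). Therefore any Lie-algebra isomorphism $\Phi$ preserves the center and restricts to a linear isomorphism $C \colon \R^{r,s} \to \R^{\tilde r, \tilde s}$; choosing complements of the centers, it induces a linear isomorphism $A \colon V^{r,s} \to V^{\tilde r, \tilde s}$ with $C([u, v]) = [Au, Av]$ for all $u, v \in V^{r,s}$.

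Next I would introduce the transfer operator $\tilde C \colon \R^{r,s} \to \R^{r,s}$ defined by $\la \tilde C z, w\ra_{r,s} = \la Cz, Cw\ra_{\tilde r, \tilde s}$; this is symmetric and invertible. Combining $C([u,v]) = [Au, Av]$ with the defining identities for $J$ and $\tilde J$ yields $J_{\tilde C z} = A^{\tau} \tilde J_{Cz} A$ for every $z$. Squaring and taking determinants (with $n = \dim V^{r,s} = \dim V^{\tilde r, \tilde s}$, which is even) produces the polynomial identity
\[
Q_{r,s}(\tilde C z)^n = (\det A)^4 \, Q_{\tilde r, \tilde s}(Cz)^n
\]
in $\R[z_1, \ldots, z_{r+s}]$, and unique factorization forces $Q_{r,s}(\tilde C z) = \mu \, Q_{\tilde r, \tilde s}(Cz)$ for some $\mu \in \R \setminus \{0\}$.

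Finally I would rewrite both sides as quadratic forms in $z$: $Q_{r,s}(\tilde C z) = \la \tilde C^2 z, z\ra_{r,s}$ and $Q_{\tilde r, \tilde s}(Cz) = \la \tilde C z, z\ra_{r,s}$. The identity $\la (\tilde C^2 - \mu \tilde C) z, z\ra_{r,s} = 0$ for all $z$, together with symmetry of $\tilde C^2 - \mu \tilde C$ and non-degeneracy of $\la \cdot, \cdot\ra_{r,s}$, forces $\tilde C^2 = \mu \tilde C$; invertibility of $\tilde C$ then yields $\tilde C = \mu \Id$. Thus $\la Cz, Cw\ra_{\tilde r, \tilde s} = \mu \la z, w\ra_{r,s}$, i.e.\ $C$ is conformal with factor $\mu$. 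By Sylvester's law of inertia, the form $\mu \la \cdot, \cdot\ra_{r,s}$ has signature $(r, s)$ for $\mu > 0$ and $(s, r)$ for $\mu < 0$; comparing with the signature $(\tilde r, \tilde s)$ of $\la C\cdot, C\cdot\ra_{\tilde r, \tilde s}$ yields the claim. The step I expect to be most delicate is the passage from the $n$-th power identity to $Q_{r,s}(\tilde C z) = \mu \, Q_{\tilde r, \tilde s}(Cz)$, where one must rule out a spurious root-of-unity ambiguity; evenness of $n$, positivity of $(\det A)^4$, and unique factorization in the polynomial ring (noting that a non-degenerate real quadratic form of rank at least three is irreducible, with the low-rank cases treated by direct inspection) close this gap.
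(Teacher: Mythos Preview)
The paper does not supply its own proof of this theorem; it is quoted from \cite[Theorem 2]{FM1} and used as input to the classification. So there is no in-paper argument to compare against, and I can only assess your proposal on its own merits.

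Your argument is correct and is, in fact, the standard route (and essentially the one in the cited reference): extract from an isomorphism $\Phi=A\oplus C$ the intertwining relation $J_{\tilde C z}=A^{\tau}\tilde J_{Cz}A$, pass to determinants to obtain a polynomial identity between the two quadratic forms pulled back to $\R^{r,s}$, and conclude that $C$ is conformal, after which Sylvester finishes the job. A few remarks on points you should make explicit when writing it up:
\begin{itemize}
\item Your ``choosing complements'' step is legitimate because the center of $\n_{r,s}(V^{r,s})$ equals $\R^{r,s}$ (not merely contains it): a nonzero $u\in V^{r,s}$ cannot be central since $J_{z}u\neq 0$ for any non-null $z$. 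Then projecting $\Phi|_{V^{r,s}}$ along the center onto $V^{\tilde r,\tilde s}$ produces the map $A$ with $C([u,v])=[Au,Av]$.
\item In the determinant step, the cleanest order is ``take $\det$, then square'': from $\det J_{\tilde C z}=\det A^{\tau}\cdot\det\tilde J_{Cz}\cdot\det A$ and $\det(J_{w}^2)=(-Q(w))^{n}$ you get the stated identity. You should also note that $\det A^{\tau}=\pm\det A$ (via $A^{\tau}=\eta_{V^{r,s}}^{-1}A^{T}\eta_{V^{\tilde r,\tilde s}}$), so that $(\det A^{\tau}\det A)^{2}=(\det A)^{4}>0$.
\item For the passage $P^{n}=cR^{n}\Rightarrow P=\mu R$ you do not need irreducibility at all: over $\mathbb C[z]$ the factorisation $P^{n}-\mu^{n}R^{n}=\prod_{k}(P-\mu\omega^{k}R)=0$ in an integral domain forces $P=\mu\omega^{k}R$ for some $k$, and reality of $P,R$ gives $\omega^{k}=\pm 1$. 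This removes the low-rank caveat.
\item Evenness of $n$ is automatic: for $s=0$ any $J_{z_i}$ with $z_i$ positive is a complex structure on $V^{r,0}$, while for $s>0$ admissible modules are neutral by Proposition~\ref{properties of admissible module}(4).
\end{itemize}
With these clarifications your proof is complete; it is precisely the conformality statement underlying Lemma~\ref{lem:orth_rs} and \cite[Theorem~2]{FM1}.
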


The classification of the pseudo $H$-type algebras $\n_{r,s}(V^{r,s}_{min})$, constructed from the minimal admissible modules was done in~\cite{FM1}. We summarise the results of the classification in Table~\ref{t:step1}.

\begin{table}[h]
\center\caption{Classification result for minimal admissible modules}
\scalebox{0.8}{
\begin{tabular}{|c||c|c|c|c|c|c|c|c|c|}
\hline
$8$&$\cong$&&&&&&&&
\\
\hline
7&d&d&d&$\not\cong$&&&&&
\\
\hline
6&d&$\cong$&$\cong$&h&&&&&
\\
\hline
5&d&$\cong$&$\cong$&h&&&&&
\\
\hline
4&$\cong$&h&h&h&&&&&\\\hline
3&d&$\not\cong$&$\not\cong$&&d&d&d&$\not\cong$&\\\hline
2&$\cong$&h&&$\not\cong$&d&$\cong$&$\cong$&h&$$\\\hline
1&$\cong$&&d&$\not\cong$&d&$\cong$&$\cong$&h&$$\\\hline
0&&$\cong$&$\cong$&h&$\cong$&h&h&h&$\cong$\\\hline\hline
$s/r$&0&1&2&3&4&5&6&7&8\\
\hline
\end{tabular}\label{t:step1}
}
\end{table}
Here ``d'' stands for ``double'', meaning that $\dim V^{r,s}_{min}=2\dim
V^{s,r}_{min}$ and
``h'' (half) means that $\dim V^{r,s}_{min}=\frac{1}{2}\dim V^{s,r}_{min}$. 
The corresponding pairs are trivially non-isomorphic due to the 
different dimension of minimal admissible modules. 
The symbol $\cong$ denotes the Lie algebra $\n_{r,s}(V^{r,s}_{min})$ having 
isomorphic counterpart $\n_{s,r}(V^{s,r}_{min})$, the symbol $\not\cong$ shows that the Lie algebra $n_{r,s}(V^{r,s}_{min})$ is not  
isomorphic to $\n_{s,r}(V^{s,r}_{min})$. 

The result of the classification for the cases when the Lie algebras has the same signature $(r,s)$ of the scalar product on the centre and arbitrary admissible modules is contained in~\cite[Theorems 4.1.1-4.1.3]{FM2}. We summarise the result here.

\begin{theorem}\label{main theorem 1} Let $U=(U,\la\cdot\,,\cdot\ra_{U})$ and
$\tilde U=(\tilde{U},\la\cdot\,,\cdot\ra_{\tilde{U}})$
be admissible modules of a Clifford algebra $\Cl_{r,s}$.
\begin{itemize}
\item[1.]{If $r=0,1,2\mod 4$, 
then $\n_{r,s}(U)\cong \n_{r,s}(\tilde U)$, if and only if $\dim(U)=\dim(\tilde U)$}
\item[2.] {
Let $r=3\mod 4$ and $s=0\mod 4$ 
and let the admissible modules be decomposed into the direct sums of the type~\eqref{eq:rseq3}: 
Then the Lie algebras $\n_{r,s}(U)$ and $\n_{r,s}(\tilde{U})$ are isomorphic, if and only if,
$$
p=p^+_++p^{-}_-=\tilde{p}^+_++\tilde{p}^{-}_-=\tilde p\ \ \text{and}\ \ q=p^{-}_++ p^+_-=\tilde{p}^{-}_++\tilde{p}^+_-=\tilde q,\qquad\text{or}
$$
$$
p=p^+_++p^{-}_-=\tilde{p}^{-}_++\tilde{p}^+_-=\tilde q\ \ \text{and}\ \ q=p^{-}_++ p^+_-=\tilde{p}^+_++\tilde{p}^{-}_-=\tilde p.
$$
}
\item[3.]{
Let $r=3\mod 4$ and 
$s=1,2,3\mod 4$ 
and let $U$ and $\tilde U$ be decomposed into the direct sums~\eqref{eq:rsneq3} 
Then $\n_{r,s}(U)\cong \n_{r,s}(\tilde{U})$, if and only if
$$
p=p^+=\tilde{p}^+=\tilde p\ \text{ and }\ q=p^-=\tilde{p}^-=\tilde q,\qquad\text{or}
$$
$$
p=p^+=\tilde{p}^-=\tilde q\ \text{ and }\ q=p^-=\tilde{p}^+=\tilde p.
$$ 
}
\end{itemize}
\end{theorem}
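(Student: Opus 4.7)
The plan is to prove both implications of the equivalence. Sufficiency amounts to producing an explicit isomorphism $\Phi = A \oplus C$ between the Lie algebras, while necessity amounts to extracting invariants of the isomorphism class from the structure theory of admissible modules already laid out. The common starting point for both directions is the orthogonal decomposition into minimal admissible modules from Proposition \ref{properties of admissible module}(2), refined according to the dichotomy between \eqref{eq:rsneq3} and \eqref{eq:rseq3} dictated by Proposition \ref{r-s = 0,1,2 mod 4}.

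For sufficiency, I would fix orthogonal decompositions of $U$ and $\tilde U$ into minimal admissible summands and match them summand by summand. On each matched pair of summands of identical type, an isometric Clifford module isomorphism furnishes the corresponding block of $A$ with $C = \Id$. The substantive content concerns the swaps built into the statement, namely $(p,q) \leftrightarrow (q,p)$ in case 3 and the analogous swap of the four-tuples in case 2. I would realise these by choosing a suitable $C \in \Orth(r,s)$ (essentially $-\Id$, or a product of reflections that negates $J_{\Omega^{r,s}}$ under intertwining), so that the relation $A^{\tau} J_z A = J_{C^{\tau} z}$ forces $A$ to flip the sign of the admissible scalar product on each summand, thereby converting $V^{r,s;+}_{min}$ to $V^{r,s;-}_{min}$ and, in case 2, simultaneously toggling the two non-equivalent irreducibles. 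Case 1 follows from the same device, which shows that any two admissible modules of the same dimension yield isomorphic Lie algebras regardless of the individual multiplicities $p^+, p^-$.

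For necessity, Lemma \ref{lem:orth_rs} already places any Lie algebra isomorphism in the form $A \oplus C$ with $C \in \Orth(r,s)$ and $A^{\tau}J_z A = J_{C^{\tau}z}$, so the dimension of $U$ is automatically invariant. The remaining task, relevant only when $r \equiv 3 \pmod 4$, is to show that the listed sums of multiplicities genuinely separate isomorphism classes. The intertwining relation forces $A$ to carry the isotypic components of $U$ under the volume form action onto the corresponding components of $\tilde U$, possibly swapping the $+/-$ labels when $C$ has determinant $-1$ and hence conjugation by $A$ inverts the sign of $J_{\Omega^{r,s}}$. Similarly, the signature of the scalar product restricted to $E_{r,s}$ is preserved up to a global sign inversion corresponding to negating the admissible scalar product. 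Taking the quotient of the set of decomposition tuples by the resulting $\mathbb Z/2$ or $\mathbb Z/2 \times \mathbb Z/2$ action yields precisely the equivalences in the theorem.

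The main obstacle is the necessity portion of case 2, where one must show that beyond the obvious swaps no further coincidences occur: distinct orbits of four-tuples $(p^+_+, p^-_+, p^+_-, p^-_-)$ under the two-fold action must yield non-isomorphic Lie algebras. This requires a concrete invariant distinguishing the orbits, which I would build from traces of operators of the form $A J_{\Omega^{r,s}} A^{\tau}$ restricted to the common $1$-eigenspace $E_{r,s}$, following the strategy of \cite{FM1, FM2} where $E_{r,s}$ serves as the test ground for both the sign of the admissible scalar product and the sign of the volume form action.
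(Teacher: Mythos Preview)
The paper does not prove this theorem at all: it is quoted from the companion paper~\cite{FM2} (Theorems 4.1.1--4.1.3 there) and stated here purely as background for the determination of automorphism groups. So there is no proof in this paper to compare your proposal against.

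As a standalone assessment: your outline is broadly the right shape, and in fact tracks the strategy of~\cite{FM1,FM2}. For sufficiency, the swaps you describe are indeed realised by suitable $A\oplus C$ with $C\in\Orth(r,s)$, and the paper's Proposition~\ref{lem:phiArbitrary} supplies the mechanism (elements of $\Pin(r,s)$ acting by $J_\varphi$). For case~1 you should note that when $r\equiv 0,1,2\pmod 4$ and $s\not\equiv 0\pmod 4$ the common $1$-eigenspace is neutral, so $V^{r,s;+}_{min}$ and $V^{r,s;-}_{min}$ coincide and there is nothing to prove; the nontrivial subcase is $s\equiv 0\pmod 4$, where an explicit $A$ flipping the sign of the scalar product on $E_{r,s}$ must actually be produced, and your remark about $C=-\Id$ needs to be supplemented by exhibiting the intertwiner. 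For necessity in cases~2 and~3 you correctly identify the hard point: proving that no further collapse occurs beyond the $\mathbb Z/2$ (or $\mathbb Z/2\times\mathbb Z/2$) orbit. Your proposed invariant via traces on $E_{r,s}$ is the device used in~\cite{FM2}, but turning it into a rigorous separation argument is the bulk of the work there; as written, your proposal flags the obstacle without resolving it.
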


According to Theorem~\ref{main theorem 1} in the cases 
$r= 3\mod 4$ and $s=0\mod 4$ we can substitute $\oplus^{p_-^+}V^{r,s;+}_{min;-}$ by $\oplus^{p_+^-}V^{r,s;-}_{min;+}$ if $p_-^+=p_+^-$. Analogously we replace $\oplus^{p_-^-}V^{r,s;-}_{min;-}$ by $\oplus^{p_+^+}V^{r,s;+}_{min;+}$ if $p_-^-=p_+^+$. Hence we reduce the decompositions of an admissible module to the sums containing only $V^{r,s;\pm}_{min;+}$, and moreover we write simply $V^{r,s;\pm}_{min}$ for the simplicity. Thus
if $r=3\mod 4$ the Lie algebra $\n_{r,s}(U)$ essentially depends on the decomposition 
\begin{equation}\label{eq:isotyp} 
U=\big(\stackrel{p}\oplus V^{r,s;+}_{min}\big)\bigoplus \big(\stackrel{q}\oplus V^{r,s;-}_{min}\big),
\end{equation}
where the numbers $p,q$ are defined in items 2 and 3 of Theorems~\ref{main theorem 1}. We call admissible modules with decompositions~\eqref{eq:isotyp} {\it isotypic} if one of the numbers $p$ or $q$ vanishes. Otherwise the admissible module is called {\it non-isotypic} of type $(p,q)$.

Now we state the classification when the Lie algebras has opposite signatures $(r,s)$ and $(s,r)$ of the scalar products on the centres and arbitrary admissible modules, see~\cite[Theorems 4.6.2]{FM2}. 
We formulate here the revised version of the result obtained in~\cite[Theorem 4.6.2]{FM2}.

\begin{theorem}\label{th:finalclass}
Let $r=0,1,2\mod 4$ and 
$s=0,1,2\mod 4$. 
Then $\n_{r,s}(U^{r,s})\cong\n_{s,r}(U^{s,r})$ if $\dim(U^{r,s})=\dim(U^{s,r})$.

Let $r=3\mod 8$, $s=0,4,5,6\mod 8$
or 
$r=7\mod 8$, $s=0,1,2\mod 8$. 
Then $\n_{r,s}(U^{r,s})\cong\n_{s,r}(U^{s,r})$ if $\dim(U^{r,s})=\dim(U^{s,r})$ 
and 
$U^{r,s}=\big(\stackrel{p}\oplus V^{r,s;+}_{min}\big)\bigoplus \big(\stackrel{p}\oplus V^{r,s;-}_{min}\big).$

Let $r\equiv 3\,(\text{\em mod}~8)$ and $s\equiv 1,2,7\,(\text{\em mod}~8)$. 
Then $\n_{r,s}(U^{r,s})$ is never isomorphic to $\n_{s,r}(U^{s,r})$. 
\end{theorem}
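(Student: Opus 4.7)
The plan is to combine the intra-signature classification Theorem~\ref{main theorem 1} with the minimal-module comparison data of Table~\ref{t:step1} and the necessary condition from Theorem~\ref{property of isomorphism}. First I would use Theorem~\ref{property of isomorphism} to enforce $\dim U^{r,s}=\dim U^{s,r}$ (otherwise the algebras are trivially non-isomorphic) and decompose both admissible modules into minimal admissible summands using Proposition~\ref{properties of admissible module}(2) together with the normal forms~\eqref{eq:rsneq3}--\eqref{eq:rseq3}. This reduces the whole question to the combinatorics of the multiplicity data on each side.

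For the first regime ($r,s\equiv 0,1,2 \pmod 4$), both $r$ and the ``$r$-slot'' of $(s,r)$, namely $s$, are $\equiv 0,1,2\pmod 4$. Hence Theorem~\ref{main theorem 1}(1) applies on both sides and tells us that each of $\n_{r,s}(U^{r,s})$ and $\n_{s,r}(U^{s,r})$ depends only on its module dimension. It therefore suffices to establish isomorphism at the minimal-module level, which I would do by reading off Table~\ref{t:step1}: every entry with $r,s\equiv 0,1,2\pmod 4$ is either $\cong$ or displays an h/d relation, and in the latter case two copies of the smaller-dimensional minimal algebra are isomorphic to one copy of the larger; matching total dimensions then gives the desired isomorphism.

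For the second regime the $(r,s)$-side has $r\equiv 3 \pmod 4$ and falls under Theorem~\ref{main theorem 1}(2)--(3), so its Lie algebra is classified by the unordered pair of multiplicities $\{p,q\}$ appearing in~\eqref{eq:isotyp}; the $(s,r)$-side, however, has its $r$-slot equal to $s\equiv 0,1,2\pmod 4$, so Theorem~\ref{main theorem 1}(1) applies there and the algebra is determined purely by dimension. I would construct the claimed isomorphism in the balanced case by first exhibiting an explicit Lie algebra isomorphism between the basic piece $\n_{r,s}(V^{r,s;+}_{\min}\oplus V^{r,s;-}_{\min})$ and the corresponding $\n_{s,r}$-algebra (the existence is already visible in the rows of Table~\ref{t:step1} for $r\equiv 3, 7\pmod 8$), and then taking $p$-fold direct sums. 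Conversely, if $p\neq q$ the $(r,s)$-side invariant is asymmetric while the $(s,r)$-side is not, so no matching can occur; this yields the necessity of $p^+=p^-$.

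For the third regime ($r\equiv 3\pmod 8$, $s\equiv 1,2,7\pmod 8$) I would rule out any isomorphism by producing an obstruction coming from the sign behaviour of the admissible form on the common 1-eigenspace. Using the periodicity~\eqref{eq:Bott_periodicity} to reduce to the basic window~\eqref{basic cases} and reading the $\pm/N$ labels in Table~\ref{t:dim}, one checks that the $\pm$-structures of $E_{r,s}$ and $E_{s,r}$ cannot be simultaneously matched for any choice of multiplicities $(p^+,p^-)$ on the $(r,s)$-side and any admissible module of the required dimension on the $(s,r)$-side; the parity mismatch prevents the Lie algebras from being isomorphic.

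The main obstacle I foresee is precisely this sign-bookkeeping in the second and third regimes: identifying \emph{which} $\pm$-labels on $E_{r,s}$ and $E_{s,r}$ pair up under the signature swap, and writing down an explicit intertwiner in the balanced case, requires juggling the $PI_{r,s}$-data of Section~\ref{sec:Ers} together with the periodicity~\eqref{eq:Bott_periodicity} while keeping consistent sign conventions (cf.\ Proposition~\ref{properties of admissible module}(1)). This delicacy is presumably what forced the present revision of~\cite[Theorem 4.6.2]{FM2}.
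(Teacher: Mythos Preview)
The paper does not actually prove Theorem~\ref{th:finalclass}; it is presented as a (revised) quotation of \cite[Theorem~4.6.2]{FM2}.  The only argument the paper itself supplies appears in the Appendix (Section~\ref{App}), where a few of the cross--signature isomorphisms from the second regime are built \emph{explicitly}: one fixes integral bases on both sides, writes $A\oplus C$ in coordinates, and solves the resulting bilinear system (e.g.\ the construction of $\n_{1,7}(V^{1,7;N}_{\min})\cong \n_{7,1}(V^{7,1;+}_{\min}\oplus V^{7,1;-}_{\min})$).  So for the first and third regimes there is essentially nothing in the paper to compare your proposal against, and for the second regime your plan (``exhibit the isomorphism on the basic balanced piece, then take $p$-fold direct sums'') coincides with what the Appendix does.

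That said, two steps of your outline are not yet proofs.  In the first regime you assert that in every h/d cell of Table~\ref{t:step1} ``two copies of the smaller minimal algebra are isomorphic to one copy of the larger''.  Table~\ref{t:step1} only records the dimension mismatch; it says nothing about such a doubling isomorphism, so this step still requires either an explicit construction (as in the Appendix) or a citation to \cite{FM1,FM2}.

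More seriously, your obstruction in the third regime is not valid as written.  The $\pm/N$ label in Table~\ref{t:dim} records the sign of the restriction of the \emph{admissible scalar product} to $E_{r,s}$, but a Lie algebra isomorphism $A\oplus C$ is not required to respect that scalar product on $U$: indeed the whole point of the computations in Section~\ref{Appendix0} is that the groups $\Aut^0(\n_{r,s}(U))$ are typically far from being contained in $\Orth(U,\langle\cdot,\cdot\rangle_U)$ (they are $\GL$, $\Sp$, $\Orth^*$, etc.).  Hence the sign of $\langle\cdot,\cdot\rangle_U|_{E_{r,s}}$ is not, on the face of it, a Lie algebra invariant, and cannot by itself rule out an isomorphism $\n_{r,s}\cong\n_{s,r}$.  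The non-isomorphism results in \cite{FM1,FM2} for these signatures use different invariants; you would need either to invoke those or to explain why your sign data can be recovered from the bracket alone.
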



\subsection{Periodicity of $\Aut(\n_{r,s}(U))$ in parameters $(r,s)$}\label{periodicity of iso}


In order to achieve the description of the groups $\Aut^0(\n_{r,s}(U))$ we need only to describe the basic cases, since the rest of the cases follow from the theorems that extend the periodicity properties in $(r,s)$ of the Clifford algebras to the counterpart on the pseudo $H$-type Lie algebras.

\begin{proposition}\label{periodicity all}\cite[Propositions 4.2.1 and 4.2.2]{FM2}
Let $(U^{r,s}_{min},\la\cdot\,,\cdot\ra_{U^{r,s}_{min}})$ 
be a minimal admissible module of $\Cl_{r,s}$ and
$J_{z_i}$, $i=1,\ldots,r+s$ 
the Clifford actions of the orthonormal basis $\{z_{i}\}$. Let also $(V^{\mu,\nu}_{min},\la\cdot\,,\cdot\ra_{V^{\mu,\nu}_{min}})$ 
be a minimal admissible module of $\Cl_{\mu,\nu}$ for $(\mu,\nu)\in\{(8,0),(0,8),(4,4)\}$ and
$J_{\zeta_i}$, $i=1,\ldots,8$ 
the Clifford actions of the orthonormal basis $\{\zeta_{i}\}$.Then
\begin{equation}\label{tensor product rep to direct sum}
U^{r,s}_{min}\otimes V_{min}^{\mu,\nu}
=(U^{r,s}_{min}\otimes E_{\mu,\nu})\bigoplus_{i=1}^{8}\big(U^{r,s}_{min}\otimes J_{\zeta_i}(E_{\mu,\nu})\big)\bigoplus_{j=2}^{8}\big(U^{r,s}_{min}\otimes J_{\zeta_1}J_{\zeta_{j}}(E_{\mu,\nu})\big)
\end{equation}
is a minimal admissible module $U^{r+\mu,s+\nu}_{min}$ of the Clifford algebra $\Cl_{r+\mu,s+\nu}$.

Conversely, if $U^{r+\mu,s+\nu}_{min}$ is a minimal admissible module of the algebra
$\Cl_{r+\mu,s+\nu}$, then the common 1-eigenspace $E_0$ of the involutions
$T_{i}$, $i=1,2,3,4$ from Example~\ref{ex:inv44} can be considered as a minimal admissible module $U^{r,s}_{min}$ of the algebra
$\Cl_{r,s}$. The action of the Clifford algebra $\Cl_{r,s}$ on
$E_{0}$ is the restricted action of $\Cl_{r+\mu,s+\nu}$ obtained by the natural inclusion 
$\Cl_{r,s}\subset \Cl_{r+\mu,s+\nu}$. 
\\

According to the correspondence of minimal admissible modules 
stated in Proposition~\ref{periodicity all}, there
is a natural injective map
\begin{equation}\label{eq:B}
\mathcal{B}\colon\C(\n_{r,s}(U^{r,s}_{min}))\to
\C(\n_{r+\mu,s+\nu}(U^{r+\mu,s+\nu}_{min})).
\end{equation}

Conversely, automorphisms of the form ${A}\oplus
C\in \C(\n_{r+\mu,s+\nu}(U^{r+\mu,s+\nu}_{min}))$ with the property that
$C(\zeta_{j})=\zeta_{j}$, $j=1,\ldots,8$,
defines an automorphism $A_{|E_{0}}\oplus C_{|\mathbb{R}^{r,s}}$ of 
the algebra $\n_{r,s}(E_0)$, where
the space $E_0$ is the common 1-eigenspace of the 
involutions $T_{j}$, $j=1, 2, 3, 4$, viewed as a minimal admissible module of $\Cl_{r,s}$.
\end{proposition}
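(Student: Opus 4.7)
The plan is to prove the proposition in two passes: first the bijective correspondence between minimal admissible modules of $\Cl_{r,s}$ and $\Cl_{r+\mu,s+\nu}$, then the induced injective homomorphism $\mathcal B$ together with its converse restriction.

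For the module correspondence in the forward direction, I would equip $U^{r,s}_{min}\otimes V^{\mu,\nu}_{min}$ with the Clifford action $\tilde J_{z_i}:=J_{z_i}\otimes J_{\Omega^{\mu,\nu}}$ for $z_i\in\mathbb R^{r,s}$ and $\tilde J_{\zeta_j}:=\Id\otimes J_{\zeta_j}$ for $\zeta_j\in\mathbb R^{\mu,\nu}$. Three facts make this a genuine admissible Clifford representation: by~\eqref{eq:Omega2} we have $(\Omega^{\mu,\nu})^2=+1$ for all three signatures in $\{(8,0),(0,8),(4,4)\}$; the product $\Omega^{\mu,\nu}$ of eight generators anticommutes with each single $\zeta_j$; and $J_{\Omega^{\mu,\nu}}$ is symmetric since reversing eight skew factors contributes a sign $(-1)^{28}=+1$. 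These imply $\tilde J_w^2=-\la w,w\ra\Id$, pairwise anticommutation of distinct basis images, and skew-symmetry with respect to the tensor product scalar product. Dimension counting $16\cdot\dim U^{r,s}_{min}=\dim U^{r+\mu,s+\nu}_{min}$ confirms minimality, and tensoring $U^{r,s}_{min}$ with the sixteen-term decomposition of $V^{\mu,\nu}_{min}$ from Example~\ref{ex:inv44} yields~\eqref{tensor product rep to direct sum}. For the reverse direction, start with $U^{r+\mu,s+\nu}_{min}$: each $T_j$ is a product of four $J_{\zeta_k}$'s and each $J_{z_i}$ with $z_i\in\mathbb R^{r,s}$ anticommutes with every $J_{\zeta_k}$, so $J_{z_i}$ commutes with every $T_j$ and hence preserves $E_0:=\cap_j\ker(T_j-\Id)$. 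The admissible scalar product restricts nondegenerately to $E_0$, the maps $J_{z_i}|_{E_0}$ remain skew, and $\dim E_0=\tfrac{1}{16}\dim U^{r+\mu,s+\nu}_{min}=\dim U^{r,s}_{min}$ gives minimality.

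Given $A\oplus C\in\C(\n_{r,s}(U^{r,s}_{min}))$, I would define $\mathcal B(A\oplus C):=\tilde A\oplus\tilde C$ with $\tilde C:=C\oplus\Id_{\mathbb R^{\mu,\nu}}\in\Orth(r+\mu,s+\nu)$ and $\tilde A$ given by the piecewise formulas of Theorem~\ref{th:general}: identify $E_0$ with $U^{r,s}_{min}\otimes E_{\mu,\nu}\cong U^{r,s}_{min}$, set $\tilde A|_{E_0}:=A$, and on each other eigencomponent $G_I(E_0)\cong U^{r,s}_{min}\otimes g_I(E_{\mu,\nu})$ put $\tilde A=A\otimes\Id$ when $g_I$ is a product of an even number of generators and $\tilde A=(A^{-1})^\tau\otimes\Id$ when $g_I$ is odd. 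Verifying $\tilde A\oplus\tilde C\in\C(\n_{r+\mu,s+\nu})$ then reduces via Theorem~\ref{th:general} to checking hypotheses~\eqref{eq:A11}--\eqref{eq:A111} on $E_0$, which follow from $A^\tau J_z A=J_{C^\tau z}$ (equivalently $A^{-1}J_zA^{-\tau}=J_{C^\tau z}$) combined with the fact that $J_{\Omega^{\mu,\nu}}$ acts as a fixed scalar $\pm 1$ on the one-dimensional space $E_{\mu,\nu}$, which tracks the alternation of signs across the sixteen eigencomponents. Injectivity is immediate since $\tilde A|_{E_0}=A$ and $\tilde C|_{\mathbb R^{r,s}}=C$ recover the original automorphism. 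For the converse restriction, any $A\oplus C\in\C(\n_{r+\mu,s+\nu})$ with $C(\zeta_j)=\zeta_j$ satisfies $A^\tau J_{\zeta_k}A=J_{\zeta_k}$, hence commutes with each $T_j$, preserves $E_0$, and the restricted pair $A|_{E_0}\oplus C|_{\mathbb R^{r,s}}$ inherits the automorphism identity on the subalgebra.

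The main obstacle will be the construction of $\mathcal B$ in the general (non-orthogonal) case: the naive tensor extension $\tilde A:=A\otimes\Id$ produces $\tilde A^\tau\tilde J_{\zeta_j}\tilde A=A^\tau A\otimes J_{\zeta_j}$, which fails to equal $\tilde J_{\zeta_j}$ unless $A^\tau A=\Id$ on $U^{r,s}_{min}$, a property not forced by membership in $\C(\n_{r,s})$. The alternating piecewise rule above, dictated by Theorem~\ref{th:general}, is precisely designed to compensate: on odd components the transposed extension $A^{-\tau}$ yields $\tilde A^\tau\tilde J_{\zeta_j}\tilde A=A^{-1}\cdot A\otimes J_{\zeta_j}=\Id\otimes J_{\zeta_j}$. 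Most of the remaining work is the routine sign bookkeeping of $J_{\Omega^{\mu,\nu}}$ against the parity of the bridge map $g_I$ across the sixteen components of decomposition~\eqref{tensor product rep to direct sum}.
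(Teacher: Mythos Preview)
The paper does not prove this proposition; it is cited from \cite[Propositions 4.2.1 and 4.2.2]{FM2}, so there is no in-text argument to compare against. Your outline is essentially correct and follows a natural line: the tensor construction with $\tilde J_{z_i}=J_{z_i}\otimes J_{\Omega^{\mu,\nu}}$ and $\tilde J_{\zeta_j}=\Id\otimes J_{\zeta_j}$ is the standard way to realise the periodicity isomorphism at the level of admissible modules, and the properties you check (square, anticommutation, skewness, dimension count) are the right ones. The converse via $E_0$ is also handled correctly.

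There is one genuine gap in the construction of $\mathcal B$. You reduce the verification that $\tilde A\oplus\tilde C\in\C(\n_{r+\mu,s+\nu})$ to Theorem~\ref{th:general}, but that theorem is stated and proved only for $\Aut^0$, i.e.\ for $C=\Id$: hypothesis~\eqref{eq:A11} reads $A_1^\tau J_{z_j}A_1=J_{z_j}$, whereas a general $A\oplus C\in\C(\n_{r,s})$ only gives $A^\tau J_{z_i}A=J_{C^\tau z_i}$ on $E_0$. So Theorem~\ref{th:general} does not apply verbatim when $C\neq\Id$. The repair is easy and you have essentially already found it: your alternating rule $\tilde A\cong A$ on even components and $\tilde A\cong(A^{-1})^\tau$ on odd components does give $\tilde A^\tau\tilde J_{\zeta_j}\tilde A=\tilde J_{\zeta_j}$ and $\tilde A^\tau\tilde J_{z_i}\tilde A=\tilde J_{C^\tau z_i}$ on every component, but you must carry out this verification directly (it parallels the proof of Theorem~\ref{th:general} with $J_{z_j}$ replaced by $J_{C^\tau z_j}$ at the appropriate places) rather than invoke the theorem as a black box. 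Alternatively, use the semidirect decomposition of $\C(\n_{r,s})$ from Proposition~\ref{automorohisms from Clifford algebra} and Lemma~\ref{lem:semidirect}: define $\mathcal B$ separately on $\Aut^0(\n_{r,s})$ (where Theorem~\ref{th:general} applies directly) and on $\mathcal P(\Pin(r,s))$ (where $\varphi\in\Pin(r,s)\subset\Pin(r+\mu,s+\nu)$ gives $\mathcal B(\mathcal P(\varphi))=\mathcal P(\varphi)$ in the larger algebra), then check compatibility on the overlap. Either route closes the gap with little extra work.
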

Recall that the kernel of the map $\psi$
$$
\begin{array}{ccccc}
&\C(\n_{r,s}(U^{r,s})) &{\overset {\psi }{\longrightarrow}} &\Orth(r,s)
\\
& A\oplus C&\mapsto &C.
\end{array}
$$
is given by the inclusion $\iota\big(\Aut^0(\n_{r,s}(U^{r,s}))\big)\subset \C(\n_{r,s}(U^{r,s}))$, by Proposition~\eqref{automorohisms from Clifford algebra}.
\begin{corollary}\label{cor:kernel}
Let $U^{r,s}$ and $U^{r+\mu,s+\nu}=U^{r,s}\otimes V^{\mu,\nu}_{min}$ be admissible modules. 
Then 
\[
\Aut^0(\n_{r+\mu,s+\nu}((U^{r+\mu,s+\nu}))=\mathcal{B}\big(\Aut^0(\n_{r,s}(U^{r,s}))\big),
\]
that is the group $\Aut^0(\n_{r,s}(U^{r,s}))$ is invariant under the map $\mathcal{B}$ defined in~\eqref{eq:B}.
\end{corollary}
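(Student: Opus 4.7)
The plan is to verify the two inclusions separately, using Proposition~\ref{periodicity all} as the main tool. Recall that $\mathcal{B}$, as defined through the tensor decomposition~\eqref{tensor product rep to direct sum}, sends an automorphism $A\oplus C\in\C(\n_{r,s}(U^{r,s}))$ to an automorphism $\widetilde{A}\oplus\widetilde{C}\in\C(\n_{r+\mu,s+\nu}(U^{r+\mu,s+\nu}))$ where $\widetilde{C}$ acts on $\R^{r+\mu,s+\nu}=\R^{r,s}\oplus\R^{\mu,\nu}$ as $C\oplus\Id_{\R^{\mu,\nu}}$, and $\widetilde{A}$ is obtained from $A$ via the tensor product structure $U^{r+\mu,s+\nu}=U^{r,s}\otimes V^{\mu,\nu}_{min}$ on each summand of~\eqref{tensor product rep to direct sum}.

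For the inclusion $\mathcal{B}\bigl(\Aut^0(\n_{r,s}(U^{r,s}))\bigr)\subseteq\Aut^0(\n_{r+\mu,s+\nu}(U^{r+\mu,s+\nu}))$, I would start with $A\oplus\Id_{\R^{r,s}}\in\Aut^0(\n_{r,s}(U^{r,s}))$. By construction, the centre component of $\mathcal{B}(A\oplus\Id_{\R^{r,s}})$ is $\Id_{\R^{r,s}}\oplus\Id_{\R^{\mu,\nu}}=\Id_{\R^{r+\mu,s+\nu}}$, so the image already has the correct form for lying in $\Aut^0$. It remains to check that $\widetilde{A}^\tau J^{r+\mu,s+\nu}_w\widetilde{A}=J^{r+\mu,s+\nu}_w$ for every $w\in\R^{r+\mu,s+\nu}$; by linearity it suffices to verify this for the basis vectors $z_i\in\R^{r,s}$ and $\zeta_j\in\R^{\mu,\nu}$. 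For $z_i$ this follows from the identity $A^\tau J^{r,s}_{z_i}A=J^{r,s}_{z_i}$ together with the tensor structure (under which $\widetilde{A}$ acts as $A$ on the $U^{r,s}$ factor and trivially on the $V^{\mu,\nu}_{min}$ factor); for $\zeta_j$ the operator $J^{r+\mu,s+\nu}_{\zeta_j}$ acts essentially on the $V^{\mu,\nu}_{min}$ factor (permuting the summands of~\eqref{tensor product rep to direct sum}), hence commutes with $\widetilde{A}$, and the identity then follows from $J^{\mu,\nu}_{\zeta_j}$ being skew.

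For the reverse inclusion, I would take an arbitrary $\widetilde{A}\oplus\Id_{\R^{r+\mu,s+\nu}}\in\Aut^0(\n_{r+\mu,s+\nu}(U^{r+\mu,s+\nu}))$. Because the centre component is the full identity, it in particular fixes each $\zeta_j$, $j=1,\ldots,8$. The converse part of Proposition~\ref{periodicity all} then guarantees that the restriction $\widetilde{A}\vert_{E_0}\oplus\Id_{\R^{r,s}}$ defines an automorphism of $\n_{r,s}(E_0)$, where $E_0$ is the common $1$-eigenspace of $T_1,T_2,T_3,T_4$ viewed as a minimal admissible $\Cl_{r,s}$-module. Since the centre component is $\Id_{\R^{r,s}}$, this restriction lies in $\Aut^0(\n_{r,s}(U^{r,s}))$; and by the naturality of the construction underlying $\mathcal{B}$, its image under $\mathcal{B}$ reproduces $\widetilde{A}\oplus\Id_{\R^{r+\mu,s+\nu}}$.

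The main obstacle I anticipate is bookkeeping rather than mathematics: making precise how $\widetilde{A}$ decomposes across the summands of~\eqref{tensor product rep to direct sum} and checking that the action of each $J^{r+\mu,s+\nu}_{\zeta_j}$ indeed commutes with $\widetilde{A}$ on this decomposition. Once the identification $U^{r+\mu,s+\nu}=U^{r,s}\otimes V^{\mu,\nu}_{min}$ is fully spelled out, both inclusions reduce to a direct verification on the basis vectors of $\R^{r+\mu,s+\nu}$, and the corollary follows.
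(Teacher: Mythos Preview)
Your proposal is correct and follows precisely the route the paper intends: the corollary is stated without proof as an immediate consequence of Proposition~\ref{periodicity all}, and your two inclusions spell out exactly how that proposition yields the result. The only point worth tightening is the final step of the reverse inclusion, where you appeal to ``naturality'' to recover $\widetilde{A}$ from its restriction to $E_0$; this is justified because $\widetilde{A}$ commutes with each $T_i$ and with every $J_{\zeta_1}J_{\zeta_j}$ (Lemma~\ref{relation between volume form}), so its action on the remaining summands of~\eqref{tensor product rep to direct sum} is forced by $\widetilde{A}\vert_{E_0}$ via the same mechanism as in Theorem~\ref{th:general}.
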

Finally, we state the result of the periodicity of isomorphisms for the Lie algebras.
\begin{theorem}\label{th:periodic1}\cite[Theorem 4.6.1]{FM2}
The Lie algebras $\n_{r,s}(U^{r,s})$ and $\n_{s,r}(U^{s,r})$ are isomorphic if and only if the Lie algebras $\n_{r+\mu,s+\nu}(U^{r+\mu,s+\nu})$ and $\n_{s+\nu,r+\mu}(U^{s+\nu,r+\mu})$ are isomorphic for $(\mu,\nu)\in \{(8,0),(0,8),(4,4)\}$.
\end{theorem}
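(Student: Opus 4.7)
The plan is to prove both implications by lifting Lie algebra isomorphisms through the tensor product description of minimal admissible modules from Proposition~\ref{periodicity all}, exploiting the fact that the swap $(\mu,\nu)\mapsto(\nu,\mu)$ preserves the set $\{(8,0),(0,8),(4,4)\}$.

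For the direction ``$\Rightarrow$'', let $\Phi=A\oplus C$ be an isomorphism $\n_{r,s}(U^{r,s})\to\n_{s,r}(U^{s,r})$; by decomposing $U^{r,s}$ into minimal admissible components we may assume both modules are minimal. I would fix once and for all a base isomorphism $\Psi_{\mu,\nu}=B\oplus D\colon \n_{\mu,\nu}(V^{\mu,\nu}_{min})\to \n_{\nu,\mu}(V^{\nu,\mu}_{min})$, which exists for every $(\mu,\nu)\in\{(8,0),(0,8),(4,4)\}$: trivially when $(\mu,\nu)=(4,4)$, and by the $(8,0)\leftrightarrow(0,8)$ entries in Table~\ref{t:step1} otherwise. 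Using Proposition~\ref{periodicity all} to identify the minimal admissible modules $U^{r+\mu,s+\nu}$ and $U^{s+\nu,r+\mu}$ with $U^{r,s}\otimes V^{\mu,\nu}_{min}$ and $U^{s,r}\otimes V^{\nu,\mu}_{min}$, I would set
\[
\tilde\Phi\;:=\;(A\otimes B)\;\oplus\;(C\oplus D).
\]
The verification that $\tilde\Phi$ preserves the Lie bracket reduces to checking the Clifford compatibility on each orthonormal generator of the centre: those coming from $\R^{r,s}$ are absorbed by $\Phi$ and those coming from $\R^{\mu,\nu}$ by $\Psi_{\mu,\nu}$, because in the periodicity isomorphism $\Cl_{r,s}\otimes\Cl_{\mu,\nu}\cong\Cl_{r+\mu,s+\nu}$ from~\eqref{eq:Bott_periodicity} each generator acts as a pure tensor, up to a twist by the volume form $\Omega^{\mu,\nu}$, which squares to $+1$ in our three cases and is preserved by the factor isomorphism $B$.

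For the direction ``$\Leftarrow$'', suppose $\tilde\Phi=\tilde A\oplus\tilde C$ is an isomorphism of the extended algebras. By the second half of Proposition~\ref{periodicity all}, the common $1$-eigenspace $E_0$ of the involutions $T_1,\dots,T_4$ from Example~\ref{ex:inv44} sits inside $U^{r+\mu,s+\nu}$ as a minimal admissible $\Cl_{r,s}$-module, and similarly on the target side. The plan is to pre- and post-compose $\tilde\Phi$ with elements of $\mathcal P\bigl(\Pin(r+\mu,s+\nu)\bigr)$ and $\mathcal P\bigl(\Pin(s+\nu,r+\mu)\bigr)$, which surject onto the full orthogonal groups of the centres by Proposition~\ref{prop:coverings}, so as to arrange that $\tilde C$ carries the orthogonal splitting $\R^{r+\mu,s+\nu}=\R^{r,s}\oplus\R^{\mu,\nu}$ to the splitting $\R^{s+\nu,r+\mu}=\R^{s,r}\oplus\R^{\nu,\mu}$ in the form $C_0\oplus D$ with $C_0\in\Orth(r,s)$, and in such a way that the modified $\tilde A$ intertwines the two systems of involutions $T_i$ on either side. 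Once this alignment is achieved, restricting the adjusted $\tilde A$ to $E_0$ and the adjusted $\tilde C$ to $\R^{r,s}$ yields the desired isomorphism $\n_{r,s}(U^{r,s})\to\n_{s,r}(U^{s,r})$.

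The main obstacle I anticipate is the alignment step in the converse direction: one has to show that after conjugation by an appropriate element of $\mathcal P(\Pin)$, the given $\tilde\Phi$ can be made to respect simultaneously the splitting of the centre and the specific involution system of Example~\ref{ex:inv44}. I would reduce this to transitivity of the Clifford group action on ordered orthonormal frames of $\R^{\mu,\nu}$ with the sign pattern needed to build the $T_i$, which can be verified directly by composing the reflections $\widetilde{\Ad}_{z}$ and again invoking the surjectivity statement of Proposition~\ref{prop:coverings}.
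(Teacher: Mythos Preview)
The paper does not prove this theorem: it is quoted from \cite[Theorem~4.6.1]{FM2} and stated here without argument, so there is no proof in the present text to compare your proposal against.

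On the proposal itself: the forward direction is the right construction, but your reduction ``we may assume both modules are minimal'' is unjustified and unnecessary. An isomorphism $A\oplus C$ need not respect the decomposition of $U^{r,s}$ into minimal admissible summands, so you cannot reduce that way; on the other hand, by the convention implicit in Proposition~\ref{periodicity all} and Corollary~\ref{cor:kernel} the module $U^{r+\mu,s+\nu}$ \emph{is} $U^{r,s}\otimes V^{\mu,\nu}_{min}$, so $A\otimes B$ is defined directly on the full module and no reduction is needed. The bracket check then goes through as you indicate, once you track that in the periodicity isomorphism the generators from $\R^{r,s}$ act through a twist by the volume form and that $B$ carries $\Omega^{\mu,\nu}$ to $\pm\Omega^{\nu,\mu}$.

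For the converse, your alignment strategy is workable and the obstacle is milder than you fear. Transitivity of $\Orth(s+\nu,r+\mu)$ on orthonormal frames of the correct signature lets you post-compose so that $\tilde C$ respects the splitting $\R^{s,r}\oplus\R^{\nu,\mu}$ and, moreover, restricts to a prescribed isometry on the $\R^{\nu,\mu}$ factor. Once $\tilde C$ carries each $\zeta_j$ to the corresponding generator on the target side, the intertwining of the involutions $T_i$ by $\tilde A$ is automatic from the isomorphism relation $\tilde A^{\tau}J_{\tilde C(z)}\tilde A=J_z$ (the analogue of Lemma~\ref{relation between volume form} for isomorphisms), since each $T_i$ is a product of four generators. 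So the second alignment condition you list is a consequence of the first, not an independent obstruction.
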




\section{Definition of classical groups}\label{sec:groupG}

We aim to determine the subgroups $\mathbb A$ of $\SL(n,\mathbb R)$ such that if $A\in \mathbb A$, then $A\oplus \Id \in\Aut^0(\n_{r,s}(U)$. In what follows we will identify $\mathbb A$ and $\Aut^0(\n_{r,s}(U)$.
The maps $A\colon U\to U$ are linear maps over the field of real
numbers. From the other side the admissible modules $U$ carry complex
or quaternion structures such that the map $A$ commutes with
them. Thus, the map $A$ has to be linear with respect to these
additional algebras.
We recall the algebras $\mathbb C$, $\mathbb H$ and some useful embeddings into the space of real matrices.


\subsection{Algebras over $\mathbb R$}\label{sec:algebras}


We write 
$
\lambda=a+b{\bf i}$, ${\bf i}^2=-1$, 
for $\lambda\in \CC$ and 
$
h=a+b{\bf i}+c{\bf j}+d{\bf k}$ for $h\in\HH$. Recall that 
\begin{equation}\label{eq:AlmostQuaternion}
{\bf i}^2={\bf j}^2={\bf k}^2={\bf i}{\bf j}{\bf k}=-1.
\end{equation}

We describe here the embeddings of the algebras $\mathbb F=\CC,\HH$ and square matrices $M(n,\mathbb F)$ into the set of real square matrices $M(n,\mathbb R)$ and complex square matrices $M(n,\mathbb C)$, respectively.
We define an embedding 
\begin{equation}\label{eq:phoCC}
\begin{array}{ccllllll}
\rho_{\mathbb{C}}\colon&\mathbb{C}&\to&M(2,\mathbb{R})
\\
&\lambda=a+b{\bf i}&\mapsto& \begin{pmatrix}a&-b\\b&a\end{pmatrix}.
\end{array}
\end{equation}
Then naturally we have 
$$
A=\rho_{\CC}(A_{\CC})=\rho_{\CC}\Big(\begin{pmatrix}\lambda_{11}&\lambda_{12}
\\
\lambda_{21}&\lambda_{22}\end{pmatrix}\Big)=
\begin{pmatrix}
a_{11}&-b_{11}&a_{12}&-b_{12}
\\
b_{11}&a_{11}&b_{12}&a_{12}
\\
a_{21}&-b_{21}&a_{22}&-b_{21}
\\
b_{21}&a_{21}&b_{22}&a_{22}
\end{pmatrix}
$$
for $\lambda_{kl}=a_{kl}+b_{kl}\mathbf{i}$. The map $\rho_{\mathbb{C}}$ is the algebra
homomorphism:
\begin{equation*}
\rho_{\mathbb{C}}(A_{\CC}B_{\CC})=\rho_{\mathbb{C}}(A_{\CC}) \rho_{\mathbb{C}}(B_{\CC})=AB,
\end{equation*}
$$
\rho_{\mathbb{C}}(\overline{\lambda})=\big(\rho_{\mathbb{C}}(\lambda)\big)^T,\quad \lambda\in\mathbb{C},
$$
$$
\rho_{\mathbb{C}}(\overline{A_{\CC}}^T)=\big(\rho_{\mathbb{C}}(A_{\CC})\big)^T=A^T,\quad A_{\CC}\in M(n,\mathbb{C}),
$$
where superscript $A^T$ denotes the transposition of $A$.
Note also that if we denote by $\diag_nL$ a block-diagonal real matrix with the blocks $L$ on the diagonal, then
\begin{equation}\label{eq:conjugation1}
\diag_n\begin{pmatrix}
1&0\\0&-1
\end{pmatrix}
\rho_{\CC}(A_{\CC})
\diag_n\begin{pmatrix}
1&0\\0&-1
\end{pmatrix}=
\rho_{\CC}(\bar A_{\CC}).
\end{equation}

A quaternion number can be expressed by using the complex numbers by 
$$
h=a+b{\bf i}+c{\bf j}+d{\bf k}=\lambda+{\bf j}\mu, \quad \lambda=a+b{\bf i},\quad \mu=c+d{\bf i}.
$$
with the conjugation 
$
\bar h=a-b{\bf i}-c{\bf j}-d{\bf k}=\bar \lambda-{\bf j}\bar\mu$.
Thus we define 
\begin{equation*}
\begin{array}{ccllllll}
 \rho_{\mathbb{H}}\colon&\mathbb{H}&\to& M(2,\mathbb{C})
\\
&h=\lambda +{\bf j}\mu &\mapsto &\begin{pmatrix}\lambda&-\overline{\mu}\\\mu&\overline{\lambda}\end{pmatrix}.
\end{array}
\end{equation*}
Consider the space $\HH^n$ as right quaternion space.  Thus, $A_{\HH}(vh) = (A_{\HH}v)h$ for $h\in\HH$, a quaternion column vector $v\in\HH^n$ and quaternion matrix $A_{\HH}$. 
The column vector $h=(h_1,\ldots, h_n)^T\in\HH^n$ with $h_l=\lambda_l+{\mathbf j}\mu_l$ will be represented by the column vector $(\lambda_1,\ldots,\lambda_n,\mu_1,\ldots,\mu_n)^T\in\CC^{2n}$.
Then the quaternion matrix $Q_{\HH}\in M(n,\HH)$ written as $Q_{\HH}=\Lambda_{\CC}+{\bf j}\Psi_{\CC}$ with $\Lambda_{\CC},\Psi_{\CC}\in M(n,\CC)$ will be represented as
$$
\rho_{\HH}(Q_{\HH})=\begin{pmatrix}\Lambda_{\CC}&-\overline{\Psi_{\CC}}\\ \Psi_{\CC}&\overline{\Lambda_{\CC}}\end{pmatrix}\in M(2n,\CC).
$$
This representation is convenient by the following reason: if $\HH\ni h=\lambda + {\bf j}\mu$ is given as a column vector $\begin{pmatrix}\lambda\\ \mu\end{pmatrix}$, then multiplication from the left by a complex matrix representation of a quaternion produces a new column vector representing the correct quaternion. The map $\rho_{\mathbb{H}}$ is also the algebra
homomorphisms:
\begin{equation*}
\rho_{\mathbb{H}}(A_{\HH} B_{\HH})=\rho_{\mathbb{H}}(A_{\HH})\rho_{\mathbb{H}}(B_{\HH}),
\end{equation*}
$$
\rho_{\mathbb{H}}(\overline{h})=\overline{\big(\rho_{\mathbb{H}}(h)\big)}^T,\quad h\in\mathbb{H}
$$
$$
\rho_{\mathbb{H}}(\overline{B_{\HH}}^T)=\overline{\big(\rho_{\mathbb{H}}(B_{\HH})\big)}^T,\quad B_{\HH}\in M(n,\mathbb{H}).
$$

We recall the following definitions of the classical groups that will be used in the sequel. 
\noindent The \textbf{general linear group} $\GL(n,\F)$ of degree $n$ over the fields $\mathbb{F}=\R,\CC$ is 
\[
\GL(n, \mathbb{F}) \colonequals \{ M \in M(n, \mathbb{F}) \mid M \text{ is invertible} \}.
\] 

\noindent The \textbf{general orthogonal group} $\Orth(n, \mathbb{F})$ over the fields $\mathbb{F}=\mathbb{R},\mathbb{C}$ is
\[
\Orth(n, \mathbb{F})\colonequals \lbrace M \in \GL(n, \mathbb{F}) \mid  M^T M= \Id_n \rbrace, 
\]
where $\Id_n$ is the $(n \times n)$ identity matrix. In the case $\mathbb F=\mathbb R$ we also use the pseudo-orthogonal group
$\Orth(p,q)$ 
\[
\Orth(p,q)\colonequals \lbrace M \in \GL(p+q, \mathbb{R}) \mid  M^T \Id_{p,q} M= \Id_{p,q}\rbrace,\qquad
\Id_{p,q}= \begin{pmatrix}
\Id_p & 0 \\
0 & -\Id_q \\
\end{pmatrix}.
\]
All the groups over $\R$ preserving a symmetric bilinear form of index $(p,q)$ are isomorphic to $\Orth(p,q)$. The groups over $\CC$ preserving a symmetric bilinear form of index $(p,q)$ are isomorphic to $\Orth(n,\CC)$ with $n=p+q$, see~\cite[Chapter 3.1]{Rossmann}. 

The \textbf{symplectic group} $\Sp(2n, \F)$ of degree $2n$  over the fields $\F=\R,\CC$ is
\[
\Sp(2n, \mathbb{F}):= \lbrace M \in \GL(2n, \mathbb{F}) \mid M^T \Omega_n M= \Omega_n \rbrace, \qquad
\Omega_n = \begin{pmatrix}
0 &  -\Id_n \\
\Id_n & 0 \\
\end{pmatrix}.
\]
All the groups preserving a skew-symmetric bilinear form are isomorphic to $\Sp(2n, \mathbb{F})$. 

The \textbf{general unitary group} $\U(p,q)$ of degree $n$ is
\[
\U(p, q) \colonequals \lbrace M \in \GL(n, \mathbb{C}) \mid \overline{M}^T \Id_{p,q} M= \Id_{p,q} \rbrace.
\]
The subgroup $\U(p,0)\subset \U(p,q)$ is denoted by $\U(p)$. Note that from a qualitative point of view, consideration of skew-Hermitian forms (up to isomorphism) provides no new groups, since the multiplication by ${\bf i}$ renders a skew-Hermitian form Hermitian, and vice versa. Thus only the Hermitian case needs to be considered.

Now we turn to define the groups over the algebra $\mathbb F=\HH$. 
Under the identification described above 
$$
\GL(n,\HH)=\{M\in \GL(2n,\CC)\mid\ \Omega_n M=\overline M\Omega_n,\quad \det M\neq 0\}
$$
$$
\SL(n,\HH)=\{M\in \GL(n,\HH)\mid\ \det M=1\}
$$
\begin{eqnarray*}
\Sp(p,q)&=&\{M\in \GL(n,\HH)\mid\ \overline M^T \Id_{p,q}M=\Id_{p,q},\ p+q=n\}
\\
&=&
\Big\{M\in \GL(2n,\CC)\mid\ \overline M^T\diag\begin{pmatrix}\Id_{p,q}&0\\0&\Id_{p,q}\end{pmatrix} M=\diag\begin{pmatrix}\Id_{p,q}&0\\0&\Id_{p,q}\end{pmatrix}\Big\}.
\end{eqnarray*}
The group $\Sp(p,q)$ is called {\bf quaternionic unitary group}. 
If $p=0$ or $q=0$, then $\Sp(0,p)\cong\Sp(p,0)$ is denoted by $\U(n,\HH)$ and called {\bf hyperunitary group.} The reason for the notation $\Sp(p,q)$ is that this group can be represented, as a subgroup of $\Sp(n, \CC)$ preserving an Hermitian form of signature $(2p, 2q)$. 

The last group is the {\bf quaternionic orthogonal group} denoted by $\Orth^*(2n)=\Orth(n,\HH)$ and it is defined by
\begin{eqnarray*}
\Orth^*(2n)=\Orth(n,\HH)&=&\{M\in \GL(n,\HH)\mid\ \bar M^T \diag_n {\mathbf j} M=\ \diag_n {\mathbf j}\}
\\
&=&
\{M\in \GL(2n,\CC)\mid\ \bar M^T\Omega_n M=\Omega_n\}.
\end{eqnarray*}
Here ${\mathbf j}$ is the quaternionic unit represented by $\rho_{\HH}({\mathbf j})\in\begin{pmatrix}0&-1\\1&0\end{pmatrix}\in M(2,\CC)$. The definition of $\Orth^*(2n)=\Orth(n,\HH)$ can be given equivalently as
\begin{eqnarray*}
\Orth^*(2n)=\Orth(n,\HH)&=&\{M\in \GL(n,\HH)\mid\ \bar M^T \diag_n {\mathbf i} M=\ \diag_n {\mathbf i}\}
\\
&=&\{M\in \GL(n,\HH)\mid\ \bar M^T \diag_n {\mathbf k} M=\ \diag_n {\mathbf k}\}. 
\end{eqnarray*}
This is true due to the fact that by conjugation with some $h,\tilde h\in \Sp(1)$ we can get $h {\mathbf i}h^{-1}={\mathbf j}$ and analogously $\tilde h {\mathbf k}\tilde h^{-1}={\mathbf j}$.
The group $\Orth^*(2n)=\Orth(n,\HH)$ can be viewed as a subgroup of $\Orth(2n,\CC)$ that preserves an Hermitian form of index $(n,n)$. Particularly if $n=1$, then one needs to check the condition
$$
\begin{pmatrix}
\bar \lambda&\bar\mu
\\
-\mu&\lambda
\end{pmatrix}
\begin{pmatrix}
0&-1
\\
1&0
\end{pmatrix}
\begin{pmatrix}
\lambda&-\bar\mu
\\
\mu&\bar\lambda
\end{pmatrix}=
\begin{pmatrix}
0&-1
\\
1&0
\end{pmatrix},
$$ 
with $\lambda=a+{\mathbf i}b$, $\mu=c+{\mathbf i}d$. It leads to the solution of the system
$$
\begin{cases}
\im(\bar \lambda\mu)=0
\\
\lambda^2+\mu^2=1
\end{cases}\quad\Longrightarrow \quad
\begin{cases}
ad=bc
\\
ab+cd=0
\\
a^2-b^2+c^2-d^2=1
\end{cases}\quad\Longrightarrow \quad
\begin{cases}
a^2+c^2=1
\\
b=d=0.
\end{cases}
$$
Thus
$$
\begin{pmatrix}
\lambda&-\bar\mu
\\
\mu&\bar \lambda
\end{pmatrix}=
\begin{pmatrix}
a&-c
\\
c&a
\end{pmatrix}=\alpha=a+{\mathbf i}c\quad\text{and}\quad
a^2+c^2=|\alpha|^2=1
$$
Thus we conclude that $\Orth^*(2)\cong \U(1)$.


\section{Determination of $\Aut^0(\n_{r,s}(U))$}\label{Appendix0}



\subsection{Integral basis}


\begin{definition}
We fix the standard orthonormal basis $\{z_{k}\}$ for
$\mathbb{R}^{r,s}$. 
Then we call a basis $\{x_{i}\}$ of the minimal
admissible module $V_{min}$, an integral basis with respect to the
orthonormal basis $\{z_{k}\}$, if it
satisfies the conditions that
\begin{itemize}
\item {the basis $\{x_{i}\}$ is orthonormal with respect to the admissible scalar product,}
\item {for any $z_{k}$ and $x_{i}$, there exists a unique $x_{j}$ such
that either $J_{z_{k}}(x_{i})=x_{j}$ or $J_{z_{k}}(x_{i})=-x_{j}$.}
\end{itemize}
\end{definition}

One way to construct such a basis is given by taking a suitable vector
from $E_{r,s}$ and choosing an orthonormal basis of $V^{r,s}_{min}$ from the vectors
\[
\{v, \pm J_{z_{k}}v,\ldots,
\pm J_{z_{k_{1}}}J_{z_{k_{2}}}\ldots J_{z_{k_{l}}}v,\ldots,
\pm J_{z_{1}}J_{z_{2}}\ldots J_{z_{r+s}}v,\ 1\leq k_{1}<\ldots<
k_{l}\leq r+s
\}
\]
The choice of the basis is not unique. Nevertheless, once we fix a
basis, we denote by $\eta$ the matrix of the admissible scalar
product. Thus either $\eta=\Id_{2n}$ or
$\eta=\begin{pmatrix}\Id_n&0\\0&-\Id_n\end{pmatrix}$ according to the
ordering of positive vectors to negative vectors of the fixed integral basis.
The construction of the integral basis can be found in~\cite{FM}.

Recall that $J_{z_i}^{\tau}$ is the transposition with respect to an admissible scalar product $\la.\,,.\ra$, and $J_{z_i}^T$ the transposition with respect to a standard Euclidean scalar product. The relation between two transpositions is
$
J_{z_i}^{\tau}=\eta J_{z_i}^T\eta$.
\begin{lemma}\label{lem:symplectic}
If $ J_{z_i}^{\tau}=- J_{z_i}$, $ J_{z_i}^2=\pm\Id$, $i=1,2,3$, $ J_{z_i} J_{z_j}=- J_{z_j} J_{z_i}$, $i,j=1,2,3$, $i\neq j$, and $\eta^T=\eta$, $\eta^2=\Id$ is non-degenerate bi-linear form, then
\begin{itemize}
\item[(1.)]{
$(\eta J_{z_i})^T=-\eta J_{z_i}$;}
\item[(2.)]{$(\eta J_{z_k})^2=-\Id$;}
\item[(3.)]{$\eta J_{z_i}=
\begin{cases}
- J_{z_i}\eta\quad&\text{if}\quad  J_{z_i}^2=\Id
\\
 J_{z_i}\eta\quad&\text{if}\quad  J_{z_i}^2=-\Id
\end{cases}$;}
\end{itemize}
\end{lemma}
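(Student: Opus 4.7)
The plan is to exploit the conversion formula $J_{z_i}^{\tau}=\eta J_{z_i}^T\eta$ between the admissible and Euclidean transpositions, which was recalled just before the lemma. Combined with the admissibility $J_{z_i}^{\tau}=-J_{z_i}$ and the involution property $\eta^2=\Id$, this immediately gives the master identity
\begin{equation*}
J_{z_i}^T\eta=-\eta J_{z_i},\qquad\text{equivalently}\qquad J_{z_i}^T=-\eta J_{z_i}\eta,
\end{equation*}
which will drive everything else. I would state this identity at the outset and then derive (1), (3), (2) in that order.

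For part (1), I simply compute $(\eta J_{z_i})^T=J_{z_i}^T\eta^T=J_{z_i}^T\eta$, and the master identity turns this into $-\eta J_{z_i}$. This step requires only $\eta^T=\eta$, $\eta^2=\Id$, and admissibility, and it does not see $J_{z_i}^2$ at all.

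For part (3) I need the extra input that we are working in an integral basis. There the basis vectors are orthonormal with respect to the (Euclidean) background scalar product and each $J_{z_i}$ sends basis vectors to $\pm$ basis vectors, so $J_{z_i}$ is a signed permutation matrix and hence Euclidean-orthogonal: $J_{z_i}^T=J_{z_i}^{-1}$. Combined with $J_{z_i}^2=\pm\Id$ this gives
\begin{equation*}
J_{z_i}^T=\begin{cases} J_{z_i}&\text{if }J_{z_i}^2=\Id,\\ -J_{z_i}&\text{if }J_{z_i}^2=-\Id.\end{cases}
\end{equation*}
Plugging each case into the master identity $J_{z_i}^T\eta=-\eta J_{z_i}$ yields exactly the two commutation rules claimed in (3). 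This is the step where I expect the only subtle point: one has to make the integral-basis assumption explicit, since without orthogonality of $J_{z_i}$ the two cases of (3) cannot be separated.

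For part (2), I would use (3) to slide one copy of $\eta$ past $J_{z_i}$. When $J_{z_i}^2=\Id$, $(\eta J_{z_i})^2=\eta J_{z_i}\eta J_{z_i}=(-J_{z_i}\eta)\eta J_{z_i}=-J_{z_i}^2=-\Id$; when $J_{z_i}^2=-\Id$, $(\eta J_{z_i})^2=(J_{z_i}\eta)\eta J_{z_i}=J_{z_i}^2=-\Id$. Either way $(\eta J_{z_i})^2=-\Id$. Together with (1), this says $\eta J_{z_i}$ is skew-symmetric and squares to $-\Id$, which is exactly the symplectic/complex-structure package the lemma's name suggests. I notice that the anticommutation relations $J_{z_i}J_{z_j}=-J_{z_j}J_{z_i}$ for $i\neq j$ play no role in the three conclusions, so I would not use them, and the restriction $i=1,2,3$ is also inessential for the argument itself.
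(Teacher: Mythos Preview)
Your proof is correct and follows essentially the same overall strategy as the paper (both hinge on the identity $J_{z_i}^T\eta=-\eta J_{z_i}$ and both invoke the integral basis at the one place where $J_{z_i}^2=\pm\Id$ must be separated), but the internal order and the integral-basis step differ.

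The paper proves (1), then (2), then (3), whereas you do (1), (3), (2). For the key basis-dependent step the paper establishes (2) directly by a four-case check on integral basis vectors $x_j$, splitting according to the sign of $\la x_j,x_j\ra$ and of $J_{z_i}^2$, and then derives (3) from (2) via $\eta J_{z_i}\eta J_{z_i}=-\Id\Rightarrow J_{z_i}\eta J_{z_i}^2=-\eta J_{z_i}$. You instead observe once and for all that in an integral basis each $J_{z_i}$ is a signed permutation matrix, hence Euclidean-orthogonal, so $J_{z_i}^T=J_{z_i}^{-1}=\pm J_{z_i}$; this gives (3) immediately from the master identity, and (2) then follows by sliding $\eta$ past $J_{z_i}$. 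Your route avoids the explicit four-case enumeration and makes transparent why the integral basis is needed (orthogonality of $J_{z_i}$ for the Euclidean form), while the paper's route has the virtue of being entirely elementwise and not appealing to the permutation-matrix picture. Your remark that the anticommutation hypotheses and the restriction to $i=1,2,3$ are not used is accurate; the paper does not use them either.
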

\begin{proof}
(1.) We obtain $
(\eta  J_{z_i})^T= J_{z_{z_i}}^T\eta^T=-\eta  J_{z_i}.
$ from $\eta  J_{z_i}=- J^T_{z_i}\eta$.

(2.) We consider four cases.

(a) Let $J_{z_i}^2=-\Id$ and $x_j$ an element of the integral basis such that $\la x_j,x_j\ra>0$. Then $\eta J_{z_i}x_j=J_{z_i}x_j$
$$
(\eta J_{z_{i}})^2(x_{j})=\eta J_{z_i}^2x_j=-x_j.
$$

(b) Let $J_{z_i}^2=-\Id$ and $\la x_j,x_j\ra<0$. Then $\eta J_{z_i}x_j=-J_{z_i}x_j$
$$
(\eta J_{z_{i}})^2(x_{j})=-\eta J_{z_i}^2x_j=\eta x_j=-x_j.
$$

(c) Let $J_{z_i}^2=\Id$ and $\la x_j,x_j\ra>0$. Then $\eta J_{z_i}x_j=-J_{z_i}x_j$
$$
(\eta J_{z_{i}})^2(x_{j})=-\eta J_{z_i}^2x_j=-\eta x_j=-x_j.
$$

(d) Let $J_{z_i}^2=\Id$ and $\la x_j,x_j\ra<0$. Then $\eta J_{z_i}x_j=J_{z_i}x_j$
$$
(\eta J_{z_{i}})^2(x_{j})=\eta J_{z_i}^2x_j=\eta x_j=-x_j.
$$

(3.) The property $\eta J_{z_i}\eta J_{z_i}=-\Id$ implies
$J_{z_i}\eta J_{z_i}^2=-\eta J_{z_i}$.
\end{proof}


\subsection{Description of the procedure  of determination of $\Aut^0(\n_{r,s}(U))$}\label{procedure}


Step 1. We determine the groups $\Aut^0(\n_{r,s}(V^{r,s}))$ for the
basic cases~\eqref{basic cases}. According to Corollary~\ref{cor:kernel} it provides the groups for all range of $(r,s)$. Thus, the next steps are explained only for basic cases.
\\

Step 2. We determine the groups $\Aut^0(\n_{r,s}(V^{r,s}_{min}))$ for minimal admissible modules.
\begin{itemize}
\item[2.1] We find $PI_{r,s}$ the sets of involutions of all types (1)-(5) and their subsets $P_{r,s}^*\subset PI_{r,s}$ that are involutions of types (1)-(3). We write $P_k$ for the operators from $PI_{r,s}$. We denote by $E_{r,s}^*$ the common $1$-eigenspace of involutions from $P_{r,s}^*$ and $E_{r,s}$ the common $1$-eigenspace of involutions from $P_{r,s}$. We find operators that commute with all involutions from $P_{r,s}^*$. These operators will leave the space $E_{r,s}^*$ invariant. Among these operators we denote by $\ii$ the almost complex structure, and by $\ii,\jj,\kk$ the almost quaternion structure, i.e. the operators satisfying~\eqref{eq:AlmostQuaternion} and the product of even number of maps $J_{z_k}$. We use the notation $\QQ$ for the negative operator $\QQ=J_{z_i}J_{z_j}$ such that $\QQ^2=\Id$. Apart of mentioned operators it could be at most one more, denoted by $\PP$ that is a product of an even number of $J_{z_k}$ commuting with all involutions from $P_{r,s}^*$. All these operators will be indicated for each case. We denote by $A$ an operator on $P_{r,s}^*$ that defines $\Aut^0(\n_{r,s}(V^{r,s}_{min}))$ by means of relations~\eqref{isomorphism relation}.
\item[2.2] We choose an integral basis generated from a vector $v\in E_{r,s}$, $\la v,v\ra_{E_{r,s}}=1$. Here we emphasise that $E_{r,s}\subset E_{r,s}^*$ is the common $1$-eigenspace of all types of involutions from $PI_{r,s}$. The details of the construction of the integral basis can be found in~\cite{FM1}. The basis of $E_{r,s}^*$ will be indicated for each case in a table. We use the black colour to denote the basis vectors $x_k$ such that $\la x_k,x_k\ra_{E_{r,s}^*}=1$ and by red colour the basis vectors $x_l$ such that $\la x_l,x_l\ra_{E_{r,s}^*}=-1$.
\item[2.3] In this step we distinguish 6 possible collections of operators on $E_{r,s}^*$ that leave it invariant.
\begin{itemize}
\item[2.3.1] {\bf The set $E_{r,s}^*$ has neither complex, quaternion structure, no operator $\QQ$.} In this case the operator $A\colon E_{r,s}^*\to E_{r,s}^*$ is real. In the presence of an operator $\PP$ we check the condition~\eqref{isomorphism relation}, that we write in the form:
\begin{equation}\label{eq:etaPP}
A^T\eta\PP A=\eta\PP.
\end{equation}
These are the cases 
$$
(r,s)\in\{(1,0),(0,1),(7,0),(0,7),(8,0),(0,8),(3,4),(4,3),(4,4)\}.
$$
\item[2.3.2] {\bf The set $E_{r,s}^*$ has only a complex structure, but neither quaternion structure, no operator $\QQ$.} Since $A$ commutes with $\ii$ we conclude that $A\in\GL(k,\CC)$, where $k=\dim_{\CC}(E_{r,s}^*)$. If there is no operator $\PP$ on $E_{r,s}^*$, then $\Aut^0(\n_{r,s}(V^{r,s}_{min}))=\GL(k,\CC)$. Otherwise we check the condition~\eqref{eq:etaPP}. There are two options: if the map $\eta\PP$ is complex liner ($\eta\PP$ commutes with $\ii$), then 
$$\Aut^0(\n_{r,s}(V^{r,s}_{min}))\cong\Sp(k,\CC)\quad\text{or}\quad
\Aut^0(\n_{r,s}(V^{r,s}_{min}))\cong\U(k). 
$$ If the $\eta\PP$ is not complex linear, then $\Aut^0(\n_{r,s}(V^{r,s}_{min}))\cong\Orth(k,\CC)$. These are the cases 
$$
(r,s)\in\{(2,0),(0,2),(6,0),(0,6),(2,4),(4,2),(3,5),(5,3),(7,1),(1,7)\}.
$$
\item[2.3.3] {\bf The set $E_{r,s}^*$ has quaternion structure, and has no operator $\QQ$.} Since $A$ commutes with $\ii,\jj,\kk$ we conclude $A\in\GL(k,\HH)$, where $k=\dim_{\HH}(E_{r,s}^*)$. All the operators $\eta\PP$ will be quaternion linear and by checking~\eqref{eq:etaPP} we make the conclusions in the cases
$$
(r,s)\in\{(3,0),(0,3),(4,0),(0,4),(5,0),(0,5),(4,1),(1,4),(5,2),(2,5),
$$
$$(6,1),(1,6),(6,2),(2,6),(6,3),(3,6), (7,2),(2,7)\}.
$$
\item[2.3.4] {\bf The set $E_{r,s}^*$ has operator $\QQ$ and neither has complex no quaternion structure.} In the presence of the operator $\QQ$ we decompose $E_{r,s}^*$ into eigenspaces of the involution $\QQ$ that we denote by $N_{\pm}$. Thus $E_{r,s}^*=N_+\oplus N_-$. Since $A$ commutes with $\QQ$, we get $A=A_+\oplus A_-$, where $A_{\pm}\colon N_{\pm}\to N_{\pm}$. We check~\eqref{eq:etaPP} and make the conclusion. Since in this case there are no other conditions on $A_{\pm}$ the group $\Aut^0(\n_{r,s}(V^{r,s}_{min}))$ will be given by a direct product of subgroups of $\GL(k,\R)$ with $k=\dim(N_{\pm})$. These are the cases 
$$
(r,s)\in\{(1,1),(3,3)\}.
$$
\item[2.3.5] {\bf The set $E_{r,s}^*$ has a complex structure and operator $\QQ$ but does not have a quaternion structure.} We start from the decompositions $E_{r,s}^*=N_+\oplus N_-$ and $A=A_+\oplus A_-$. In all these cases we have $\QQ\ii=-\ii \QQ$ and therefore we define $A_-=-\ii A_+\ii$. If it needs, we check~\eqref{eq:etaPP} on $N_+$ and make the conclusions. These are the cases 
$$
(r,s)\in\{(2,2),(3,2),(2,3),(2,1),(1,2)\}.
$$
\item[2.3.6] {\bf The set $E_{r,s}^*$ has a quaternion structure and the operator $\QQ$.} We start from the decompositions $E_{r,s}^*=N_+\oplus N_-$ and $A=A_+\oplus A_-$. The result depends on the situation whether $N_+$ carries the complex or quaternion structure. 
These are the cases 
$$
(r,s)\in\{(3,1),(1,3),(5,1),(1,5),(7,3),(3,7)\}.
$$
\end{itemize}
\item[2.4] Having in hands the operator $A\colon E_{r,s}^*\to E_{r,s}^*$, we can extend it to the operator $\overline A\colon V^{r,s}_{min}\to V^{r,s}_{min}$. The operator $\overline A$ is completely and uniquely determined by the operator $A$ according to Theorem~\ref{th:general}. To match the notation of the present description and Theorem~\ref{th:general} we note that $E_{r,s}^*=E^1$ and $A=A^1$ in Theorem~\ref{th:general}. The operators $G_I$ used for the construction of $\overline A$ are indicated for all the cases in tables. We emphasise that we present only some of the operators $G_I$, since the extention of $\overline A$ from $A$ does not depend on the choice of a specific operator $G_I$, but only on its existence. The map $\overline A$ will satisfy~\eqref{isomorphism relation} by Theorem~\ref{th:general}.  
Thus the group $\Aut^0(\n_{r,s}(V^{r,s}_{min}))$ is already defined in item 2.3.
\end{itemize}
We proceed to the next step.
\\

Step 3. We determine the groups $\Aut^0(\n_{r,s}(V^{r,s}))$ for arbitrary admissible modules $V^{r,s}=\oplus V^{r,s}_{min}$. It follows from the following procedure. We decompose the module $V^{r,s}$ into the orthogonal direct sum~\eqref{eq:isotyp} of minimal admissible modules following the classification of Theorem~\ref{main theorem 1}.
We write $V^{r,s}\supset E=\oplus^{l=p}_{l=1}(E_{r,s}^*)_l$, where $(E_{r,s}^*)_l\subset (V_{min}^{r,s})_l$. In each $(E_{r,s}^*)_l$ will be chosen a vector $v_l$, with $\la v_l,v_l\ra_{(E_{r,s})_l}=\pm 1$, generating an orthonormal basis on $(V_{min}^{r,s})_l$.  We draw the attention of the reader to the fact that $\la v_l,v_l\ra_{(E_{r,s})_l}= 1$ if $(E_{r,s})_l\in (V^{r,s;+}_{min})_l$, $\la v_l,v_l\ra_{(E_{r,s})_l}= -1$ if $(E_{r,s})_l\in (V^{r,s;-}_{min})_l$ and always $\la v_l,v_l\ra_{(E_{r,s})_l}= 1$ for $(E_{r,s})_l\in (V^{r,s;N}_{min})_l$.
We write $v=\oplus^{l=p}_{l=1} v_l$ for the generating vector on $E\subset V^{r,s}$. The result for $E\subset V^{r,s}$ is the direct sum of the results for $(E_{r,s}^*)_l\subset (V_{min}^{r,s})_l$, $l=1,\ldots,p$, that will allow us to make the conclusion in each case.

We list the final result of the determination and then we proceed to consider case by case.

\begin{table}[h]
\centering
\caption{Groups $\Aut^0(\n_{r,s}(U)$}
\scalebox{0.6}{
\begin{tabular}{| c || c| c| c| c| c| c| c|	c| c|}
\hline
8 & $\GL(p, \mathbb{R})$ & & & & & & & & \\
\hline

7 &$\Orth(p,p,\R)$& $\U(p,p)$ & $\Sp(p,p)$ & $\Sp(p,q)\times \Sp(p,q)$& & & & & \\
\hline

6 & $ \Orth(2p,\CC)$ & $\Orth^*(2p)$ & $\GL(p, \HH)$  & $\Sp(p,q)$ & & & & & \\
\hline

5 &$\Orth^*(4p)$  &$\Orth^*(2p)\times \Orth^*(2p)$  &$\Orth^*(2p)$&  $\U(p,q)$& & & & & \\
\hline

4 & $\GL(p, \mathbb{H})$ &  $\Orth^*(2p)$ & $\Orth(p, \mathbb{C})$ & $\Orth(p,q, \mathbb{R})$ & $\GL(p, \mathbb{R})$ & & & & \\
\hline

3 &$\Sp(p,p)$ & $\U(p,p)$ &$\Orth(p,p,\R)$ & $\Orth(p,q,\R)\times \Orth(p,q,\R)$  & $\Orth(p,p)$& $\U(p,p)$& $\Sp(p,p)$ & $\Sp(p,q)\times \Sp(p,q)$& \\
\hline

2 &$\Sp(2,\mathbb C)$ & $\Sp(2p,\R)$ & $\GL(2p,\R)$ & $\Orth(2p,2q,\R)$&$\Orth(2p,\CC)$ &$\Orth^*(2p)$ & $\GL(p, \HH)$ &  $\Sp(p,q)$&  \\
\hline

1 &$\Sp(2p,\mathbb R)$  &$\Sp(2p,\mathbb{R})\times \Sp(2p,\mathbb{R})$ & $\Sp(4p,\R)$&$\U(2p,2q)$&$\Orth^*(4p)$  &$\Orth^*(2p)\times\Orth^*(2p)$ & $\Orth^*(2p)$ &$\U(p,q)$ & \\
\hline

0 &  &$\Sp(2p,\mathbb R)$ &$\Sp(2p,\mathbb C)$ & $\Sp(p,q)$ &  $\GL(p, \mathbb{H})$ &  $\Orth^*(2p)$ & $\Orth(p, \mathbb{C})$& $\Orth(p,q,\mathbb{R})$ & $\GL(p, \mathbb{R})$ \\ 
\hline
\hline

 & 0 & 1 &2 & 3 & 4 & 5 & 6 & 7 & 8 \\
\hline
\end{tabular}
}
\label{tab:F-one}
\end{table}
In the following sections we will write the calculation in the order that was described in item 2.3. We write $J_k$ for $J_{z_k}$ for shortness.


\subsection{Modules over $\mathbb R$}



\subsubsection{$\dim_{\mathbb R}(E_{r,s}^*)=1$: cases $\n_{7,0}(U)$, $\n_{3,4}(U)$, $\n_{8,0}(U)$, $\n_{4,4}(U)$, $\n_{0,8}(U)$.}


\begin{center}
\centering
\scalebox{0.7}{
\begin{tabular}{|c|c|c|c|c|c|c|c|c|c|}
\hline
$V^{7,0}_{\min}$ & \multicolumn{8}{c|}{} & $\dim=8$ 
\\ 
\hline
$E_{P_1}^{\pm}$ & \multicolumn{4}{c|}{+}                         & \multicolumn{4}{c|}{-}                         & $\dim=4$  
\\ 
\hline
$E_{P_2}^{\pm}$ & \multicolumn{2}{c|}{+} & \multicolumn{2}{c|}{-} & \multicolumn{2}{c|}{+} & \multicolumn{2}{c|}{-} & $\dim=2$ 
\\ 
\hline
$E_{P_3}^{\pm}$ &   +   $E^*_{7,0}$     &     -      &   +        &      -     &    +       &     -      &    +       &      -     & $\dim=1$  
\\ 
\hline
Basis for $E^*_{7,0}$& $v$ & $\ldots$ & $\ldots$ & $\ldots$ & $\ldots$ & $\ldots$ & $\ldots$ & $\ldots$ & $\begin{array}{ll}&P_1=J_1J_2J_4J_5 \\ & P_2=J_1J_2J_6J_7\\ & P_3=J_1J_3J_4J_6\\ &P_4=J_1J_2J_3\\ &\PP=J_1J_2J_3\end{array}$
\\
\hline
$G_I$ &  & $J_3$ & $J_7$ & $J_6$ & $J_5$ & $J_4$ & $J_2$ & $J_1$ &  
\\ 
\hline
\end{tabular}
}
\end{center}

\vskip0.5cm

There are four types of minimal admissible modules: 
$$
V_{min;+}^{7,0;+},\quad 
V_{min;+}^{7,0;-},\quad
V_{min;-}^{7,0;+},\quad
V_{min;-}^{7,0;-}.
$$
According to the classification Theorem~\ref{main theorem 1} we can reduce the consideration to the non-isotypic  $(p,q)$-module
\begin{equation}\label{eq:non-isotypic07}
U=\big(\oplus^{p}V_{min;+}^{7,0;+}\big)\oplus \big(\oplus^{q}V_{min;+}^{7,0;-}\big).
\end{equation} 
We consider non-isotypic $(p,q)$-module~\eqref{eq:non-isotypic07} and 
a vector space $E=\big(\oplus^p (E_{7,0}^*)^+\big)\oplus \big(\oplus^q (E_{7,0}^*)^-\big)$, with $(E_{7,0}^*)^+\subset V_{min;+}^{7,0;+}$ and $(E_{7,0}^*)^-\subset V_{min;+}^{7,0;-}$. Note that $\PP$ acts as $\Id$ on $E$ and $\eta=\Id_{p,q}$.
The unique condition that needs to be checked is
$$
A^T\eta \PP A=\eta \PP\quad\Longleftrightarrow\quad
A^T\Id_{p,q} A=\Id_{p,q}.
$$
We conclude $\Aut^0(\n_{7,0}(U))=\Orth(p,q;\mathbb{R})$.

Structure of the minimal admissible modules and the involutions for $\n_{3,4}(U)$ are similar to $\n_{7,0}(U)$ and we conclude that $\Aut^0(\n_{3,4}(U))\cong \Orth(p,q,\mathbb{R})$ for a non-isotypic $(p,q)$-module $U$. 

\vskip0.5cm

\begin{center}
\centering
\scalebox{0.65}{
\begin{tabular}{|c|c|c|c|c|c|c|c|c|c|c|c|c|c|c|c|c|c|}
\hline
$V^{8,0}_{min}$ & \multicolumn{16}{c|}{}                                                                                                                                                                        & $\dim=16$ 
\\ 
\hline
$E_{P_1}^{\pm}$ & \multicolumn{8}{c|}{+}                                                                         & \multicolumn{8}{c|}{-}                                                                         & $\dim=8$ 
\\ 
\hline
$E_{P_2}^{\pm}$ & \multicolumn{4}{c|}{+}                         & \multicolumn{4}{c|}{-}                         & \multicolumn{4}{c|}{+}                         & \multicolumn{4}{c|}{-}                         & $\dim=4$ 
\\ 
\hline
$E_{P_3}^{\pm}$ & \multicolumn{2}{c|}{+} & \multicolumn{2}{c|}{-} & \multicolumn{2}{c|}{+} & \multicolumn{2}{c|}{-} & \multicolumn{2}{c|}{+} & \multicolumn{2}{c|}{-} & \multicolumn{2}{c|}{+} & \multicolumn{2}{c|}{-} & $\dim=2$ 
\\ 
\hline
$E_{P_4}^{\pm}$ &     +  $E_{8,0}^*$    &      -     &     +      &          - &      +     &      -     &     +      &  -         &       +    &          - &       +    &     -      &      +     &      -     &     +      &          - & $\dim=1$ 
\\ 
\hline
Basis for $E_{8,0}^*$& $v$ & $\ldots$ & $\ldots$ & $\ldots$ & $\ldots$ & $\ldots$ & $\ldots$ & $\ldots$ & $\ldots$ & $\ldots$ & $\ldots$ & $\ldots$ & $\ldots$ & $\ldots$ & $\ldots$ & $\ldots$ &  $\begin{array}{ll}&P_1=J_1J_2J_3J_4\\ &P_2=J_1J_2J_5J_6\\ &P_3=J_1J_2J_7J_8\\ & P_4=J_1J_3J_5J_7\end{array}$
\\ 
\hline
$G_I$ &  & $J_1J_2$ & $J_8$ & $J_7$ & $J_6$ & $J_5$ & $J_1J_3$ & $J_1J_4$ & $J_4$ & $J_3$ & $J_1J_5$ & $J_1J_6$ & $J_1J_7$ & $J_1J_8$ & $J_2$ & $J_1$ &  
\\ 
\hline
\end{tabular}
}
\end{center}

\vskip0.5cm

The tables for $(r,s)\in\{(0,8),(4,4)\}$ are the same. 
There are no operators $\PP$ leaving invariant the space $E=\oplus^pE_{r,s}^*$, $(r,s)\in\{(8,0),(0,8),(4,4)\}$ that means that there are no restrictions on group of automorphisms acting on an admissible module. We conclude that $\Aut^0(U)=\GL(p,\mathbb R)$ for $U=\oplus^p V^{r,s;+}_{min}$ and for $(r,s)\in\{(8,0),(0,8),(4,4)\}$.


\subsubsection{$\dim_{\mathbb R}(E_{r,s}^*)=2$: cases $\n_{1,0}(U)$, $\n_{0,1}(U)$; $\n_{0,7}(U)$, $\n_{4,3}(U)$}


\begin{center}
\scalebox{0.7}{
\begin{tabular}{|c|c|c|}
\hline
$\n_{1,0}$ & & dim=2 \\
\hline
Basis & $x_1=v$ & \\
& $x_2=J_1v$ & \\
\hline
\end{tabular}
\qquad
\begin{tabular}{|c|c|c|}
\hline
$\n_{0,1}$ & & dim=2 \\
\hline
Basis & $x_1=v$ & \\
& $\textcolor{red}{x_2=J_1v}$ & \\
\hline
\end{tabular}
}
\end{center}

\vskip0.5 cm

Let $U=\oplus^pV^{1,0;+}_{min}$. In this case $A\in\Aut^0(\n_{1,0}(U))$ has to fulfil the relation 
$A^TJ_{1}A=J_{1}$ for 
$J_{1}=\diag_p\begin{pmatrix}0&-1\\1&0\end{pmatrix}$. We conclude $\Aut^0(\n_{1,0}(U))\cong\Sp(2p;\mathbb R)$

Let $U=\oplus^pV^{0,1;N}_{min}$. Then $A^T\eta J_{1}A=\eta J_{1}$, where $\eta J_{1}=\diag_p\begin{pmatrix}0&1\\-1&0\end{pmatrix}$. 
It follows that $\Aut^0(\n_{0,1}(U))=\Sp(2p;\mathbb R)$ as in the previous case. 

\vskip0.5cm

\begin{center}
\centering
\scalebox{0.7}{
\begin{tabular}{|c|c|c|c|c|c|c|c|c|c|}
\hline
$V^{0,7}_{min}$ & \multicolumn{8}{c|}{}                                                                         & $\dim=16$ 
\\ 
\hline
$E_{P_1}^{\pm}$ & \multicolumn{4}{c|}{+}                         & \multicolumn{4}{c|}{-}                         & $\dim=8$  \\ \hline
$E_{P_2}^{\pm}$ & \multicolumn{2}{c|}{+} & \multicolumn{2}{c|}{-} & \multicolumn{2}{c|}{+} & \multicolumn{2}{c|}{-} & $\dim=4$ 
\\ 
\hline
$E_{P_3}^{\pm}$ &   +    $E_{0,7}^*$    &     -      &   +        &      -     &    +       &     -      &    +       &      -     & $\dim=2$  
\\ 
\hline
Basis for $E_{0,7}^*$ & $x_1=v$ & $\ldots$& $\ldots$& $\ldots$& $\ldots$& $\ldots$& $\ldots$& $\ldots$& $\begin{array}{ll} & P_1=J_1J_2J_3J_4\\& P_2=J_1J_2J_5J_6\end{array} $
\\
& \textcolor{red}{$x_2=J_1J_2J_7v$}& $\ldots$ & $\ldots$ & $\ldots$ & $\ldots$ & $\ldots$ & $\ldots$ &$\ldots$ & $\begin{array}{ll} &P_3=J_1J_3J_5J_7\\&\PP=J_1J_2J_7 \end{array}$
\\
\hline
$G_I$& & $J_7$ & $J_6$ & $J_5$ & $J_4$ & $J_3$ & $J_2$ & $J_1$ & \\
\hline
\end{tabular}
}
\end{center}

\vskip0.5cm

We need to check the condition
\begin{equation}\label{eq:07}
A^T\eta\PP A=\eta\PP\quad\Longleftrightarrow\quad A^T\diag_p\begin{pmatrix}0&1\\1&0\end{pmatrix}A=\diag_p\begin{pmatrix}0&1\\1&0\end{pmatrix}.
\end{equation}
In the basis $y_1=x_1+x_2$ and $y_2=x_1-x_2$ for $E_{0,7}^*\subset V^{0,7;N}_{min}$ condition~\eqref{eq:07} became
$$
A^T\diag_p\begin{pmatrix}1&0\\0&-1\end{pmatrix}A=\diag_p\begin{pmatrix}1&0\\0&-1\end{pmatrix}.
$$
We conclude that $\Aut^0(\n_{0,7}(U))\cong \Orth(p,p,\mathbb{R})$ for $U=\oplus^pV^{0,7;N}_{min}$.
\\

For the case $\n_{4,3}(U)$ the system of involutions and operators are similar. We conclude that $\Aut^0(\n_{4,3}(U))=\Orth(p,p,\mathbb R)$.


\subsection{Modules over $\mathbb C$}

In this section we first consider the cases when the operators $\eta \PP_k$ are complex linear, or in other words they commute with the almost complex structure $\ii$. In this cases the group of automorphisms is related to unitary  transformations. The last part of the cases is related to the situations when the operators  $\eta \PP_k$ are not complex linear.


\subsubsection{$\dim_{\mathbb C}(E_{r,s}^*)=1$: cases $\n_{7,1}(U)$, $\n_{3,5}(U)$; $\n_{6,0}(U)$, $\n_{2,4}(U)$}


\begin{center}
\centering
\scalebox{0.7}{
\begin{tabular}{|c|c|c|c|c|c|c|c|c|c|}
\hline
$V^{7,1}_{min}$ & \multicolumn{8}{c|}{}                                                                         & $\dim=16$ 
\\ 
\hline
$E_{P_1}^{\pm}$ & \multicolumn{4}{c|}{+}                         & \multicolumn{4}{c|}{-}                         & $\dim=8$  
\\ 
\hline
$E_{P_2}^{\pm}$ & \multicolumn{2}{c|}{+} & \multicolumn{2}{c|}{-} & \multicolumn{2}{c|}{+} & \multicolumn{2}{c|}{-} & $\dim=4$ 
\\ 
\hline
$E_{P_3}^{\pm}$ &   +    $E_{7,1}^*$    &     -      &   +        &      -     &    +       &     -      &    +       &      -     & $\dim=2$  
\\ 
\hline
Basis for $E_{7,1}^*$& $x_1=v$ & $\ldots$ & $\ldots$  & $\ldots$  & $\ldots$ & $\ldots$ & $\ldots$  & $\ldots$ & $\begin{array}{ll}&P_1=J_1J_2J_4J_5\\&P_2=J_1J_2J_6J_7\end{array}$
\\
 & \textcolor{red}{$x_2=\ii v$} & $\ldots$ & $\ldots$ & $\ldots$ & $\ldots$ &$\ldots$ &$\ldots$ &$\ldots$ & $\begin{array}{ll}&P_3=J_1J_3J_5J_7\\&P_4=J_1J_2J_3\\ &\ii=J_{1}J_{2}J_{3}J_{8}\\ &\PP=J_1J_2J_3\end{array}$
 \\
\hline
$G_I$& & $J_3$ & $J_6$ & $J_7$ & $J_4$ & $J_5$ &  $J_2$ & $J_1$  & \\
\hline
\end{tabular}
}
\end{center}
\vskip0.5cm

We have $E_{7,1}^*=E_{\PP}^{+1} \oplus E_{\PP}^{-1}$, with 
$
E_{\PP}^{+1} = \text{span}\lbrace v \rbrace$ and $E_{\PP}^{-1} = \text{span} \lbrace \ii v \rbrace$.
We let
$
U= (\stackrel{p}\oplus V_{min}^{7,1;+})\oplus (\stackrel{q}\oplus V_{min}^{7,1;-})$. Since $\eta\PP$ is complex linear, we need to check
\begin{equation}\label{eq:J_8}
A^T\eta \PP A=\eta \PP\quad\Longleftrightarrow\quad
\bar A^T_{\CC}\Id_{p,q}A_{\CC}=\Id_{p,q}.
\end{equation}
Here we used the embedding~\eqref{eq:phoCC} and denoted by $A_{\CC}$ the matrix with complex entries such that $\rho_{\CC}(A_{\CC})=A$.
It shows that $A\in \U(p,q)$ and $\Aut^0(\n_{7,1}(U))\cong \U(p,q)$.
\\

The table and calculations for $\n_{3,5}(U)$ are analogous to $\n_{7,1}(U)$ and we conclude $\Aut^0(\n_{3,5}(U))=\U(p,q)$ for $U= (\stackrel{p}\oplus V_{min}^{3,5;+})\bigoplus (\stackrel{q}\oplus V_{min}^{3,5;-})$.
\\

We consider now cases when the operators $\eta\PP$ are not complex linear.

\vskip0.5cm
\begin{center}
\centering
\scalebox{0.7}{
\begin{tabular}{|c|c|c|c|c|c|}
\hline
$V^{6,0}_{min}$ & \multicolumn{4}{c|}{}                         & $\dim=8$ 
\\ 
\hline
$E_{P_1}^{\pm}$ & \multicolumn{2}{c|}{+} & \multicolumn{2}{c|}{-} & $\dim=4$ 
\\ 
\hline
$E_{P_2}^{\pm}$  &      + $E^*_{6,0}$    &     -      &    +       &     -      & $\dim=2$ 
\\ 
\hline
Basis for $E^*_{6,0}$& $x_1=v$ &  $\ldots$ &  $\ldots$ & $\ldots$ & $\begin{array}{ll}&P_1=J_1J_2J_3J_4\\&P_2=J_1J_2J_5J_6\\& P_3=J_1J_3J_5\end{array}$
\\
& $x_2=\ii v$ & $\ldots$ &  $\ldots$ & $\ldots$ & $\begin{array}{ll}& \ii= J_1J_2\\& \PP=J_1J_3J_5\end{array}$\\
\hline
$G_I$ & & $ J_5$ & $J_3$ & $J_1$ & \\ 
\hline
\end{tabular}
}
\end{center}

\vskip0.5cm

We have $E_{6,0}^*=E_{\PP}^+ \oplus E_{\PP}^-$  with
$E_{\PP}^+ = \text{span}\lbrace v \rbrace$,  
$E_{\PP}^- = \text{span} \lbrace \ii v \rbrace$ and $A\in\GL(1,\CC)$. We also have that $\PP\ii=-\ii\PP$ with $\PP=\begin{pmatrix}1&0\\0&-1\end{pmatrix}$. We obtain
\begin{equation}\label{eq:J135}
A^T\PP A=\PP\quad\Longleftrightarrow\quad \PP\rho_{\CC}(\bar A_{\CC}^T) \PP \rho_{\CC}(A_{\CC})=\Id.
\end{equation} 
By making use of~\eqref{eq:conjugation1}, we conclude that
$
A^T_{\CC}A_{\CC}=\Id$. 
For general admissible module $U= (\stackrel{p}\oplus V_{min}^{6,0;+})$ we obtain
$\Aut^0(\n_{6,0}(U))=\Orth(p,\mathbb C)$.
\\

Calculations and the table for $\n_{2,4}(U)$ are similar to the case $\n_{6,0}(U)$. Thus $\Aut^0(\n_{2,4}(U))=\Orth(p,\mathbb C)$. 


\subsubsection{$\dim_{\mathbb C}(E_{r,s}^*)=2$: cases $\n_{1,7}(U)$, $\n_{5,3}(U)$; $\n_{2,0}(U)$, $\n_{0,2}(U)$; $\n_{0,6}(U)$, $\n_{4,2}(U)$}


\begin{center}
\centering
\scalebox{0.7}{
\begin{tabular}{|c|c|c|c|c|c|c|c|c|c|}
\hline
$V^{1,7}_{min}$ & \multicolumn{8}{c|}{}                                                                         & $\dim=32$ 
\\ 
\hline
$E_{P_1}^{\pm}$ & \multicolumn{4}{c|}{+}                         & \multicolumn{4}{c|}{-}                         & $\dim=16$ 
\\ 
\hline
$E_{P_2}^{\pm}$ & \multicolumn{2}{c|}{+} & \multicolumn{2}{c|}{-} & \multicolumn{2}{c|}{+} & \multicolumn{2}{c|}{-} & $\dim=8$ 
\\ 
\hline
$E_{P_3}^{\pm}$ &       + $E^*_{1,7}$    &     -      &          + &      -     &        +   &     -      &          + &    -       & $\dim=4$ 
\\ 
\hline
Basis for $E^*_{1,7}$& $x_1=v$ & \ldots & \ldots & \ldots & \ldots & \ldots & \ldots & \ldots & $\begin{array}{ll}&P_1=J_{2}J_{3}J_{4}J_{5}\end{array}$
\\
& \textcolor{red}{$x_2=\ii v$} & $\ldots$ & $\ldots$ & $\ldots$ &$\ldots$ & $\ldots$ & $\ldots$ & $\ldots$ & $\begin{array}{ll}& P_2=J_{2}J_{3}J_{6}J_{7}\\&P_3=J_{2}J_{4}J_{6}J_{8}\end{array}$
\\
 &  $x_3=J_{1}v$ & $\ldots$ & $\ldots$ & $\ldots$ &$\ldots$ & $\ldots$ & $\ldots$ & $\ldots$  &$\ii=J_{1}J_{6}J_{7}J_{8}$  
 \\
 & \textcolor{red}{$x_4=\ii J_1v$}  & $\ldots$ & $\ldots$ & $\ldots$ &$\ldots$ & $\ldots$ & $\ldots$ & $\ldots$ & $\PP=J_1$
 \\ 
\hline
$G_I$& & $J_{8}$ & $J_{7}$ & $J_{6}$ & $J_{5}$ & $J_{4}$ & $J_{3}$ &  $J_{2}$   & \\
\hline
\end{tabular}
}
\end{center}

\vskip0.5cm
We consider the minimal admissible module first. We have $A\in\GL(2,\CC)$ and $\eta\PP\ii=\ii\eta\PP$. Thus the complex linear map $\eta\PP=\begin{pmatrix}0&-1\\1&0\end{pmatrix}$  is skew-Hermitian. As it was noticed, from a qualitative point of view, consideration of skew-Hermitian forms (up to isomorphism) provides no new classical groups, since multiplication by $\mathbf i$ renders a skew-Hermitian form Hermitian, and vice versa. The form $\mathbf i \eta\PP$ is Hermitian of the signature $(1,1)$ and the condition
$
\bar A^T_{\CC}\mathbf i \eta\PP A_{\CC}=\mathbf i \eta\PP
$
leads to $\Aut^0(\n_{1,7}(V_{min}^{1,7;N}))\cong \U(1,1)$.  It shows that $\Aut^0(\n_{1,7}(U))\cong \U(p,p)$ for a general admissible module. 
\\

The calculations and the table for $\n_{5,3}(U)$ are similar to $\n_{1,7}(U)$ and we conclude that $\Aut^0(\n_{5,3}(U))\cong \U(p,p)$.

\vskip0.5cm

\begin{center}
\scalebox{0.7}{
\begin{tabular}{|c|c|c|}
\hline
$\n_{2,0}$ & & dim=4 \\
\hline
Basis & $x_1=v$ & \\
& $x_2=\ii x_1$ & $\ii=J_1J_2$\\
& $x_3=J_1v$ & $\PP=J_1$\\
& $x_4=\ii x_3$ & \\
\hline
\end{tabular}
\qquad
\begin{tabular}{|c|c|c|}
\hline
$\n_{0,2}$ & & $\dim=4$ \\
\hline
Basis & $x_1=v$ & \\
& $x_2=\ii v$ & $\ii=J_1J_2$\\
& $\textcolor{red}{x_3=J_1v}$ & $\PP=J_1$\\
& $\textcolor{red}{x_4=\ii v}$ & \\
\hline
\end{tabular}
}
\end{center}

\vskip0.5cm

We make calculations for $U=V_{min}^{2,0;+}$. We have  $A\in\GL(2,\CC)$, $\PP\ii=-\ii\PP$, and  
$$
\PP=
\begin{pmatrix}
0&0&-1&0
\\
0&0&0&1
\\
1&0&0&0
\\
0&-1&0&0
\end{pmatrix}=\diag_2\begin{pmatrix}1&0\\0&-1\end{pmatrix}\cdot \begin{pmatrix}0&-\Id_2\\\Id_2&0\end{pmatrix}
$$
The condition $
A^T \PP A=\PP$ is equivalent to
$$
\diag_2\begin{pmatrix}1&0\\0&-1\end{pmatrix}\rho_{\CC}(\bar A_{\CC}^T)\diag_2\begin{pmatrix}1&0\\0&-1\end{pmatrix}\begin{pmatrix}0&-\Id_2\\\Id_2&0\end{pmatrix}\rho_{\CC}(A_{\CC})=\begin{pmatrix}0&-\Id_2\\\Id_2&0\end{pmatrix}.
$$
Observation~\eqref{eq:conjugation1} implies that 
$$
A_{\CC}^T\begin{pmatrix}0&-1\\1&0\end{pmatrix}A_{\CC}=\begin{pmatrix}0&-1\\1&0\end{pmatrix}\quad\Longrightarrow\quad
\Aut^0(\n_{2,0}(V^{2,0;+}_{\min}))\cong\Sp(2;\CC).
$$
For the general module $U=\oplus^p V^{2,0;+}_{min}$, we obtain $\Aut^0(\n_{2,0}(U))\cong\Sp(2p;\CC)$.
\\

Let now $U=\oplus^pV^{0,2;N}_{min}$. For the neutral metric $\eta$ we obtain
$$\eta \PP=-
\begin{pmatrix}
0&0&-1&0
\\
0&0&0&1
\\
1&0&0&0
\\
0&-1&0&0
\end{pmatrix}.
$$
Thus by calculations for $
A^T \eta\PP A=\eta\PP$ as above we get
$\Aut^0(\n_{0,2}(U))=\Sp(2p,\CC)$.

\vskip0.5cm

\begin{center}
\centering
\scalebox{0.7}{
\begin{tabular}{|c|c|c|c|c|c|}
\hline
$V^{0,6}_{\min}$ & \multicolumn{4}{c|}{}                         & $\dim=16$ 
\\ 
\hline
$E_{P_1}^{\pm}$ & \multicolumn{2}{c|}{+} & \multicolumn{2}{c|}{-} & $\dim=8$ 
\\ 
\hline
$E_{P_2}^{\pm}$  &      +  $E^*_{0,6}$   &     -      &    +       &     -      & $\dim=4$ 
\\ 
\hline
Basis for $E^*_{0,6}$& $x_1=v$ & $\ldots$ & $\ldots$ & $\ldots$& $P_1=J_1J_2J_3J_4$
\\
 & $x_2=\ii v$ & $\ldots$ & $\ldots$ & $\ldots$ &  $P_2=J_1J_2J_5J_6$
 \\
& \textcolor{red}{$x_3=J_1J_3J_5v$} & $\ldots$ & $\ldots$  & $\ldots$ & $\ii=J_1J_2$
\\ 
& \textcolor{red}{$x_4=\ii x_3$} & $\ldots$ & $\ldots$  & $\ldots$ & $\PP=J_1J_3J_5$
\\
\hline
$G_I$& & $J_5$ & $J_3$ & $J_1$ & 
\\
\hline
\end{tabular}
}
\end{center}

\vskip0.5cm
We start from $U=V^{0,6}_{\min}$.
Note that $A\in\GL(2;\CC)$, $\eta\PP\ii=-\ii\eta\PP$ and 
$$
\eta\PP=
\begin{pmatrix}
0&0&-1&0
\\
0&0&0&1
\\
-1&0&0&0
\\
0&1&0&0
\end{pmatrix}=\diag_2\begin{pmatrix}1&0\\0&-1\end{pmatrix}\cdot \begin{pmatrix}0&\Id_2\\\Id_2&0\end{pmatrix}.
$$
Thus 
$$
A^T\eta\PP A=\eta\PP\quad\Longleftrightarrow\quad
A^T_{\CC}
\begin{pmatrix}0&1\\1&0\end{pmatrix}
A_{\CC}=
\begin{pmatrix}0&1\\1&0\end{pmatrix}.
$$
The matrix $\begin{pmatrix}0&1\\1&0\end{pmatrix}$ is symmetric of signature $(1,1)$. Thus $\Aut^0(\n_{0,6}(V_{min}^{0,6;N}))\cong\Orth(1,1;\CC)\cong\Orth(2;\CC)$.
We obtain $\Aut^0(\n_{0,6}(U))\cong\Orth(2p;\CC)$ for $U=\oplus^p V^{0,6;N}_{min}$, w.
\\

The calculations and the table for $\n_{4,2}(U)$ are similar to $\n_{0,6}(U)$ and we conclude that $\Aut^0(\n_{4,2}(U))=\Orth(2p,\CC)$.


\subsection{Modules over $\mathbb H$}



\subsubsection{$\dim_{\mathbb H}(E_{r,s}^*)=1$: cases $\n_{4,0}(U)$, $\n_{0,4}(U)$, $\n_{6,2}(U)$, $\n_{2,6}(U)$, $\n_{6,1}(U)$, $\n_{1,6}(U)$, $\n_{5,2}(U)$, $\n_{2,5}(U)$, $\n_{5,0}(U)$, $\n_{1,4}(U)$, $\n_{3,0}(U)$, $\n_{3,6}(U)$, $\n_{7,2}(U)$.}


\begin{center}
\scalebox{0.7}
{
\centering
\begin{tabular}{|c|c|c|c|}
\hline
$V^{4,0}_{min}$ & \multicolumn{2}{c|}{} & $\dim=8$ \\ \hline
$E_{P_1}^{\pm}$ &     +   $E^*_{4,0}$   &     -      & $\dim=4$ \\
\hline
Basis for $E^*_{4,0}$& $x_1=v$ &   $\ldots$ & $P_1=J_1J_2J_3J_4$ \\
 & $x_2=\ii v$ &  $\ldots$  & $\ii=J_1J_2$\\  
& $x_3=\jj v$ & $\ldots$ & $\jj=J_2J_3$\\
& $x_4=\kk v$ & $\ldots$ & $\kk=J_3J_1$\\
\hline
$G_I$& & $J_1$ & \\ 
 \hline
\end{tabular}
}
\end{center}
\vskip0.5cm

The table for $\n_{0,4}(V^{0,4}_{min})$ is analogous, with
$\ii=J_1J_2 $, $\jj=J_2J_3$, $\kk=J_1J_3$.

\vskip05cm
\begin{center}
\centering
\scalebox{0.7}{
\begin{tabular}{|c|c|c|c|c|c|c|c|c|c|}
\hline
$V^{6,2}_{min}$ & \multicolumn{8}{c|}{}& $\dim=32$ 
\\ 
\hline
$E_{P_1}^{\pm}$ & \multicolumn{4}{c|}{+}                         & \multicolumn{4}{c|}{-}                         & $\dim=16$ 
\\ 
\hline
$E_{P_2}^{\pm}$ & \multicolumn{2}{c|}{+} & \multicolumn{2}{c|}{-} & \multicolumn{2}{c|}{+} & \multicolumn{2}{c|}{-} & $\dim=8$ 
\\ 
\hline
$E_{P_3}^{\pm}$ &       +  $E^*_{6,2}$  &     -      &          + &      -     &        +   &     -      &          + &    -       & $\dim=4$ 
\\ 
\hline
Basis for $E^*_{6,2}$& $x_1=v$ & $\ldots$ & $\ldots$ & $\ldots$ & $\ldots$ & $\ldots$ & $\ldots$ & $\ldots$ & $\begin{array}{ll}&P_1=J_1J_2J_3J_4\\&P_2=J_1J_2J_5J_6\\&P_3=J_1J_2J_7J_8\end{array}$
\\
 & $x_2=\ii v$ & $\ldots$ & $\ldots$ & $\ldots$ & $\ldots$ & $\ldots$ & $\ldots$ & $\ldots$ & $\ii=J_1J_2$
 \\
& \textcolor{red}{$x_3=\jj v$} & $\ldots$ & $\ldots$ & $\ldots$ & $\ldots$ &$\ldots$ & $\ldots$ & $\ldots$ & $\jj=J_1J_3J_5J_7$
\\
& \textcolor{red}{$x_4=\kk v$} & $\ldots$ & $\ldots$ & $\ldots$ & $\ldots$ & $\ldots$ & $\ldots$ & $\ldots$ & $\kk=J_2J_3J_5J_7$
\\ 
\hline
$G_I$& & $J_7$ & $J_5$ & $J_1J_3$ & $J_3$ & $J_1J_5$ &  $J_1J_7$ & $J_1$  & \\
\hline
\end{tabular}
}
\end{center}

\vskip0.5cm

The table for $\n_{2,6}$ is similar.
In all 4 cases there are no conditions except of requirement to commute with the quaternion structure. We conclude that 
$
\Aut^0(\n_{4,0}(U))=\Aut^0(\n_{0,4}(U))=\Aut^0(\n_{6,2}(U))=\Aut^0(\n_{2,6}(U))=\GL(p,\HH)$.

\vskip0.5cm

\begin{center}
\centering
\scalebox{0.7}{
\begin{tabular}{|c|c|c|c|c|c|}
\hline
$V^{1,6}_{min}$ & \multicolumn{4}{c|}{}                         & $\dim=16$ 
\\ 
\hline
$E_{P_1}^{\pm}$ & \multicolumn{2}{c|}{+} & \multicolumn{2}{c|}{-} & $\dim=8$ 
\\ 
\hline
$E_{P_2}^{\pm}$  &      +  $E^{*}_{1,6}$   &     -      &    +       &     -      & $\dim=4$ 
\\ 
\hline
Basis for $E^{*}_{1,6}$
& $x_1=v$ &\ldots & \ldots & \ldots&  $\begin{array}{ll}& P_1=J_2J_3J_4J_5\\&P_2=J_2J_3J_6J_7\\&P_3=J_1J_2J_3\end{array}$
\\
& \textcolor{red}{$x_2=\ii v$} & \ldots & \ldots& \ldots & $\ii=J_1J_2J_4J_6$
\\
 & $x_3=\jj v$ & \ldots & \ldots & \ldots & $\jj=J_2J_3$  
 \\
& \textcolor{red}{$x_4=\kk v$} & \ldots & \ldots  & \ldots &$\begin{array}{ll}&\kk=J_1J_3J_4J_6\\&\PP=J_1\end{array}$  \\
\hline
$G_I$& & $J_6$ & $J_4$ & $J_2$ & \\
\hline
\end{tabular}
}
\end{center}

\vskip0.5cm 

Observe that $P_3=-\Id$ on $E_{1,6}^*$ according to the agreement that $E_{1,6}^*\subset V_{min;+}^{1,6;N}$ where $\Omega^{1,6}=\Id$ on $V_{min;+}^{1,6;N}$. 
We consider  $U=\oplus^p V^{1,6;N}_{min;+}$. Since 
$
\eta \PP=
\diag_p\mathbf{j}$
and 
$$
A^T\eta \PP A=\eta \PP\quad\Longleftrightarrow\quad \bar A^T_{\HH}\diag_p\mathbf{j} A_{\HH}=\diag_p\mathbf{j},
$$
we conclude $\Aut^0(\n_{1,6})\cong\Orth^*(2p)$.

\vskip0.5cm

\begin{center}
\centering
\scalebox{0.7}{
\begin{tabular}{|c|c|c|c|c|c|}
\hline
$V^{5,2}_{min}$ & \multicolumn{4}{c|}{}                         & $\dim=16$ 
\\ 
\hline
$E_{P_1}^{\pm}$ & \multicolumn{2}{c|}{+} & \multicolumn{2}{c|}{-} & $\dim=8$ 
\\ 
\hline
$E_{P_2}^{\pm}$  &      +  $E^*_{5,2}$   &     -      &    +       &     -      & $\dim=4$ 
\\ 
\hline
Basis for $E^*_{5,2}$& $x_1=v$ & \ldots & \ldots  & \ldots  &  $\begin{array}{ll}&P_1=J_1J_2J_3J_4\\ &P_2=J_1J_2J_6J_7\\&P_3=J_1J_2J_5\end{array}$
\\
& \textcolor{red}{$x_2=\ii v$} & \ldots  & \ldots  & \ldots  &$\ii=J_2J_3J_5J_6$ 
\\
 & $x_3=\jj v$ & \ldots  & \ldots  & \ldots  & $\jj=J_1J_2$ 
 \\
& \textcolor{red}{$x_4=\kk v$} & \ldots  & \ldots  & \ldots  & $\begin{array}{ll}&\kk=J_1J_3J_5J_6\\&\PP=J_5\end{array}$ 
\\
\hline
$G_I$& & $J_7$ & $J_3$ & $J_1$ & 
\\
\hline 
\end{tabular}
}
\end{center}

\vskip0.5cm

Observe that $P_3=-\Id$ according to $E_{5,2}^*\subset V_{min;+}^{5,2}$ where $\Omega^{5,2}=\Id$ on $V_{min;+}^{5,2;N}$. The calculations are similar to the case of $\n_{1,6}(U)$ show that $\Aut^0(\n_{5,2})\cong\Orth^*(2p)$.

\vskip0.5cm

\begin{center}
\centering
\scalebox{0.7}{
\begin{tabular}{|c|c|c|c|c|c|}
\hline
$V^{6,1}_{min}$ & \multicolumn{4}{c|}{}                         & $\dim=16$ 
\\ 
\hline
$E_{P_1}^{\pm}$ & \multicolumn{2}{c|}{+} & \multicolumn{2}{c|}{-} & $\dim=8$ 
\\ 
\hline
$E_{P_2}^{\pm}$  &      +  $E^*_{6,1}$   &     -      &    +       &     -      & $\dim=4$ 
\\ 
\hline
Basis for $E^*_{6,1}$ & $x_1=v$ &  \ldots  & \ldots &  \ldots &  $\begin{array}{ll}&P_1=J_1J_2J_3J_4\\&P_2=J_1J_2J_5J_6\\&P_3=J_1J_3J_5\end{array}$
\\
& $x_2=\ii v$ & \ldots & \ldots & \ldots & $\ii=J_1J_2$
\\ 
 & \textcolor{red}{$x_3=\jj v$} & \ldots  & \ldots & \ldots& $\jj=J_1J_3J_5J_7$
\\
 & \textcolor{red}{$x_4=\kk v$}  & \ldots  & \ldots & \ldots & $\begin{array}{ll}&\kk=J_2J_3J_5J_7\\& \PP=J_7\end{array}$  
\\
\hline
$G_I$& & $J_5$ & $J_3$ & $J_1$ & \\
\hline
\end{tabular}
}
\end{center}


Observe that $E_{6,1}^*=E_{P_3}^+ \oplus E_{P_3}^-$, with 
$
E_{P_3}^+ = \text{span}\lbrace v, \kk v \rbrace$ and  $E_{P_3}^- = \text{span} \lbrace \ii v, \jj v \rbrace$.
We obtain
$
\eta \PP=
\diag_p\mathbf{j}
$ for $U=\oplus^p V^{6,1;N}_{min}$.
Thus, $\Aut^0(\n_{6,1})\cong\Orth^*(2p)$.
\\

The calculations and the table for $\n_{2,5}(U)$ are similar to $\n_{6,1}(U)$ and we conclude $\Aut^0(\n_{2,5}(U))\cong\Orth^*(2p)$.

\vskip0.5cm

\begin{center}
\scalebox{0.7}
{
\begin{tabular}{|c|c|c|c|}
\hline
$V^{5,0}_{min}$ & \multicolumn{2}{c|}{} & $\dim=8$ 
\\ 
\hline
$E_{P_1}$ &     +  $E^*_{5,0}$    &     -      & $\dim=4$ 
\\
\hline
 Basis for $E^*_{5,0}$ & $x_1=v$ &  $\ldots$ & $\begin{array}{ll}&P_1=J_2J_3J_4J_5\\&P_2=J_1J_2J_3\end{array}$
 \\
 & $x_2=\ii v$ &   $\ldots$& $\ii=J_3J_4$
 \\ 
& $x_3=\jj v$ &  $\ldots$ & $\jj=J_3J_2$
\\
& $x_4=\kk v$ & $\ldots$ &  $\begin{array}{ll}&\kk=J_4J_2\\ &\PP=J_1\end{array}$
\\
\hline
$G_I$& & $J_5$ & \\
\hline 
\end{tabular}
}
\end{center}

\vskip0.5cm

Note that $E_{5,0}^*=E_{P_2}^+ \oplus E_{P_2}^-$ with
$
E_{P_2}^+ = \text{span}\lbrace v, \jj v \rbrace$ and 
$E_{P_2}^- = \text{span} \lbrace \ii v, \kk v \rbrace$.
Thus $\PP=\diag_p
\mathbf{j}$ and 
we conclude that $\Aut^0(\n_{5,0}(U))\cong \Orth^*(2p)$ for $U=\oplus^p V^{5,0;+}_{min}$.
\\

For the case $\n_{1,4}(U)$ we use the quaternion structure $
\ii=J_3J_4$,
$\jj=J_3J_2$,
$\kk=J_2J_4$.
The rest of calculations are similar to $\n_{5,0}(U)$ and we obtain $\Aut^0(\n_{1,4}(U))\cong \Orth^*(2p)$.

\vskip0.5cm

\begin{center}
\scalebox{0.7}
{
\begin{tabular}{|c|c|c|}
\hline
$V^{3,0}_{min}$ & & dim=4 
\\
\hline
Basis & $x_1=v$ & $P_1=J_1J_2J_3$ 
\\
& $x_2=\ii v$ & $\ii=J_1J_2$
\\
& $x_3=\jj v$ & $\jj=J_2J_3$
\\
& $x_4=\kk v$ & $\begin{array}{ll}&\kk=J_3J_1\\&\PP=J_1J_2J_3\end{array}$
\\
\hline
\end{tabular}
}
\end{center}

\vskip0.5cm

Observe that $\PP=\Omega^{3,0}=\Id$. We obtain that 
$
A^T\PP A=\PP$ is equivalent to $\bar A^T_{\HH}\Id_{p,q} A_{\HH}=\Id_{p,q}$.
Thus $\Aut^0(\n_{3,0}(U))=\Sp(p,q)$ for $U=\oplus^p \big(V_{\min;+}^{3,0;+}\big)\oplus\big(V_{\min;+}^{3,0;-}\big)$.

\vskip0.5cm

\begin{center}
\centering
\scalebox{0.7}{
\begin{tabular}{|c|c|c|c|c|c|c|c|c|c|}
\hline
$V^{3,6}_{min}$ & \multicolumn{8}{c|}{}                                                                         & $\dim=32$ 
\\ 
\hline
$E_{P_1}^{\pm}$ & \multicolumn{4}{c|}{+}                         & \multicolumn{4}{c|}{-}                         & $\dim=16$ 
\\ 
\hline
$E_{P_2}^{\pm}$ & \multicolumn{2}{c|}{+} & \multicolumn{2}{c|}{-} & \multicolumn{2}{c|}{+} & \multicolumn{2}{c|}{-} & $\dim=8$ 
\\ 
\hline
$E_{P_3}^{\pm}$ &       +  $E^{8}_{3,6}$  &     -      &          + &      -     &        +   &     -      &          + &    -       & $\dim=4$ 
\\ 
\hline
Basis for $E^{8}_{3,6}$& $x_1=v$ & $\ldots$ & $\ldots$& $\ldots$& $\ldots$& $\ldots$ & $\ldots$ & $\ldots$ & $\begin{array}{ll}&P_1=J_1J_2J_8J_9\\&P_2=J_4J_5J_8J_9\end{array}$
\\
& $x_2=\ii v$ & $\ldots$ & $\ldots$ & $\ldots$ & $\ldots$ & $\ldots$ & $\ldots$ & $\ldots$ & $\begin{array}{ll}&P_3=J_6J_7J_8J_9\\&P_4=J_3J_8J_9\end{array}$
\\ 
 & \textcolor{red}{$x_3=\jj v$}& $\ldots$ & $\ldots$ & $\ldots$ &$\ldots$ & $\ldots$ & $\ldots$ & $\ldots$ & $\begin{array}{ll}&\ii=J_8J_9\\&\jj=J_1J_4J_7J_8\end{array}$
 \\
 & \textcolor{red}{$x_4=\kk v$}& $\ldots$ & $\ldots$ & $\ldots$ & $\ldots$ & $\ldots$ & $\ldots$ & $\ldots$ & $\begin{array}{ll}&\kk=-J_1J_4J_7J_9\\&\PP=J_3J_8J_9\end{array}$
 \\
\hline
$G_I$& & $J_7$ & $J_4$ & $J_5$ & $J_1$ & $J_1J_6$ & $J_7J_8$ & $J_9$ & \\
\hline
\end{tabular}
}
\end{center}

\vskip0.5cm

We have $E_{3,6}^*=E_{P_4}^+ \oplus E_{P_4}^-$,  
with
$E_{P_4}^+ = \text{span}\lbrace v, \ii v \rbrace$, 
$E_{P_4}^- = \text{span} \lbrace \jj v, \kk v \rbrace$. Since $\eta \PP=\Id_{p,q}$, we obtain 
$$
A^T\eta \PP A=\eta \PP\quad\Longleftrightarrow\quad \bar A_{\HH}\Id_{p,q} A_{\HH}=\Id_{p,q}.
$$
So $\Aut^0(\n_{3,6}(U))=\Sp(p,q)$ for $U=\oplus^p \big(V_{\min}^{3,6;+}\big)\oplus\big(\oplus^q V_{\min}^{3,6;-}\big)$.
\\

The calculation and the table for $\n_{7,2}(U)$ are similar to the case $\n_{3,6}(U)$ and we conclude that
$\Aut^0(\n_{7,2}(U))=\Sp(p,q)$.


\subsubsection{$\dim_{\mathbb{H}}(E_{r,s}^*)=2$: cases $\n_{0,3}(U)$, $\n_{6,3}(U)$, $\n_{2,7}(U)$, $\n_{0,5}(U)$, $\n_{4,1}(U)$}


\begin{center}
\scalebox{0.7}{
\begin{tabular}{|c|c|c|}
\hline
$\n_{0,3}$ & & $\dim=8$ \\
\hline
Basis & $x_1=v$ & \\
& $x_2=\ii v$ &$\ii=J_2J_1$ \\
& $x_3=\jj v$ & $\jj=J_3J_2$\\
&  $x_4=\kk v$ & $\kk=J_1J_3$\\
&  $\textcolor{red}{x_5=J_1J_2J_3 v}$& \\
& $\textcolor{red}{x_6=\ii x_5}$& \\
& $\textcolor{red}{x_7=\jj x_5}$ & $\PP=J_1J_2J_3$\\
& $\textcolor{red}{x_8=\kk x_5}$ & \\
\hline
\end{tabular}
}
\end{center}

\vskip0.5cm

We make calculations on $V^{0,3}_{min}$ and note that $\eta\PP=
\begin{pmatrix}
0&1
\\
1&0
\end{pmatrix}
$. In the basis 
\begin{equation}\label{eq:basis03}
\begin{array}{lllllll}
&y_1=x_{1}+x_{5},\ &y_2=\ii y_1,\ &y_3=\jj y_1,\ &y_4=\kk y_1,
\\
&y_5=x_{1}-x_{5},\ &y_6=\ii y_5,\ &y_7=\jj y_5,\ &y_8=\kk y_5
 \end{array}
\end{equation}
the operator $\eta\PP$ takes the form $\Id_{1,1}$.
Thus
$$
A^T \eta \PP A=\eta \PP\quad\Longleftrightarrow \bar A^T_{\HH}\Id_{1,1}A_{\HH}=\Id_{1,1}\quad\Longrightarrow A\in\Sp(1,1).
$$ 
We conclude that  
$\Aut^0(\n_{0,3})\cong\Sp(p,p)$ for $U=\oplus^p V_{min}^{0,3;N}$.

\vskip0.5cm 

\begin{center}
\centering
\scalebox{0.7}{
\begin{tabular}{|c|c|c|c|c|c|c|c|c|c|}
\hline
$V^{6,3}_{min}$ & \multicolumn{8}{c|}{}                                                                         & $\dim=64$ \\ \hline
$E^{\pm}_{P_1}$ & \multicolumn{4}{c|}{+}                         & \multicolumn{4}{c|}{-}                         & $\dim=32$ \\ \hline
$E^{\pm}_{P_2}$ & \multicolumn{2}{c|}{+} & \multicolumn{2}{c|}{-} & \multicolumn{2}{c|}{+} & \multicolumn{2}{c|}{-} & $\dim=16$ \\ \hline
$E^{\pm}_{P_3}$ &   +   $E_{6,3}^*$    &     -      &          + &      -     &        +   &     -      &          + &    -       & $\dim=8$ \\ \hline
Basis for $E_{6,3}^*$& $x_1=v$ & $\ldots$ & $\ldots$& $\ldots$ & $\ldots$ & $\ldots$ & $\ldots$& 
$\ldots$ & $P_1=J_1J_2J_3J_4$
\\
& \textcolor{red}{$x_2=\ii v$} & $\ldots$ & $\ldots$ &$\ldots$ & $\ldots$ & $\ldots$ & $\ldots$ & $\ldots$ &$P_2=J_1J_2J_5J_6$
\\ 
& $x_3=\jj v$ & $\ldots$ & $\ldots$ & $\ldots$ & $\ldots$ & $\ldots$ & $\ldots$ & $\ldots$ & $P_3=J_1J_2J_7J_8$
\\ 
& \textcolor{red}{$x_4=\kk v$} & $\ldots$ & $\ldots$ & $\ldots$ & $\ldots$ & $\ldots$ & $\ldots$ & $\ldots$ & $ \ii =J_1J_3J_6J_8$ 
\\ 
 & \textcolor{red}{$x_5=J_2J_1J_9 v$}& $\ldots$ &$\ldots$ & $\ldots$ & $\ldots$ & $\ldots$ & $\ldots$ &$\ldots$ & 
$\jj=J_2J_1$ 
\\
 & $x_6=\ii J_2J_1J_9v$& $\ldots$ & $\ldots$ & $\ldots$ & $\ldots$ & $\ldots$ & $\ldots$ & $\ldots$& $\kk=J_2J_3J_6J_8$
 \\
 &\textcolor{red}{$x_7=\jj J_2J_1J_9v$} & $\ldots$ & $\ldots$ & $\ldots$ &$\ldots$ & $\ldots$ & $\ldots$ & $\ldots$ &  $\PP=J_2J_1J_9$
 \\
 & $x_8=\kk J_2J_1J_9v$& $\ldots$ & $\ldots$ &$\ldots$ & $\ldots$ & $\ldots$& $\ldots$ & $\ldots$ & 
 \\
\hline
$G_I$& & $J_7$ & $J_5$ & $J_1J_3$ & $J_3$ & $J_1J_5$ & $ J_1J_7$ & $J_1$ & \\
\hline
\end{tabular}
}
\end{center}

\vskip0.5cm

We have that 
$\eta\PP=\Id_{1,1}$ in the basis~\eqref{eq:basis03}. It leads to
$ \Aut^0(\n_{6,3}(U))\cong\Sp(p,p)$.
\\

The calculations are similar to $\n_{6,3}(U)$ and we conclude $\Aut^0(\n_{2,7}(U))\cong\Sp(p,p)$.

\vskip0.5cm

\begin{center}
\scalebox{0.7}{
\begin{tabular}{|c|c|c|c|}
\hline
$V^{0,5}_{min}$ & \multicolumn{2}{c|}{} & $\dim=16$ \\ \hline
$E_{P_1}$ &     +      &     -      & $\dim=8$ \\
\hline
Basis & $x_1=v$ & $\ldots$ & $P_1=J_1J_2J_3J_4$
\\
& $x_2=\ii v$ & $\ldots$  & 
\\
& $x_3=\jj v$& $\ldots$ &$\ii=J_1J_2$ 
\\ 
& $x_4=\kk v$  & $\ldots$ & $\jj=J_1J_3$
\\
&  \textcolor{red}{$x_5=J_5v$}  & $\ldots$ & $\kk=J_3J_2$
\\
& \textcolor{red}{$x_6=\ii J_5v$} & $\ldots$  & \\
 & \textcolor{red}{$x_7=\jj J_5v$} & $\ldots$  & $\PP=J_5$\\
& \textcolor{red}{$x_8=\kk J_5v$} &  $\ldots$ & \\
\hline
$G_I$& & $J_1$ & \\
\hline
\end{tabular}
}
\end{center}

\vskip0.5cm

We have 
$
\eta \PP=\begin{pmatrix}0&1\\-1&0\end{pmatrix}
$ on $V^{0,5}_{min}$, and $\eta\PP=\begin{pmatrix}\mathbf{j}&0\\0&{\mathbf j}\end{pmatrix}$ in the basis 
\begin{equation}\label{eq:basis05}
\begin{array}{lllllll}
&y_1=x_{1}+x_{3}-x_5+x_7,\ &y_2=\ii y_1,\ &y_3=\jj y_1,\ &y_4=\kk y_1,
\\
&y_5=x_{2}+x_{4}+x_6-x_8,\ &y_6=\ii y_5,\ &y_7=\jj y_5,\ &y_8=\kk y_5
 \end{array}
\end{equation}
It leads to
$
A^T \eta \PP A=\eta \PP$ that is equivalent to $\bar A^T_{\HH}\diag_p\begin{pmatrix}\mathbf{j}&0\\0&{\mathbf j}\end{pmatrix}A_{\HH}=\diag_p\begin{pmatrix}\mathbf{j}&0\\0&{\mathbf j}\end{pmatrix}$.
Thus we showed that 
$ \Aut^0(\n_{0,5}(U))\cong\Orth^*(4p)$.
\\

In the case $\n_{4,1}(U)$ we use the quaternion structure 
$\ii=J_1J_2$, $\jj=J_4J_2$, $\kk=J_1J_4$.
The rest of calculations are similar to $\n_{0,5}(U)$. Thus $\Aut^0(\n_{4,1}(U))\cong \Orth^*(4p)$.


\subsection{Modules over $\R$ caring a negative involution}



\subsubsection{Cases $\n_{1,1}(U)$, $\n_{3,3}(U)$}

In these cases there are no complex or quaternion structures, but only a negative involution leaving invariant the space $E^*_{r,s}$. We denote it by $\QQ$, and write in the tables. The involution $\QQ$ commutes with involutions of type (1)-(3) and therefore decomposes the space $E^*_{r,s}$ into its eigenspaces: $E^*_{r,s}=N_+\oplus N_-$. The admissible scalar product is degenerate in both $N_{\pm}$, but the decomposition still orthogonal with respect to the admissible product. In these cases the determination of $\Aut^0(\n_{r,s}(U))$ reduces to the calculations on $N_{\pm}$.

\begin{center}
\scalebox{0.7}{
\begin{tabular}{|c|c|c|}
\hline
$\n_{1,1}$ & & $\dim=4$ \\
\hline
Basis & $x_1=v$ & \\
& $\textcolor{red}{x_2=\QQ v}$ & $\QQ=J_1J_2$\\
& $x_3=J_1v$ & \\
& $\textcolor{red}{x_4=\QQ J_1v}$ & $\PP=J_1$\\
\hline
\end{tabular}
}
\end{center}

\vskip0.5cm

We have $V^{1,1;N}_{min}=N_+\oplus N_-$ with the bases
$$
N_+=\spn\{y_1=\frac{x_1+x_2}{2},\ y_2=\frac{x_3+x_4}{2}\},\quad
N_-=\spn\{y_3=\frac{x_1-x_2}{2},\ y_4=\frac{x_3-x_4}{2}\}
$$
Since $A\QQ=\QQ A$ we can decompose $A=A_+\oplus A_-$ such that $A_{\pm}\colon N_{\pm}\to N_{\pm}$. We have $\eta \PP \QQ=\QQ\eta \PP$ and 
$
\eta \PP=\diag_2\begin{pmatrix}
0&-1
\\
1&0\end{pmatrix}
$
in the basis $\{y_k\}_{k=1}^{4}$.
Thus the condition
$
A^{T}\eta \PP A=\eta \PP$ is equivalent to two independent conditions
$$
A^T_{\pm}\begin{pmatrix}
0&-1
\\
1&0
\end{pmatrix}A_{\pm}=\begin{pmatrix}
0&-1
\\
1&0
\end{pmatrix}.
$$
We conclude
$\Aut^0(\n_{1,1}(V^{1,1;N}_{min}))\cong\Sp(2,\R)\times \Sp(2,\R)$.  We obtain $\Aut^0(\n_{1,1}(U)\cong\Sp(2p,\R)\times \Sp(2p,\R)$ for a general admissible module $U=\oplus^pV^{1,1;N}_{min}$.

\vskip0.5cm

\begin{center}
\scalebox{0.7}
{
\begin{tabular}{|c|c|c|c|c|c|}
\hline
$V^{3,3}_{min}$ & \multicolumn{4}{c|}{}                         & $\dim=8$ 
\\ 
\hline
$E_{P_1}^{\pm}$ & \multicolumn{2}{c|}{+} & \multicolumn{2}{c|}{-} & $\dim=4$ 
\\ 
\hline
$E_{P_2}^{\pm}$  &      +  $E^*_{3,3}$   &     -      &    +       &     -      & $\dim=2$ 
\\ 
\hline
Basis for $E^*_{3,3}$ & $x_1=v$ & $\ldots$ & $\ldots$ & $\ldots$ &  $\begin{array}{ll} &P_1=J_1J_2J_5J_6 \\& P_2=J_1J_3J_4J_6\\ &P_3=J_1J_2J_3 \end{array}$
\\
& \textcolor{red}{$x_2=\QQ v$} & $\ldots$ & $\ldots$ & $\ldots$  & $\begin{array}{ll} & \QQ=J_1J_6 \\ & \PP=P_3 \end{array}$
\\
\hline
$G_I$& & $J_3$ & $J_2$ & $J_6$ & 
\\
\hline
\end{tabular}
}
\end{center}

\vskip0.5cm
We have 
$E_{3,3}^*=E_{\PP}^+ \oplus E_{\PP}^-$,  
with
$E_{\PP}^+ = \text{span}\lbrace v \rbrace$, 
$E_{\PP}^- = \text{span} \lbrace \QQ v \rbrace$. Thus
$$
E_{3,3}^*=N_+\oplus N_-,\quad
N_+=\spn\{y_1=\frac{x_1+x_2}{2}\},\quad
N_-=\spn\{y_2=\frac{x_1-x_2}{2}\}.
$$
Since $A\QQ=\QQ A$ we can decompose $A=A_+\oplus A_-$ such that $A_{\pm}\colon N_{\pm}\to N_{\pm}$. We have $\eta \PP \QQ=\QQ\eta \PP$ and 
$
\eta \PP=\Id_2
$
in the basis $\{y_k\}$, $k=1,2$.
Thus the condition
$
A^{T}\eta \PP A=\eta \PP$ is equivalent to two independent conditions
$
A^T_{\pm}A_{\pm}=\Id$.
We conclude
$\Aut^0(\n_{3,3}(V^{3,3;+}_{min}))\cong\Orth(1,\R)\times \Orth(1,\R)$.  We obtain $\Aut^0(\n_{3,3}(U)\cong\Orth(p,q;\R)\times \Orth(p,q;\R)$ for $U=(\oplus^pV^{3,3;+}_{min})\oplus (\oplus^qV^{3,3;-}_{min})$.


\subsection{Modules over $\mathbb{C}$, caring a negative involution}


In this cases we continue consider eigenspaces of the negative involution $\QQ$. The presence of the complex structure can preserve eigenspaces of $\QQ$ or reverse them. It leads to the different results.


\subsubsection{Cases $\n_{2,2}(U)$, $\n_{3,2}(U)$, $\n_{2,3}(U)$, $\n_{1,2}(U)$}


\begin{center}
\scalebox{0.7}
{
\begin{tabular}{|c|c|c|c|}
\hline
$V^{2,2}_{min}$ & \multicolumn{2}{c|}{} & $\dim=8$ 
\\ 
\hline
$E_{P_1}^{\pm}$ &     +   $E^*_{2,2}$   &     -      & $\dim=4$ 
\\
\hline
Basis for $E^*_{2,2}$& $x_1=v$ &   $\ldots$ & 
\\
 & $x_2=\ii v$ &  $\ldots$  & $P_1=J_1J_2J_3J_4$
 \\ 
 & $\textcolor{red}{x_3=J_2J_3v}$ & $\ldots$ & $\ii=J_1J_2$
 \\

& $\textcolor{red}{x_4=\ii J_2J_3v}$ & $\ldots$ & $\QQ=J_2J_3$
\\
\hline
$G_I$& & $J_3$ & \\ 
 \hline
\end{tabular}
}
\end{center}

\vskip0.5cm

We have the decomposition $E_{2,2}^*=N_+\oplus N_{-}$ with the bases:
\begin{equation}\label{eq:basis22}
N_+=\spn\{y_1=\frac{x_1+x_3}{2},\
y_2=\frac{x_2-x_4}{2}\},\
N_-=\spn\{y_3=\frac{x_1-x_3}{2},\
y_4=\frac{x_2+x_4}{2}\}.
\end{equation}

We write $A=A_+\oplus A_-$, where $A_+\in\GL(2,\R)$, $A_+\colon N_+\to N_+$. The map $A_-\colon N_-\to N_-$ can be found from the relation
$
A_-=J_1J_2A_+J_2J_1$.
We conclude that for minimal admissible module $A\in\GL(2;\R)$. In general $\Aut^0(\n_{2,2}(U))=\GL(2p,\R)$ for $U=\oplus^pV^{2,2;N}_{min}$. 

\vskip0.5cm

\begin{center}
\scalebox{0.7}
{
\begin{tabular}{|c|c|c|}
\hline
$V^{1,2}_{min}$ & & $\dim=4$ \\
\hline
Basis & $x_1=v$ & \\
& $x_2=\ii v$ & $\ii=J_2J_3$\\
& $\textcolor{red}{x_3=J_1J_2v}$ & $\QQ=J_1J_2$
\\
& $\textcolor{red}{x_4=\ii J_1J_2v}$ & $\PP=J_1J_2J_3$
\\
\hline
\end{tabular}
}
\end{center}

\vskip0.5cm

In this case there are two minimal admissible modules but they are metrically isotypic and we set $\PP v=v$. We start from a minimal admissible module and write $V^{1,2;N}_{min}=N_+\oplus N_-$. We also write $A=A_+\oplus A_-$, where $A_+\in\GL(2;\R)$ and $A_-=J_2J_3A_+J_3J_2$. We obtain
$
\eta\PP=\begin{pmatrix}
0&\Id_2
\\
\Id_2&0
\end{pmatrix}
$ in the basis~\eqref{eq:basis22}.
The condition
$$
A^T\eta\PP A=\eta\PP\quad \Longleftrightarrow\quad A_+^T\oplus A_-^T\begin{pmatrix}
0&\Id_2
\\
\Id_2&0
\end{pmatrix}A_+\oplus A_-=\begin{pmatrix}
0&\Id_2
\\
\Id_2&0
\end{pmatrix}
$$
is equivalent to  
$$
A^T_+ A_-=\Id_2\quad \Longleftrightarrow\quad A_+^TJ_2J_3A_+J_3J_2=\Id
\quad \Longleftrightarrow\quad 
A_+^T\begin{pmatrix}
0&-1
\\
1&0
\end{pmatrix}A_+=\begin{pmatrix}
0&-1
\\
1&0
\end{pmatrix}.
$$
Thus we conclude that 
$\Aut^0(\n_{1,2}(V^{1,2;N}_{min}))=\Sp(2;\R)$. For a general admissible module $U=\oplus^pV^{1,2;N}_{min}$ we obtain $\Aut^0(\n_{1,2}(U)=\Sp(2p;\R)$. 

\vskip0.5cm

\begin{center}
\scalebox{0.7}{
\begin{tabular}{|c|c|c|c|}
\hline
$V^{3,2}_{min}$ & \multicolumn{2}{c|}{} & $\dim=8$ 
\\ 
\hline
$E_{P_1}$ &     +   $E^*_{3,2}$   &     -      & $\dim=4$ 
\\
\hline
Basis for $E^*_{3,2}$ &$x_1=v$ &  $\ldots$ & $\begin{array}{ll}&P_1=J_1J_2J_4J_5\\&P_2=J_3J_4J_5\end{array}$
\\
 & $x_2=\ii v$  &   $\ldots$ & $\ii=J_4J_5$
\\
& \textcolor{red}{$x_3=\QQ v$} & $\ldots$ & $\QQ=J_1J_4$
\\ 
& \textcolor{red}{$x_4=\ii \QQ v$} & $\ldots$ & $\PP=J_3J_4J_5$
\\
\hline
$G_I$& & $J_1$ & 
\\  
\hline
\end{tabular}
}
\end{center}

\vskip0.5cm

We have 
$E_{3,2}^*=E_{\PP}^+ \oplus E_{\PP}^-$,  
with 
$E_{\PP}^+ = \text{span}\lbrace v, \ii v \rbrace$ and 
$E_{\PP}^- = \text{span} \lbrace \QQ v, \QQ\ii v \rbrace$,
and $\eta\PP=\Id$ in the basis~\eqref{eq:basis22}. As before we decompose $A=A_+\oplus A_-$ on $E^*_{3,2}$ with $A_+\in\GL(2(p+q);\R)$ and $A_-=-\ii A_+\ii$. The condition
$$
A^T_+\Id_{2p,2q} A_+=\Id_{2p,2q}\quad\text{on}\quad
U=\big(\oplus^pV_{min}^{3,2;+}\big)\oplus\big(\oplus^qV_{min}^{3,2;-}\big)
$$
leads to the conclusion that 
$\Aut^0(\n_{3,2}(U))=\Orth(2p,2q;\R)$. 

\vskip0.5cm

\begin{center}
\scalebox{0.7}
{
\begin{tabular}{|c|c|c|c|}
\hline
$V^{2,3}_{min}$ & \multicolumn{2}{c|}{} & $\dim=8$ 
\\ 
\hline
$E_{P_1}^{\pm}$ &     +  $E^*_{2,3}$    &     -      & $\dim=4$ 
\\
\hline
Basis for $E^*_{2,3}$ & $x_1=v$  &   $\ldots$ & $\begin{array}{ll}&P_1=J_1J_2J_4J_5\\&P_2=J_1J_3J_5\end{array}$
\\
& $x_2=\ii v$ &  $\ldots$ & $\ii=J_4J_5$
\\
& \textcolor{red}{$x_3=\QQ v$} & $\ldots$ & $\QQ=J_1J_4$
\\
& \textcolor{red}{$x_4=\ii \QQ v$} &   $\ldots$  & $\PP=J_1J_3J_5$
\\
\hline
$G_I$& & $J_1$ & \\
\hline
\end{tabular}
}
\end{center}

\vskip0.5cm

Arguing as in the case $V^{3,2}_{min}$ and by making use the basis~\eqref{eq:basis22} we come to condition
$$
A_+^T
\begin{pmatrix}
1&0
\\
0&-1
\end{pmatrix}
A_+=
\begin{pmatrix}
1&0
\\
0&-1
\end{pmatrix},\quad\text{for}\quad V_{min}^{2,3;N}
$$
For a general module $U=\oplus^pV_{min}^{2,3;N}$
we conclude that 
$\Aut^0(\n_{2,3}(U))=\Orth(p,p;\R)$. 

\vskip0.5cm

\begin{center}
\scalebox{0.7}{
\begin{tabular}{|c|c|c|}
\hline
$\n_{2,1}$ & & $\dim=8$ 
\\
\hline
Basis & $x_1=v$ & 
\\
& $x_2=\ii v$ & 
\\
& $\textcolor{red}{x_3=\QQ v}$ & 
\\
& $\textcolor{red}{x_4=\ii \QQ v}$& $\ii=J_1J_2$ 
\\
& $\textcolor{red}{x_5=J_1J_2J_3v}$ & $\QQ=J_2J_3$
\\
& $\textcolor{red}{x_6=\ii J_1J_2J_3v}$ & $\PP=J_1J_2J_3$ 
\\
& $x_7=\QQ J_1J_2J_3v$ & $$ 
\\
& $x_8=\ii \QQ J_1J_2J_3v$ & 
\\
\hline
\end{tabular}
}
\end{center}
\vskip0.5cm

We use the basis for:
\begin{equation}\label{eq:basis21}
\begin{array}{lllllllll}
&y_1=\frac{x_1+x_3}{2},\quad
&y_2=\frac{x_2-x_4}{2},\quad
&y_3=\frac{x_5+x_7}{2},\quad
&y_4=\frac{x_6-x_8}{2},\quad
\\
&y_5=\frac{x_1-x_3}{2},\quad
&y_6=\frac{x_2+x_4}{2},
&y_7=\frac{x_5-x_7}{2},\quad
&y_8=\frac{x_6+x_8}{2},
\end{array}
\end{equation}
for $V^{2,1;N}_{min}=N_+\oplus N_-$ with
$
N_+=\spn\{y_1,y_2,y_3,y_4\}$, $N_-=\spn\{y_5,y_6,y_7,y_8\}$.
We write $A=A_+\oplus A_-$ with $A_+\in\GL(4;\R)$ and $A_-=J_1J_2A_+J_2J_1$ in the basis~\eqref{eq:basis21}. Then
$$
\eta\PP=
\begin{pmatrix}
0&S
\\
S&0
\end{pmatrix}\quad\text{with}\quad
S=-\begin{pmatrix}
0&\Id_2
\\
\Id_2&0\end{pmatrix}.
$$
Thus we need to check the condition 
$$
A^T_+SA_-=S\quad\Longleftrightarrow\quad A^T_+SJ_1J_2A_+=SJ_1J_2
\quad\text{with}\quad 
SJ_1J_2=\begin{pmatrix}
0&0&0&1
\\
0&0&-1&0
\\
0&1&0&0
\\
-1&0&0&0
\end{pmatrix}.
$$
Finally, we conclude that 
$\Aut^0(\n_{2,1}(U))=\Sp(4p;\R)$.


\subsection{Modules over $\mathbb{H}$ caring a negative involution}


\subsubsection{Cases $\n_{1,3}(U)$, $\n_{3,1}(U)$, $\n_{1,5}(U)$, $\n_{5,1}(U)$, $\n_{7,3}(U)$, $\n_{3,7}(U)$,}


\begin{center}
\scalebox{0.7}{
\begin{tabular}{|c|c|c|}
\hline
$\n_{1,3}$ & & $\dim=8$ \\
\hline
Basis & $x_1=v$ & 
\\
& $x_2=\ii v$ & $P=J_1J_2J_3$
\\
& $x_3=\jj v$ & 
\\
& $x_4=\kk v$ & $\ii=J_2J_3$ 
\\
& $\textcolor{red}{x_5=J_4v}$ &  $\jj=J_3J_4$
\\
& $\textcolor{red}{x_6=\ii J_4v}$ &$\kk=J_2J_4$  
\\
& \textcolor{red}{$x_7=\jj J_4v$} &  $\QQ=J_1J_2$
\\
& \textcolor{red}{$x_8=\kk J_4v$} & $\PP=J_1$
\\
\hline
\end{tabular}
}
\end{center}

\vskip0.5cm

We choose $P_1 v=v$ and the basis for $V^{1,3;N}_{min}=N_+\oplus N_-$: 
\begin{equation}\label{eq:basis13}
\begin{split}
&N_+=\spn\{y_1=\frac{x_1+x_7}{2},\
y_2=\frac{x_3-x_5}{2},\
y_3=\frac{x_4+x_6}{2},\
y_4=\frac{x_2-x_8}{2}\},
\\
&N_-=\spn\{y_5=\frac{x_1-x_7}{2},\
y_6=\frac{x_3+x_5}{2},\
y_7=\frac{x_4-x_6}{2},\
y_8=\frac{x_2+x_8}{2}\}.
\end{split}
\end{equation}
Since $A\QQ=\QQ A$ and $\QQ\ii=-\ii \QQ$ we write $A=A_+\oplus A_-$, where $A_+\in\GL(4;\R)$ and $
A_-=J_2J_3 A_+ J_3J_2$.
Since 
$\QQ\jj=\jj \QQ$ we deduce that $A_+\in\GL(2;\CC)$. Moreover
\begin{equation}\label{eq:basisu}
N_+=\spn\{y_1,\quad
y_2=\jj y_1,\quad
y_3= y_3,\quad
y_4=\jj y_3\}.
\end{equation}

We also have $\eta \PP \QQ=\QQ\eta \PP$,  $\eta \PP\jj=\jj\eta\PP$ with the matrix $\eta \PP\vert_{N_+}=\begin{pmatrix}
0&\mathbf{i}\\\mathbf{i}&0
\end{pmatrix}$. It leads to
$$
A_+^T \eta \PP A_+=\eta \PP 
\quad\Longleftrightarrow\quad
\overline{(A_+)_{\CC}^T}\begin{pmatrix}
0&1\\1&0
\end{pmatrix} (A_+)_{\CC}=\begin{pmatrix}
0&1\\1&0
\end{pmatrix}.
$$
The matrix $\begin{pmatrix}
0&1\\1&0
\end{pmatrix}$ is Hermitian of index $(1,1)$. We conclude that $\Aut^0(\n_{1,3}(V^{1,3;N}_{min}))\cong \U(1,1)$ and $\Aut^0(\n_{1,3}(U)\cong \U(p,p)$ for $U=\oplus^p V^{1,3;N}_{min}$.

\vskip0.5cm

\begin{center}
\scalebox{0.7}{
\begin{tabular}{|c|c|c|}
\hline
$\n_{3,1}$ & & $\dim=8$ 
\\
\hline
Basis & $x_1=v$ & 
\\
& $x_2=\ii v$ & $P_1=J_1J_2J_3$
\\
& $x_3=\jj v$ & \\
& $x_4=\kk v$ & $\ii=J_2J_3$
\\
& $\textcolor{red}{x_5=J_4v}$ & $\jj=J_1J_2$
\\
& $\textcolor{red}{x_6=\ii J_4v}$ & $\kk=J_1J_3$
\\
& $\textcolor{red}{x_7=\jj J_4v}$ &$\QQ=J_3J_4$ 
\\
& $\textcolor{red}{x_8=\kk J_4v}$ & $\PP=J_4$
\\
\hline
\end{tabular}
}
\end{center}

\vskip0.5cm

We have $V^{3,1;+}_{min}=E_{P_1}^+ \oplus E_{P_1}^-$, with 
$$
E_{P_1}^+ = \text{span}\lbrace v, \ii v,\jj v, \kk v \rbrace,\quad 
E_{P_1}^- = \text{span} \lbrace J_4v, \ii J_4v,\jj J_4v, \kk J_4v \rbrace.
$$ 
The negative involution $\QQ$ decomposes $
V^{3,1;+}_{min}=N_+\oplus N_-$ with the basis given by~\eqref{eq:basis13}.
Since $A\QQ=\QQ A$ and $\QQ\ii=-\ii \QQ$ we write $A=A_+\oplus A_-$, where $A_+\in\GL(4;\R)$ and $
A_-=-\ii A_+\ii$.
The condition
$\QQ\jj=\jj \QQ$ implies $A_+\in\GL(2;\CC)$. 
We also have $\eta \PP \QQ=\QQ\eta \PP$ and $\eta \PP \jj=\jj\eta \PP$ with $\eta \PP\vert_{N_+}=\diag_2\begin{pmatrix}
0&-1\\1&0
\end{pmatrix}$ in the basis~\eqref{eq:basisu}. It leads to
$$
A_+^T \eta \PP A_+=\eta \PP 
\quad\Longleftrightarrow\quad
\overline{(A_+)_{\CC}^T} (A_+)_{\CC}=\Id_2.
$$
The conclusion is that $\Aut^0(\n_{3,1}(V^{3,1;+}_{min}))\cong \U(2)$ and $\Aut^0(\n_{3,1}(U)\cong \U(2p,2q)$ for $U=(\oplus^p V^{3,1;+}_{min})\oplus (\oplus^q V^{3,1;-}_{min})$.

\vskip0.5cm

\begin{center}
\scalebox{0.7}{
\begin{tabular}{|c|c|c|c|}
\hline
$V^{5,1}_{min}$ & \multicolumn{2}{c|}{} & $\dim=16$ \\ \hline
$E_{P_1}^{\pm}$ &     +  $E^*_{5,1}$    &     -      & $\dim=8$ \\
\hline
Basis for $E^*_{5,1}$ & $x_1=v$ & $\ldots$ &  $P_1=J_1J_2J_3J_4$
\\
 & $x_2=\ii v$ &   $\ldots$ &  $P_2=J_1J_2J_5$
 \\
& $x_3=\jj v$ & $\ldots$  & 
\\
&  $x_4=\kk v$ & $\ldots$ & $\ii=J_1J_3$
\\
& \textcolor{red}{$x_5=J_6v$} & $\ldots$  & $\jj=J_1J_2$ 
\\
& \textcolor{red}{$x_6=\ii J_6v$} & $\ldots$ & $\kk=J_3J_2$
\\
& \textcolor{red}{$x_7=\jj J_6v$} &  $\ldots$  &  $\QQ=J_5J_6$
\\
& \textcolor{red}{$x_8=\kk J_6v$} &  $\ldots$ &  $\PP=J_6$ 
\\
\hline
$G_I$& & $ J_1$ & \\
\hline
\end{tabular}
}
\end{center}

\vskip0.5cm

We have $E_{5,1}^*=E_{P_2}^+ \oplus E_{P_2}^-$, with 
$$
E_{P_2}^+ = \text{span}\lbrace v, \jj v,\ii J_6v, \kk J_6v \rbrace,\quad
E_{P_2}^- = \text{span} \lbrace J_6v, \ii v,\jj J_6v, \kk v,  \rbrace.
$$
The negative involution $\QQ$ decomposes $E_{5,1}^*$ into two eigenspaces 
$
E_{5,1}^*=N_+\oplus N_-$
with the basis
\begin{equation}\label{eq:basis51}
\begin{split}
&N_+=\spn\{y_1=\frac{x_1+x_7}{2},\
y_2=\frac{x_2+x_8}{2},\
y_3=\frac{x_3-x_5}{2},\
y_4=\frac{x_4-x_6}{2}\},
\\
&N_-=\spn\{y_5=\frac{x_1-x_7}{2},\
y_6=\frac{x_2-x_8}{2},\
y_7=\frac{x_3-x_5}{2},\
y_8=\frac{x_4+x_6}{2}\}.
\end{split}
\end{equation}
Since $A\QQ=\QQ A$ and $\QQ\ii=\ii \QQ$, $\QQ\jj=\jj \QQ$ we write $A=A_+\oplus A_-$, where $A_{\pm}\in\GL(1;\HH)$. Moreover
\begin{equation}\label{eq:basis51+}
N_+=\spn\{y_1,\quad
y_2=\ii y_1,\quad
y_3= y_3,\quad
y_4=\ii y_3\}.
\end{equation}
We also have $\eta \PP \QQ=\QQ\eta \PP$,  $\eta \PP\ii=\ii\eta \PP$, $\eta \PP\jj=\jj\eta \PP$. Thus $\eta \PP$ is quaternion linear and $\eta \PP \vert_{N_{\pm}}={\mathbf j}$, written in the basis~\eqref{eq:basis51+}. It leads to
$$
A_{\pm}^T \eta \PP A_{\pm}=\eta \PP \vert_{N_+}
\quad\Longleftrightarrow\quad
\overline{(A_{\pm})_{\HH}^T} {\mathbf j}(A_{\pm})_{\HH}={\mathbf j}.
$$
The conclusion is that $\Aut^0(\n_{5,1}(V^{5,1;N}_{min}))\cong \Orth^*(2)\times \Orth^*(2)$ and $\Aut^0(\n_{5,1}(U)\cong \Orth^*(2p)\times \Orth^*(2p)$ for $U=\oplus^p V^{5,1;N}_{min}$.

\vskip0.5cm

\begin{center}
\scalebox{0.7}{
\begin{tabular}{|c|c|c|c|}
\hline
$V^{1,5}_{min}$ & \multicolumn{2}{c|}{} & $\dim=16$ \\ \hline
$E_{P_1}^{\pm}$ &     +  $E^*_{1,5}$    &     -      & $\dim=8$ \\
\hline
Basis for $E^*_{1,5}$ & $x_1=v$ &  $\ldots$ &$P_1=J_2J_3J_4J_5$ 
\\
& $x_2=\ii v$ &  $\ldots$ & $P_2=J_1J_2J_3$ 
\\
& $x_3=\jj v$  & $\ldots$ & 
\\
& $x_4=\kk v$ & $\ldots$ &$\ii=J_3J_4$ 
\\
& \textcolor{red}{$x_5=J_6v$} & $\ldots$  & $\jj=J_2J_3$
\\
& \textcolor{red}{$x_6=\ii J_6v$} &   $\ldots$ &$\kk=J_4J_2$ 
\\
& \textcolor{red}{$x_7=\jj J_6v$}  &   $\ldots$  & $\QQ=J_1J_6$
\\
& \textcolor{red}{$x_8=\kk J_6v$} & $\ldots$ & $\PP=J_6$
\\
\hline
$G_I$& & $J_5$ & \\
\hline 
\end{tabular}
}
\end{center}

\vskip0.5cm

With the chosen operators $\ii,\jj, \QQ, \PP$ the calculations are identical to the case of $\n_{5,1}$ and we conclude that $\Aut^0(\n_{1,5}(U)\cong \Orth^*(2p)\times \Orth^*(2p)$ for $U=\oplus^p V^{1,5;N}_{min}$.

\vskip0.5cm

\begin{center}
\centering
\scalebox{0.7}{
\begin{tabular}{|c|c|c|c|c|c|c|c|c|c|}
\hline
$V^{7,3}_{min}$ & \multicolumn{8}{c|}{}                                                                         & $\dim=64$ 
\\ 
\hline
$E_{P_1}^{\pm}$ & \multicolumn{4}{c|}{+}                         & \multicolumn{4}{c|}{-}                         & $\dim=32$ 
\\ 
\hline
$E_{P_2}^{\pm}$ & \multicolumn{2}{c|}{+} & \multicolumn{2}{c|}{-} & \multicolumn{2}{c|}{+} & \multicolumn{2}{c|}{-} & $\dim=16$ 
\\ 
\hline
$E_{P_3}^{\pm}$ &       +  $E^*_{7,3}$  &     -      &          + &      -     &        +   &     -      &          + &    -       & $\dim=8$ \\ \hline
Basis for $E^*_{7,3}$& $x_1=v$ & $\ldots$& $\ldots$ & $\ldots$& $\ldots$ & $\ldots$ & $\ldots$& $\ldots$& $P_1=J_1J_2J_4J_5$
\\
& $x_2=\ii v$ & $\ldots$& $\ldots$& $\ldots$ & $\ldots$ & $\ldots$ &$\ldots$ & $\ldots$ & $P_2=J_1J_2J_6J_7$
\\
&  \textcolor{red}{$x_3=\jj v$} & $\ldots$ & $\ldots$ & $\ldots$ & $\ldots$ &$\ldots$ & $\ldots$ & $\ldots$ & $P_3=J_1J_2J_8J_9$
\\
&  \textcolor{red}{$x_4=\kk v$} & $\ldots$& $\ldots$& $\ldots$ & $\ldots$ &  $\ldots$ & $\ldots$ & $\ldots$ & $P_4=J_1J_2J_3$ 
\\
 & \textcolor{red}{$x_5=J_{10}v$} & $\ldots$ & $\ldots$ & $\ldots$ & $\ldots$ & $\ldots$ &$\ldots$& $\ldots$ & $\ii=J_1J_2$ 
 \\
& \textcolor{red}{$x_6=\ii J_{10}v$} & $\ldots$ & $\ldots$ & $\ldots$ & $\ldots$ & $\ldots$ & $\ldots$& $\ldots$ & $\jj=J_1J_4J_6J_8$  
\\
& $x_7=\jj J_{10}v$ & $\ldots$ & $\ldots$ & $\ldots$ & $\ldots$ & $\ldots$ & $\ldots$& $\ldots$ &  $\kk=J_2J_4J_6J_8$
\\
& $x_8=\kk J_{10}v$ & $\ldots$ & $\ldots$ & $\ldots$ &  $\ldots$ &  $\ldots$ &$\ldots$& $\ldots$ & $\begin{array}{ll}&\QQ=J_3J_{10}\\&\PP=J_{10}\end{array}$
\\
\hline
$G_I$ & & $J_8$ & $J_6$ & $J_1J_4$ & $J_4$ & $J_1J_6$ & $J_1J_8$ & $ J_1$ & \\
\hline
\end{tabular}
}
\end{center}

\vskip0.5cm

Observe that $E_{7,3}^*=E_{P_4}^+ \oplus E_{P_4}^-$, with 
\[
E_{P_4}^+ = \text{span}\lbrace v, \ii v,\jj J_{10}v, \kk J_{10}v \rbrace,
\quad
E_{P_4}^- = \text{span} \lbrace J_{10}v, \ii J_{10}v, \jj v,\kk v \rbrace.
\]

We start from the minimal admissible module. The negative involution $\QQ$ decomposes $E_{7,3}^*$ into two eigenspaces 
$
E_{7,3}^*=N_+\oplus N_-$
with the bases
\begin{equation}\label{eq:basis73}
\begin{split}
&N_+=\spn\{y_1=\frac{x_1+x_6}{2},\
y_2=\frac{x_2-x_5}{2},\
y_3=\frac{x_4+x_7}{2},\
y_4=\frac{x_8-x_3}{2}\},
\\
&N_-=\spn\{y_5=\frac{x_1-x_6}{2},\
y_6=\frac{x_2+x_5}{2},\
y_7=\frac{x_7-x_4}{2},\
y_8=\frac{x_8+x_3}{2}\}.
\end{split}
\end{equation} 
Since $A\QQ=\QQ A$ and $\QQ\ii=\ii \QQ$, $\QQ\jj=\jj \QQ$ we write $A=A_+\oplus A_-$, where $A_{\pm}\in\GL(1;\HH)$. 
Moreover
\begin{equation}\label{eq:basis73+}
N_+=\spn\{y_1,\quad
y_2=\ii y_1,\quad
y_3= y_3,\quad
y_4=\ii y_3\}.
\end{equation} 
We also have $\eta \PP \QQ=\QQ\eta \PP$,  $\eta \PP\ii=\ii\eta \PP$, $\eta \PP\jj=-\jj\eta \PP$ with $\big(\eta \PP \vert_{N_{\pm}}\big)_{\CC}={\mathbf i}\Id_2$, written in the basis~\eqref{eq:basis73+}. It leads to
$$
A_{\pm}^T \eta \PP A_{\pm}=\eta \PP \vert_{N_+}
\quad\Longleftrightarrow\quad
\overline{(A_{\pm})_{\CC}^T} \Id_2(A_{\pm})_{\CC}=\Id_2.
$$

Thus we conclude $\Aut^0(\n_{7,3}(V^{7,3;+}_{min})\cong \Sp(1)\times \Sp(1)$ for a minimal admissible module. If $U=(\oplus^p V^{7,3;+}_{min})\oplus (\oplus^q V^{7,3;-}_{min})$, then $\Aut^0(\n_{7,3}(U)\cong \Sp(p,q)\times \Sp(p,q)$.
\\

The calculations and the table for $\n_{3,7}(U)$ are identical to $\n_{7,3}(U)$ and we conclude that 
$\Aut^0(\n_{3,7}(U)\cong \Sp(p,q)\times \Sp(p,q)$  for $U=(\oplus^p V^{3,7;+}_{min})\oplus (\oplus^q V^{3,7;-}_{min})$.


\section{Appendix}\label{App}



\subsection{Comparison of $\Aut^0(\n_{r,s}(U))$ for isomorphic algebras}\label{comparison}


{\bf Cases $\n_{1,0}(U)$, $\n_{0,1}(V)$; $\n_{2,0}(U)$, $\n_{0,2}(V)$; $\n_{5,1}(U)$, $\n_{1,5}(V)$}. 
$$
\n_{1,0}(\oplus^p V^{1,0;+}_{min})\cong \n_{0,1}(\oplus^p V^{0,1;N}_{min}),\quad
\n_{2,0}(\oplus^p V^{2,0;+}_{min})\cong \n_{0,2}(\oplus^p V^{0,2;N}_{min})
$$
$$
\n_{5,1}(\oplus^p V^{5,1;N}_{min})\cong \n_{1,5}(\oplus^p V^{1,5;N}_{min})
$$
$$
\Aut^0(\n_{1,0}(U))=\Aut^0(\n_{0,1}(V))\cong\Sp(2p,\mathbb R),\quad
\Aut^0(\n_{2,0}(U))=\Aut^0(\n_{0,2}(V))=\Sp(2p,\CC),
$$
$$
\Aut^0(\n_{1,5}(U))=\Aut^0(\n_{5,1}(V))=\Orth^*(2p)\times \Orth^*(2p).
$$
\\

{\bf Cases $\n_{4,0}(V)$, $\n_{0,4}(U)$; $\n_{2,6}(U)$, $\n_{6,2}(V)$; $\n_{8,0}(U)$, $\n_{0,8}(V)$, $\n_{4,4}(W)$; $\n_{1,6}(U)$, $\n_{6,1}(U)$; $\n_{2,5}(U)$, $\n_{5,2}(V)$}. 
$$
\n_{4,0}(\oplus^p V^{4,0;+}_{min})\cong \n_{0,4}(\oplus^p V^{0,4;N}_{min}),\qquad
\n_{2,6}(\oplus^p V^{2,6;N}_{min})\cong \n_{6,2}(\oplus^p V^{6,2;N}_{min})
$$
$$
\n_{1,6}(\oplus^p V^{1,6;N}_{min})\cong \n_{6,1}(\oplus^p V^{6,1;N}_{min}),\qquad
\n_{2,5}(\oplus^p V^{2,5;N}_{min})\cong \n_{5,2}(\oplus^p V^{5,2;N}_{min})
$$
$$
\n_{8,0}(\oplus^p V^{8,0;+}_{min})\cong \n_{0,8}(\oplus^p V^{0,8;+}_{min})\not\cong
\n_{4,4}(\oplus^p V^{4,4;+}_{min})
$$
$$
\Aut^0(\n_{4,0}(V))=\Aut^0(\n_{0,4}(U))=\Aut^0(\n_{2,6}(U))=\Aut^0(\n_{6,2}(V)=\GL(p,\HH);
$$
$$
\Aut^0(\n_{1,6}(U))=\Aut^0(\n_{6,1}(V))=\Aut^0(\n_{2,5}(U))=\Aut^0(\n_{5,2}(V))=\Orth^*(2p).
$$
$$
\Aut^0\big(\n_{8,0}(U)\big)=\Aut^0\big(\n_{0,8}(V)\big)=\Aut^0\big(\n_{4,4}(W)\big)=\GL(p,\mathbb R).
$$ 
\\

{\bf Cases $\n_{5,0}(U)$, $\n_{0,5}(V)$; $\n_{1,4}(U)$, $\n_{4,1}(V)$; $\n_{6,0}(U)$, $\n_{0,6}(U)$; $\n_{2,4}(U)$, $\n_{4,2}(U)$; $\n_{1,2}(U)$, $\n_{2,1}(U)$} Here we have that 
$$
\n_{5,0}(\oplus^{2p}V^{5,0;+}_{min})\cong \n_{0,5}(\oplus^{p}V^{0,5;N}_{min}),\quad
\n_{1,4}(\oplus^{2p}V^{1,4;+}_{min})\cong \n_{4,1}(\oplus^{p}V^{4,1;N}_{min})
$$
$$
\n_{6,0}(\oplus^{2p} V_{min}^{6,0;+})\cong \n_{0,6}(\oplus^p V_{min}^{0,6;N}),
\quad
\n_{2,4}(\oplus^{2p} V_{min}^{2,4;+})\cong \n_{4,2}(\oplus^p V_{min}^{4,2;N}),
$$
$$
\n_{1,2}(\oplus^{2p} V_{min}^{1,2;N})\cong \n_{2,1}(\oplus^p V_{min}^{2,1;N}),
$$
We also showed
$$
\Aut^0\big(\n_{5,0}(\oplus^p V_{min}^{5,0;+})\big)\cong\Orth^*(2p)
\quad
\text{and}
\quad
\Aut^0(\n_{0,5}(\oplus^p V_{min}^{0,5;N}))\cong\Orth^*(4p).
$$
$$
\Aut^0\big(\n_{1,4}(\oplus^p V_{min}^{1,4;+})\big)\cong\Orth^*(2p)
\quad
\text{and}
\quad
\Aut^0(\n_{4,1}(\oplus^p V_{min}^{4,1;N}))\cong\Orth^*(4p).
$$
$$
\Aut^0\big(\n_{6,0}(\oplus^p V_{min}^{6,0;+})\big)\cong\Orth(p;\CC)
\quad
\text{and}
\quad
\Aut^0\big(\n_{0,6}(\oplus^p V_{min}^{0,6;N})\big)\cong\Orth(2p;\CC).
$$
$$
\Aut^0\big(\n_{2,4}(\oplus^p V_{min}^{2,4;+})\big)\cong\Orth(p;\CC)
\quad
\text{and}
\quad
\Aut^0\big(\n_{4,2}(\oplus^p V_{min}^{4,2;N})\big)\cong\Orth(2p;\CC).
$$
$$
\Aut^0(\n_{1,2}(\oplus^p V_{min}^{1,2;N}))\cong\Sp(2p;\CC)
\quad
\text{and}
\quad
\Aut^0\big(\n_{2,1}(\oplus^p V_{min}^{2,1;N})\big)\cong\Sp(4p;\R).
$$
\\

{\sc Cases $\n_{3,0}(V)$, $\n_{0,3}(U)$; $\n_{7,0}(U)$, $\n_{0,7}(V)$; $\n_{3,4}(U)$, $\n_{4,3}(V)$.} 
$$
\n_{0,3}(V^{0,3;N}_{min})\cong \n_{3,0}\Big(\big(\oplus^{p}V^{3,0;+}_{min;+}\big)\oplus\big(\oplus^p V^{3,0;-}_{min;+}\big)\Big),
$$ 
$$
\n_{0,7}(\oplus^{p} V^{0,7;N}_{min})\cong\n_{7,0}\Big(\big(\oplus^{p}V^{7,0;+}_{min;+}\big)\oplus\big(\oplus^p V^{7,0;-}_{min;+}\big)\Big),
$$
$$
\n_{4,3}(\oplus^{p} V^{4,3;N}_{min})\cong\n_{3,4}\Big(\big(\oplus^{p}V^{3,4;+}_{min;+}\big)\oplus\big(\oplus^p V^{3,4;-}_{min;+}\big)\Big),
$$
$$
\Aut^0(\n_{0,3}\big(\oplus^{p} V^{0,3;N}_{min}\big)=\Sp(p,p),
\quad
\Aut^0(\n_{3,0}\Big(\big(\oplus^{p}V^{3,0;+}_{min;+}\big)\oplus\big(\oplus^q V^{3,0;-}_{min;+}\big)\Big)=\Sp(p,q),
$$ 
$$
\Aut^0(\n_{0,7}\big(\oplus^{p} V^{0,7;N}_{min}\big)\cong\Aut^0(\n_{4,3}\big(\oplus^{p} V^{4,3;N}_{min}\big)\cong\Orth(p,p),
$$ 
$$
\Aut^0(\n_{7,0}\Big(\big(\oplus^{p}V^{7,0;+}_{min;+}\big)\oplus\big(\oplus^q V^{7,0;-}_{min;+}\big)\Big)\cong\Aut^0(\n_{3,4}\Big(\big(\oplus^{p}V^{3,4;+}_{min;+}\big)\oplus\big(\oplus^q V^{3,4;-}_{min;+}\big)\Big)\cong\Orth(p,q).
$$ 
\\

{\sc Cases $\n_{1,7}(U)$, $\n_{7,1}(V)$; $\n_{5,3}(U)$, $\n_{3,5}(V)$; $\n_{2,7}(U)$, $\n_{7,2}(V)$; $\n_{6,3}(U)$, $\n_{3,6}(V)$.} 

$$\n_{1,7}(\oplus^{p} V^{1,7;N}_{min})\cong\n_{7,1}\big((\oplus^pV^{7,1;+}_{\min})\oplus(\oplus^pV^{7,1;-}_{\min})\big).
$$
$$\n_{5,3}(\oplus^{p} V^{5,3;N}_{min})\cong\n_{3,5}\big((\oplus^pV^{3,5;+}_{\min})\oplus(\oplus^pV^{3,5;-}_{\min})\big).
$$
$$\n_{2,7}(\oplus^{p} V^{2,7;N}_{min})\cong\n_{7,2}\big((\oplus^pV^{7,2;+}_{\min})\oplus(\oplus^pV^{7,2;-}_{\min})\big),
$$
$$\n_{6,3}(\oplus^{p} V^{6,3;N}_{min})\cong\n_{3,6}\big((\oplus^pV^{3,6;+}_{\min})\oplus(\oplus^pV^{3,6;-}_{\min})\big),
$$
$$
\Aut^0\big(\n_{5,3}(\oplus^{p} V^{5,3;N}_{min})\big)\cong\Aut^0\big(\n_{1,7}(\oplus^{p} V^{1,7;N}_{min})\big)\cong\U(p,p)
$$
$$
\Aut^0(\n_{7,1}\big(\oplus^pV^{7,1;+}_{\min})\oplus(\oplus^qV^{7,1;-}_{\min})\big)\cong
\Aut^0(\n_{3,5}\big(\oplus^pV^{3,5;+}_{\min})\oplus(\oplus^qV^{3,5;-}_{\min})\big)\cong\U(p,q).
$$
$$
\Aut^0\big(\n_{2,7}(\oplus^{p} V^{2,7;N}_{min})\big)\cong\Aut^0\big(\n_{6,3}(\oplus^{p} V^{6,3;N}_{min})\big)\cong\Sp(p,p),
$$
$$
\Aut^0(\n_{7,2}\big(\oplus^pV^{7,2;+}_{\min})\oplus(\oplus^qV^{7,2;-}_{\min})\big)\cong
\Aut^0(\n_{3,6}\big(\oplus^pV^{3,6;+}_{\min})\oplus(\oplus^qV^{3,6;-}_{\min})\big)\cong\Sp(p,q).
$$
\\

{\bf Cases $\n_{1,3}(U)$, $\n_{3,1}(V)$; $\n_{2,3}(U)$, $\n_{3,2}(V)$; $\n_{3,7}(U)$, $\n_{7,3}(V)$} In all these cases the pairs of the Lie algebras are not isomorphic for any choice of admissible modules. We have 
$$
\Aut^0(\n_{1,3}(U))=\U(p,p),\quad \Aut^0(\n_{3,1}(V))=\U(2p,2q);
$$
$$
\Aut^0(\n_{2,3}(U))\cong \Orth(p,p;\R),\quad
\Aut^0(\n_{3,2}(V))=\Orth(2p,2q;\R);
$$
$$
\Aut^0(\n_{3,7}(U))\cong \Aut^0(\n_{7,3}(V))\cong \Sp(p,q)\times \Sp(p,q).
$$


\subsection{Some isomorphisms} 

In the work~\cite[Theorem 11]{FM1} it was shown the existence of an isomorphism $\n_{1,7}(U^{1,7;N}_{\min})\cong \n_{7,1}(V_{min}^{7,1;\pm}\oplus V_{min}^{7,1;\pm})$. The proof was not constructive and did not show how the metric changes under the isomorphism. Therefore we propose here the constructive proof of $\n_{1,7}(U^{1,7;N}_{\min})\cong \n_{7,1}(V_{min}^{7,1;+}\oplus V_{min}^{7,1;-})$. We will construct the isomorphism only for minimal dimensional module. 
Thus we choose the basis $(z_1,\ldots,z_8)$
$$
\la z_k,z_k\ra_{1,7}=-1,\quad k=1,\ldots,7,\quad \la z_8,z_8\ra_{1,7}=1\quad\text{for}\quad
\mathbb R^{1,7}.
$$
$$
y_{1}=u,\quad y_{2}=\ii y_{l1},\quad y_{3}=J_{z_1}J_{z_2}J_{z_7}y_1,\quad y_{4}=\ii y_3=J_{z_8}y_{1}\quad\text{for}\quad
E^{1,7}\subset U^{1,7;N}_{\min}
$$
with $\la u,u\ra_{E^{1,7}}=1$ and the complex structure $\ii=J_{z_1}J_{z_2}J_{z_7}J_{z_8}$. 
We also choose the basis $(w_1,\ldots,w_8)$
$$
\la w_k,w_k\ra_{7,1}=1,\quad k=1,\ldots,7,\quad \la w_8,w_8\ra_{7,1}=-1\quad\text{for}\quad
\mathbb R^{7,1}.
$$
$$
x_{1}=v_1,\quad x_{2}=\tilde\ii x_{1}=-J_{w_8}v_{1},\quad x_{3}=J_{w_1}J_{w_2}J_{w_7}v_2,\quad x_{4}=\tilde\ii x_3=-J_{w_8}v_{2}
$$
for $E^{7,1;+}\oplus E^{7,1;-}\subset V_{min}^{7,1;+}\oplus V_{min}^{7,1;-}
$
with $\la v_1,v_1\ra_{E^{7,1;+1}_{J_{w_1}J_{w_2}J_{w_7}}}=-\la v_2,v_2\ra_{E^{7,1;+1}_{J_{w_1}J_{w_2}J_{w_7}}}=1$ and the complex structure $\tilde \ii=J_{w_1}J_{w_2}J_{w_7}J_{w_8}$.
According to~\cite[Corollary 5, Theorem 3]{FM1} we define $C\colon \mathbb R^{1,7}\to \mathbb R^{7,1}$ by $C(z_k)=w_k$ and $C^{\tau}(w_k)=-z_k$, $k=1,\ldots,8$. The complex structure $\ii$ will correspond the complex structure $\tilde\ii$.

We define $A\colon E^{1,7;N}\to E^{7,1;+}\oplus E^{7,1;-}$ by setting 
$$
Ay_1=\sum_{m=1}^{4}a_mx_m,\quad Ay_3=\sum_{m=1}^{4}b_mx_m.
$$
Using the properties $A\ii=\tilde\ii A$ we deduce that 
$$A_{\CC}=
\begin{pmatrix}
\bar\lambda_1&\bar\mu_1
\\
\bar\lambda_2&\bar\mu_2
\end{pmatrix},\quad
\eta^{1,7} J_{z_8}=
\begin{pmatrix}
0&-i
\\
-i&0
\end{pmatrix},\quad
\eta^{7,1}J_{w_8}=
\begin{pmatrix}
-i&0
\\
0&i
\end{pmatrix}.
$$
We need to check the condition 
$$
A^T\eta^{7,1}J_{w_8}A=-\eta^{1,7}J_{z_8}
\quad\Longleftrightarrow\quad 
\bar A^T_{\CC}
\begin{pmatrix}
-1&0
\\
0&1
\end{pmatrix}
A_{\CC}=
\begin{pmatrix}
0&1
\\
1&0
\end{pmatrix}
$$
It leads to finding the solution of the system 
$$
\begin{cases}
-|\lambda_1|^2+|\lambda_2|^2=0
\\
-|\mu_1|^2+|\mu_2|^2=0
\\
-\lambda_1\bar\mu_1+\lambda_2\bar\mu_2=1.
\end{cases}
\quad\Longrightarrow\quad
\begin{cases}
-\lambda_1=\lambda_2=\frac{1}{2},
\\
\mu_1=\mu_2=1.
\end{cases}
$$
As we see the Lie algebras $\n_{1,7}(V^{1,7;N}_{\min})$ and $\n_{7,1}(V_{min}^{7,1;+}\oplus V_{min}^{7,1;-})$ are isomorphic.

The isomorphism is extended to any modules and the algebras $\n_{1,7}(\oplus^{p} V^{1,7;N}_{min})$ and $\n_{7,1}\big((\oplus^pV^{7,1;+}_{\min})\oplus(\oplus^pV^{7,1;-}_{\min})\big)$.
Analogously we can show 
$$\n_{2,7}(\oplus^{p} V^{2,7;N}_{min})\cong\n_{7,2}\big((\oplus^pV^{7,2;+}_{\min})\oplus(\oplus^pV^{7,2;-}_{\min})\big),
$$
$$\n_{l,3}(\oplus^{p} V^{l,3;N}_{min})\cong\n_{3,l}\big((\oplus^pV^{3,l;+}_{\min})\oplus(\oplus^pV^{3,l;-}_{\min})\big),\quad l=5,6,
$$
$$
\n_{0,l}(V^{0,l;N}_{min})\cong \n_{l,0}\Big(\big(\oplus^{p}V^{l,0;+}_{min;+}\big)\oplus\big(\oplus^p V^{,0;-}_{min;+}\big)\Big), \quad l=3,7,
$$ 
$$
\n_{4,3}(\oplus^{p} V^{4,3;N}_{min})\cong\n_{3,4}\Big(\big(\oplus^{p}V^{3,4;+}_{min;+}\big)\oplus\big(\oplus^p V^{3,4;-}_{min;+}\big)\Big).
$$


\end{document}